\theoremstyle{plain}
\newtheorem{thm}{Theorem}
\newtheorem{lem}[thm]{Lemma}
\newtheorem{cor}[thm]{Corollary}
\newtheorem{rem}[thm]{Remark}
\newtheorem{ass}[thm]{Assumption}
\numberwithin{thm}{section}
\numberwithin{equation}{section}
\newcommand{\ubar}[1]{\underaccent{\bar}{#1}}
\newcommand{\eps}{\varepsilon}
\newcommand{\gep}{\epsilon}
\newcommand{\E}{\mathbb{E}}
\newcommand{\R}{\mathbb{R}}
\newcommand{\T}{\mathbb{T}}
\newcommand{\N}{\mathbb{N}}
\newcommand{\mX}{\mathcal{X}}
\newcommand{\om}{\mathcal{Z}}
\newcommand{\sht}{\mathcal{Q_{\alpha,\tau}}}
\newcommand{\omat}{\om_{\alpha,\tau}}
\renewcommand{\t}{\bar{\tau}}
\newcommand{\norm}[1]{\left\|#1 \right\|}
\newcommand{\rp}{\mathbb{P}}
\newcommand{\ip}[2]{\big\langle#1,#2\big\rangle}
\newcommand{\at}{\alpha,\tau}
\newcommand{\pth}{\Theta_n}
\newcommand{\sign}{{\rm sign}}
\newcommand{\enab}{\bar{\gep}_n}
\newcommand{\minimax}{{m}^\ast_n}
\newcommand{\lminimax}{l^\ast_n}
\newcommand{\enabp}{\gep_{n,\alpha,\beta,p}}
\newcommand{\cona}{\kappa}
\newcommand{\conr}{r}
\newcommand{\tth}{\tilde{\Theta}_n}
\DeclareMathOperator*{\argmax}{arg\,max}
\begin{document}

\begin{frontmatter}
\title{Adaptive inference over Besov spaces in the white noise model using \lowercase{$p$}-exponential priors}
\runtitle{Adaptive inference with \lowercase{$p$}-exponential priors}

\begin{aug}
\author[]{\fnms{Sergios} \snm{Agapiou}\ead[label=e1,mark]{agapiou.sergios@ucy.ac.cy}}
\author[]{\fnms{Aimilia} \snm{Savva}\ead[label=e2,mark]{savva.emilia@ucy.ac.cy}}
\address[]{Department of Mathematics and Statistics, University of Cyprus, \\\printead{e1,e2}}
\end{aug}

\begin{abstract}
In many scientific applications the aim is to infer a function which is smooth in some areas, but rough or even discontinuous in other areas of its domain. Such \emph{spatially inhomogeneous} functions can be modelled in Besov spaces with suitable integrability parameters. In this work we study adaptive Bayesian inference over Besov spaces, in the white noise model from the point of view of rates of contraction, using \emph{$p$-exponential} priors, which range between Laplace and Gaussian and possess regularity and scaling hyper-parameters. To achieve adaptation, we employ empirical and hierarchical Bayes approaches for tuning these hyper-parameters. Our results show that, while {it is known that} Gaussian priors can attain the minimax rate \emph{only} in Besov spaces of spatially homogeneous functions, Laplace priors {lead to adaptive or nearly adaptive procedures} in \emph{both} Besov spaces of spatially homogeneous functions \emph{and} Besov spaces permitting spatial inhomogeneities.

\end{abstract}

\begin{keyword}[class=MSC2020]
\kwd[Primary ]{62G20}
\kwd{62G05}
\kwd[; secondary ]{60G50}
\end{keyword}

\begin{keyword}
\kwd{adaptation}
\kwd{Besov spaces}
\kwd{empirical Bayes}
\kwd{Gaussian prior}
\kwd{hierarchical Bayes}
\kwd{Laplace prior}
\kwd{posterior contraction rates}
\kwd{spatially inhomogeneous functions}
\kwd{white noise model}
\end{keyword}

\end{frontmatter}



\section{Introduction}\label{sec:intro}
A common goal in statistical practice is the inference of a functional unknown. This unknown can have uniform smoothness across its domain or it can be \emph{spatially inhomogeneous}, that is, smooth in some areas and rough in other areas of its domain. For example, in imaging applications typically there are different smooth objects separated by sharp edges, in geophysical applications the physical parameter of interest has jumps at the intersection of different media and in signal processing some signals are mostly idle and only exhibit activity in localized outbursts. In this work, we will study \emph{adaptive} Bayesian inference for both of these types of unknowns, in the \emph{white noise model} and under a class of priors termed \emph{$p$-exponential}, from the point of view of rates of posterior contraction in the small noise limit.

It is well known that both spatially homogeneous and spatially inhomogeneous functions can be modelled using Besov spaces, $B^\beta_{qq'}$ with smoothness $\beta>0$ and integrability indices $q\ge 2, 1\leq q'\leq \infty$ for the former and $1\leq q<2, 1\leq q'\leq \infty$ for the latter {(for a nice heuristic explanation why $q<2$ relates to spatial inhomogeneity we refer to \cite[Section 9.6]{IJ19})}. Function estimation for such unknowns in the white noise model has been studied extensively in the frequentist statistical literature, and minimax rates under Besov-type regularity in $L_2$-loss have been established by Donoho and Johnstone in \cite{DJ98}. A main finding in \cite{DJ98} is that, unlike the homogeneous case $q\geq2$ in which linear estimators can achieve the minimax rate (Sobolev-type spaces correspond to $q=2$), in the inhomogeneous case linear estimators only achieve a rate which is polynomially slower than minimax. On the other hand, there are a few (nonlinear) frequentist procedures which are (nearly) adaptive in the minimax sense in both cases, for example based on wavelet thresholding \cite{DJ94, DJ95} or locally variable bandwidth kernel estimates \cite{LMS97}. We also mention the Bayesian wavelet thresholding method in \cite{ASS98}.  

We will restrict our attention to Besov spaces with equal integrability indices $q=q'$, $q\geq 1$, which will enable us to simplify our analysis while still exhibiting the main statistical phenomena we are interested in. Rates of posterior contraction in $L_2$-loss under such Besov regularity have been studied in the (direct) white noise model in \cite{ADH21}, for $p$-exponential priors, which is a family of sequence priors with tails between Gaussian ($p=2$) and exponential ($p=1$). The obtained upper bounds suggested that in the spatially inhomogeneous case ($1\leq q<2$), Gaussian priors are limited by the "linear-minimax" rate; this agrees with the intuition that Gaussian process regression is a linear statistical procedure due to the linearity of the posterior mean in this setting. Indeed, in the recent preprint \cite{AW21} this was made rigorous by establishing the corresponding lower bounds for rates of contraction holding uniformly over Besov bodies and for sequences of \emph{arbitrary} Gaussian priors. On the contrary, Laplace priors with appropriately chosen regularity \emph{and} scaling parameters were shown in \cite{ADH21} to attain the minimax or nearly the minimax rate (for $q=1$ and $1<q<2$, respectively). More generally, it was shown that when $p\leq q<2$, appropriately tuned $p$-exponential priors attain the minimax rate (up to logarithmic terms when $p<q$). In the less intricate spatially homogeneous case ($q\geq 2$), it was shown that the minimax rate can be achieved using appropriately tuned $p$-exponential priors with any $p\in[1,2]$. 

The purpose of this work is to theoretically establish that the required tuning of $p$-exponential priors can be performed automatically. Empirical Bayes procedures based on the maximum marginal likelihood estimator (MMLE) as well as hierarchical Bayes procedures, for choosing either the scaling or the regularity hyper-parameter of a class of Gaussian process priors in the white noise model, have been studied from the point of view of contraction rates in \cite{SVZ13} and \cite{KSVZ16}, respectively. In those sources, it was shown that both the empirical and hierarchical procedures lead to partial adaptation over hyper-rectangles when selecting the scaling hyper-parameter (extended to Sobolev spaces in \cite{RS17}), while both procedures are fully adaptive over Sobolev spaces when selecting the regularity hyper-parameter (up to logarithmic terms, improved in \cite{RS17}; see also \cite{BE08,SVZ15}). Both contributions \cite{SVZ13} and \cite{KSVZ16}, relied on the fact that for Gaussian priors the marginal likelihood can be expressed explicitly.
On the other hand, a general theory for studying adaptivity with MMLE empirical Bayes and hierarchical Bayes procedures, which does not require the explicit availability of the marginal likelihood, has been developed in \cite{RS17}; see also \cite{DRRS18} concerning general empirical Bayes procedures. We employ this theory to study adaptation with $p$-exponential priors in the white noise model, since for $p<2$ the marginal likelihood is not explicit. 

We initially study adaptation over Sobolev spaces, by choosing either the scaling or the regularity hyper-parameter of the $p$-exponential prior via the MMLE empirical and hierarchical Bayes procedures. Our results are analogous to the Gaussian case: first, we show that when choosing the scaling hyper-parameter, the minimax rate is achieved provided the truth is not too smooth compared to the prior; second, we show that when choosing the regularity hyper-parameter, the minimax rate is effectively always achieved. These results trivially generalize to Besov spaces of spatially homogeneous functions ($q\geq2$) as well.

We then study adaptation over Besov spaces permitting spatial inhomogeneities, $B^\beta_{qq}$ with $1\leq q<2$, by choosing \emph{both} the scaling and regularity parameters \emph{simultaneously}. We establish that for $p$-exponential priors with $p\leq q$, both the MMLE empirical and hierarchical Bayes posteriors are minimax adaptive (up to logs when $p<q$), for $\beta\in(\ubar{\alpha}+1/p,\bar{\alpha}+1/p)$ where $\ubar{\alpha},\bar{\alpha}>0$ can be chosen arbitrarily small and large, respectively. In particular, \emph{the main contribution of this work is the establishment that Laplace priors lead to minimax or near minimax adaptation over Besov spaces of spatially homogeneous functions as well as Besov spaces permitting spatially inhomogeneous functions.} 

{Although our results refer to Besov spaces with equal integrability indices $q=q'$, the setting is sufficiently rich to exhibit the advantage of using Laplace priors over Gaussian priors from the point of view of contraction rates. This is not surprising, since the minimax and linear minimax rates established in \cite{DJ98} are independent of $q'$. The extension of our results to the general $B^\beta_{qq'}$  case, while outside the scope of this work, is expected to be straightforward, despite requiring more technical calculations and a heavier notation.}

We mention here that adaptive Bayesian procedures for spatially inhomogeneous functions are also studied in the recent preprint \cite{RR21}, where adaptive contraction rates are obtained for the spike-and-slab prior and Bayesian CART under spatially varying H\"older smoothness assumptions. We also mention the upcoming work \cite{MG22}, which studies adaptation over Besov spaces in density estimation based on (hierarchical) Besov priors. {To the best of our knowledge, these two works and the present article are the first Bayesian adaptive results for spatially inhomogeneous functions in the literature.}

In the rest of this introductory discussion we briefly introduce the setting, and overview the general theory for studying adaptivity with MMLE empirical and hierarchical Bayes procedures from \cite{RS17}, as well as the concentration theory for $p$-exponential priors from \cite{ADH21}. In Section \ref{sec2} we state our main results, while their proofs are contained in Section \ref{sec3}. The proofs rely on a number of technical results contained in the Supplement. Finally, Section \ref{sec:conc} contains a brief concluding {discussion on computation, including two simulation experiments relating to the presented hierarchical Bayes procedures for Sobolev truths}.

\subsection{The white noise model}
We consider the white noise model, which serves as a continuous limit of Gaussian nonparametric regression, \cite{BL96}. This idealized model is typically used as a tractable stepping stone, before studying more complex nonparametric models. 

We are interested in inferring a square integrable function $\theta$ on the unit interval, from an observation of a path of the stochastic process
\begin{equation}  \label{eq:wnm} X^{(n)}_{t} = \int^{t}_{0} \theta(s) ds + \frac{1}{\sqrt{n}} W_{t}, \quad t \in [0,1].\end{equation}
Here $W$ is a standard Brownian motion, $n^{-1}$ is the noise level and we will study the small noise asymptotic regime, $n\to\infty$. {The restriction to one dimension is for reasons of notational convenience; we expect the presented theory to generalize to higher dimensional domains in a straightforward way.} Furthermore, for simplicity of our exposition, we assume that in addition to square summable, the unknown function $\theta$ is periodic; we write $\theta\in L_2(\T)$, where $\T=(0,1]$, and equip this space with the usual $L_2$-inner product $\ip{\cdot}{\cdot}_2$ and norm $\norm{\cdot}_2$. We let $P^n_{\theta}$ denote the law of the sample path $X^{(n)}:=(X^{(n)}_{t}:0\leq t\leq 1)$ in the sample space $\mX:=C[0,1]$, where the latter denotes the space of continuous real functions on the unit interval. In particular, $P^n_0$ denotes the law of the scaled Brownian motion $W_{t}/\sqrt{n}$. Then by the Cameron-Martin theorem, the likelihood function is given as
\begin{equation}\label{eq:lr}
p^n_\theta(X):=\frac{dP^n_\theta}{dP^n_0}(X)=\exp\Big(n\ip{X}{\theta}_{2}-\frac{n}2\norm{\theta}^2_{2} \Big), \quad X\in\mX.
\end{equation}
We refer to Section 6.1.1 in \cite{GN16} for more details on the white noise model.

\subsection{The MMLE and adaptive contraction rates}\label{ssec:mmle}
Given a class of prior distributions $\Pi(\cdot|\lambda)$ on the parameter space $\Theta:=L_2(\T)$ indexed by a hyper-parameter $\lambda\in\Lambda$, the associated posterior distributions are given as
\[\Pi(B|X^{(n)},\lambda)=\frac{\int_Bp^n_\theta(X^{(n)})d\Pi(\theta|\lambda)}{\bar{m}(X^{(n)}|\lambda)},\] for any Borel set $B\subseteq\Theta$. Here 
\[\bar{m}(X^{(n)}|\lambda):=\int_\Theta p^n_\theta(X^{(n)})d\Pi(\theta|\lambda),\] denotes the \emph{marginal likelihood} of the path $X^{(n)}$ in \eqref{eq:wnm}, given $\lambda$. Fixing a set of candidate hyper-parameter values $\Lambda_n\subseteq \Lambda$, the MMLE is defined as
\[\hat{\lambda}_n\in \argmax_{\lambda\in \Lambda_n}\,\,\bar{m}(X^{(n)}|\lambda),\]
and leads to the associated empirical Bayes posterior distribution $\Pi(\cdot|X^{(n)},\hat{\lambda}_n)$.

The asymptotic behaviour of the MMLE $\hat{\lambda}_n$ was studied in general settings in \cite{RS17} and sufficient conditions were given securing that, as $n\to\infty$, $\hat{\lambda}_n$ belongs to a set   $\Lambda_0$ of 'good' values of the hyper-parameter with probability tending to 1. More specifically for $\Theta=L_2(\T)$ as assumed here, let $\theta_0\in \Theta$ denote the ground truth and let $\gep_n(\lambda)=\gep_n(\lambda;K,\theta_0)$ be such that 
\begin{equation}\label{eq:en}\Pi(\theta\in\Theta : \norm{\theta-\theta_0}_2\leq K\gep_n(\lambda)|\lambda)=e^{-n\gep_n^2(\lambda)},\end{equation}
where $K>0$ is a constant introduced for additional flexibility. Note that, for 'continuous' priors as the ones considered in this work, $\gep_n(\lambda)$ is defined uniquely by \eqref{eq:en} (see Lemma \ref{lem:existenceenl} in the Supplement below). Let $\gep_{n,0}>0$ be defined through
\begin{equation}\label{defn:en0}
\gep_{n,0}^2 = \inf_{\lambda \in \Lambda_n} \Big\{ \gep_{n}^{2}(\lambda):  \gep_{n}^{2}(\lambda) \geq m_n (\log n) / n \Big\},
\end{equation}
with $m_n$ tending to infinity arbitrarily slowly. Then the set $\Lambda_0$ is given as
\begin{equation}\label{eq:L0Mnint}
\Lambda_{0} (M_{n}) := \{ \lambda \in \Lambda_n : \gep_{n}(\lambda) \leq M_{n} \gep_{n,0}\},
\end{equation}
with $M_{n}$ tending to infinity arbitrarily slowly. 

Having identified $\Lambda_0$, additional sufficient conditions were derived securing that the contraction rate of the MMLE empirical Bayes posterior is bounded by $M_n\gep_{n,0}$. This was based on the techniques in \cite{DRRS18}, where rates of contraction for empirical Bayes posteriors were studied with general estimators of the hyper-parameter $\lambda$ and where the existence of a 'good' set $\Lambda_0$ in which these estimators concentrate asymptotically was postulated. Further sufficient conditions were derived for establishing the sharpness of the rates for each $\theta_0$, as well as conditions for showing that the contraction rates for the MMLE empirical and hierarchical posteriors share the same bounds. We recall these conditions adapted to the assumed white noise model setting in Subsection \ref{sec:conditions} below.


\subsection{$p$-exponential priors and their concentration}\label{ssec:pexp}
The class of \emph{$p$-exponential priors} has been introduced in \cite{ADH21} as a generalization of Gaussian and Besov priors. $B^s_{11}$-Besov priors, which are sequence priors corresponding to wavelet expansions with independent and appropriately weighted Laplace-distributed coefficients, have been proposed in the applied Bayesian inverse problems literature in order to achieve 'edge-preserving' reconstruction; see \cite{LSS09} as well as \cite{LS04, DSS12, KLNS12, DS17}. The weights of the wavelet coefficients are chosen in order to penalize the $B^s_{11}$-norm in the prior probability and, indeed, in \cite{ABDH18} it was established in a general nonparametric inverse regression setting that the \emph{maximum a posteriori estimator} (MAP) arising from these priors is the minimizer of a penalized least squares functional with a $B^s_{11}$-penalty. 

In this work we will study \emph{$\alpha$-regular and $\tau$-scaled $p$-exponential priors}, defined as
\[\Pi(\cdot|\lambda):=\mathcal{L}\big((\gamma_\ell\xi_\ell)_{\ell\in\N}\big), \;\lambda=(\at),\]
where $\xi_\ell$ are independent and identically distributed real random variables with probability density function $f_p(x)\propto \exp(-|x|^p/p), \,x\in\R, \,p\in[1,2]$ and $\gamma_\ell$ are deterministic scalings of the form $\gamma_\ell=\tau\ell^{-1/2-\alpha}$ for some $\alpha, \tau>0$. Note that for $p=1$, these priors are merely a different parametrization of $B^s_{11}$-Besov priors, while for $p=2$ they correspond to the Gaussian priors studied in \cite{SVZ13, KSVZ16}. The concentration properties of $p$-exponential priors in general and $\alpha$-regular $\tau$-scaled $p$-exponential priors in particular, have been studied in \cite{ADH21}. Here we recall two key results for our analysis.

First, note that the term "$\alpha$-regular" is used since, for any $q\ge1$, draws from $\Pi(\cdot|\lambda)$ live in $B^t_{qq}$ with probability one for $t<\alpha$ and with probability zero for $t\ge\alpha$, see \cite[Lemma 5.2]{ADH21}. Associated to an $\alpha$-regular $\tau$-scaled $p$-exponential prior $\Pi(\cdot|\lambda)$ are two sequence spaces. The first is the space of translations of $\Pi$ which produce equivalent measures, which is a separable Hilbert space: 
\[\sht =  \Big\{h \in \R^{\infty}: \tau^{-2} \sum_{\ell=1}^{\infty} h_{\ell}^{2} \ell^{1+2\alpha} < \infty \Big\}, \quad\norm{h}_{\sht}  = \tau^{-1}  \left(\sum_{\ell=1}^{\infty} h_{\ell}^{2} \ell^{1+2\alpha} \right)^{1/2}.\]
The second is the following Banach space 
\[\omat =  \Big\{ h \in \R^{\infty}: \tau^{-p} \sum_{\ell=1}^{\infty} |h_{\ell}|^{p} {\ell}^{p/2 + \alpha p} < \infty \Big\}, \quad\norm{h}_{\omat} = \tau^{-1} \left(\sum_{\ell=1}^{\infty} |h_{\ell}|^{p} {\ell}^{p/2 + \alpha p}\right)^{1/p}.\] 
Note that $\omat \subsetneq \sht \subsetneq \ell_2$ if $p \in [1,2)$, while $\omat = \sht$ if $p = 2$, in which case both spaces are identified with the reproducing kernel Hilbert space (RKHS) of the corresponding Gaussian prior. 

For any $\alpha$-regular $\tau$-scaled $p$-exponential prior, it is straightforward to show that for each $n\in\N$, there exists a unique $\gep_n(\lambda)$ such that \eqref{eq:en} holds, see Lemma \ref{lem:existenceenl} in the Supplement below. Given $\at>0$, we define the \emph{concentration function} at $\theta \in \ell_2$
\begin{align}\label{defn:concfcn}
\varphi_{\theta}(\gep)&:=\inf_{h\in \omat: \norm{h-\theta}_{2}\leq\gep}\norm{h}^p_{\omat}-\log\Pi\left(\gep B_{\ell_2}\mid\alpha,\tau\right),
\end{align}
where $B_{\ell_{2}}$ is the centered unit ball of $\ell_2$. For $\theta=0$, the concentration function measures the probability of centered $\gep$-balls, while for $\theta \neq 0$ it gives a lower bound on the probability of $\gep$-balls around $\theta$
\begin{equation}\label{eq:lowerconc}\Pi(\theta+\gep B_{\ell_2}|\at)\geq \exp\big(-\varphi_\theta(\gep/2)\big),\end{equation} see \cite[Theorem 2.13]{ADH21}. This enables us to obtain upper bounds on the value of $\gep_n(\lambda)$ solving \eqref{eq:en}. However, except in the Gaussian case $p=2$ \cite[Lemma 5.3]{VZ08rkhs}, the concentration function is not known to give an upper bound on the probability of $\gep$-balls around $\theta$. Since $p$-exponential priors are log-concave, one can at least use Anderson's inequality which upper bounds the probability of any non-centered ball by the probability of the corresponding centered one, leading to a lower bound on $\gep_n(\lambda)$, see Lemma \ref{lem:2.2b}. It will transpire that this naive bound is sufficient for our analysis.

Furthermore, $\alpha$-regular $\tau$-scaled $p$-exponential priors satisfy the following concentration inequality derived from the work of Talagrand in \cite{TA94},
\begin{equation}\label{eq:tal}\Pi(\gep B_{\ell_2}+R^{p/2}B_{\sht}+RB_{\omat}|\at)\geq1-\frac{e^{-R^p/\tilde{K}}}{\Pi(\gep B_{\ell_2}|\at)}, \quad \gep>0, R>0,\end{equation}
where $B_\mathcal{Y}$ denotes the centered unit ball in the space $(\mathcal{Y},\norm{\cdot}_{\mathcal{Y}})$ and $\tilde{K}$ is a positive constant depending only on $p$, see \cite[Proposition 2.15]{ADH21}. Inequality \eqref{eq:tal} shows that, even though it is straightforward to check that both spaces $\sht$ and $\omat$ have probability zero under the prior, the prior mass concentrates in small $\gep$-enlargements in $\ell_2$ around sums of large balls of these two spaces.

Together the lower bound \eqref{eq:lowerconc} and the concentration inequality \eqref{eq:tal}, have been employed to study contraction rates by verifying the 'prior-mass around the truth' and 'sieve-set' conditions in non-adaptive general posterior contraction results, such as the ones in the seminal papers \cite{GGV00, GV07} and the textbooks \cite{GV17, GN16}. This was done in \cite{ADH21} in direct settings, while \cite{AW21} and \cite{GR20} supplemented these results with other novel concentration inequalities in order to study non-linear PDE inverse problems and drift estimation for multi-dimensional diffusions, respectively. We will also build on \eqref{eq:lowerconc} and \eqref{eq:tal} in order to verify the more elaborate assumptions of \cite{RS17} and study adaptivity.


\subsection{Periodic Besov spaces}
We consider an orthonormal wavelet basis $\mathcal{E}=\{e_{kl}\}_{k\in\N, 1\leq l \leq 2^k}$ in $L_2(\T)$. We can then uniquely express a function $\theta\in L_2(\T)$ as \[\theta=\sum_{k=1}^\infty\sum_{l=1}^{2^k}\theta_{kl}e_{kl},\] where $\theta_{kl}=\ip{\theta}{e_{kl}}_2$. This allows us to identify a function $\theta\in L_2(\T)$ with the corresponding sequence of coefficients $\{\theta_{kl}\}\in\ell_2$, and in particular to define $p$-exponential priors on sequences, even though the goal is to reconstruct functions. We additionally assume that the basis functions $e_{kl}$ have sufficient H\"older regularity, which allows us to characterize Besov function spaces and in particular the corresponding Besov norms, via the coefficients $\theta_{kl}$ 
\[\norm{\theta}_{B^s_{qq'}}=\left(\sum_{k=1}^\infty 2^{q'k(s+\frac12-\frac1q)}\Big(\sum_{l=1}^{2^k}|\theta_{kl}|^q\Big)^{q'/q}\right)^{1/q'}, \quad s\in\R, \;1\leq q,q'<\infty.\] Note that by working on the torus $\T$, we benefit from a particularly clean sequence representation of the Besov norm, however analogous representations exist which allow treating more general domains with additional technical effort (see for example \cite{AW21} in the non-adaptive inverse problems setting). For details on periodic Besov spaces see \cite[Section 4.3.4]{GN16}.

Since we restrict to the case $q=q'$, enumerating $\{\theta_{kl}\},~{k\in\N, 1\leq l\leq 2^k}$, using a single index as $\{\theta_\ell\}_{\ell\in\N}$, and using the asymptotic equivalence between the sequences $2^{sk}, \,k\in \N, \,1\leq l \leq 2^k$ and $\ell^{s}, \ell\in\N$, for any $s\in\R$, the above characterization of the Besov norms can be simplified to 
\[\norm{\theta}_{B^s_{qq}}=\left(\sum_{\ell=1}^\infty \ell^{qs+\frac{q}2-1}|\theta_\ell|^q\right)^{1/q}, \quad s\in\R, \;1\leq q<\infty.\]
This justifies our definition of the $p$-exponential priors in Subsection \ref{ssec:pexp} via a single index. Note that for $q=2$ we obtain the usual Sobolev-Hilbert spaces which we denote as $H^{s}$. The latter can also be defined using the Fourier basis.

\subsection{Additional notation}
We denote by $E^{n}_{\theta}$ the expectation with respect to $P^n_\theta$, while we denote the log-likelihood of a $\theta$ as $\ell_n(\theta) := \log p_{n}^{\theta} (X)$. The notation $N(\gep, A, d)$ is used for the $\gep$-covering number of a set $A$ with respect to a metric $d$, that is the minimum number of balls of radius $\gep$ with respect to $d$ which are needed to cover the set $A$. For two positive sequences $(a_n), (b_n)$, the notation $a_n\asymp b_n$ means that $a_n/b_n$ is bounded away from zero and infinity, while $a_n\lesssim b_n$ means that $a_n/b_n$ is bounded. For $L>0$ and specifically for Sobolev and Besov spaces, we will use the notation $H^\beta(L)$ and $B^s_{qq}(L)$, respectively, to denote the corresponding ball of radius $L$.


\section{Setup and main results}\label{sec2}
In this section we formulate our main results, which concern adaptive rates of contraction for empirical and hierarchical Bayes procedures based on $\alpha$-regular and $\tau$-scaled $p$-exponential priors in the white noise model. 

{In the case of a truth with Sobolev regularity $\beta$, the non-adaptive upper bounds in \cite[Theorem 5.5]{ADH21} show that the minimax rate can be achieved without rescaling if we choose $\alpha=\beta$, suggesting that a possible strategy for adaptation is to fix $\tau=1$ and choose the regularity, $\lambda=\alpha$. We will see that (similarly to Gaussian priors) another strategy is to fix the regularity and choose the scaling, $\lambda=\tau$, provided $\alpha$ is not too small. We study the Sobolev setting in Subsection \ref{ssec:Sobolev}.}

{For a truth in a Besov space $B^\beta_{qq}$ with $1\leq q<2$, the non-adaptive upper bounds in \cite[Proposition 5.8]{ADH21} only match or nearly match the minimax rate if $p\leq q$, for $\alpha=\beta-1/p$ \emph{and} for an appropriately vanishing sequence of scalings $\tau$ as $n\to\infty$. As a result, in Subsection \ref{ssec:Besov} we study adaptation in the spatially inhomogeneous Besov setting only for $p\leq q$ and by \emph{simultaneously} choosing \emph{both} the regularity \emph{and} the scaling, $\lambda=(\at)$. Note that, since it is not known whether the bounds in \cite[Proposition 5.8]{ADH21} are sharp, it is also not known whether $p\leq q$ and varying $\alpha$ and $\tau$ simultaneously are necessary.}

\subsection{Minimax and linear minimax rates under Besov regularity}
We first recall the minimax rates in $L_2$-loss for the white noise model and under Besov regularity, established in \cite{DJ98}. For a Besov class $B^\beta_{qq}, q\geq1$, with $\beta>1/q$ or $\beta\geq 1$ when $q=1$, the minimax rate depends only on the smoothness $\beta$ 
\begin{equation}\label{eq:minimax}
\minimax:=n^{-\frac{\beta}{1+2\beta}}.
\end{equation}
When restricting to linear estimators, the minimax rate depends on $q$ as well, 
\begin{equation}\label{eq:lminimax}
\lminimax:=n^{-\frac{\beta-\gamma/2}{1+2\beta-\gamma}}, \quad\text{where}\quad \gamma=\frac{2}{q}-\frac{2}{\max(q,2)}.
\end{equation}
In particular, for $q<2$ the linear minimax rate is polynomially slower than the global minimax rate, while for $q\geq2$ the linear and global minimax rates coincide.

\subsection{Prior specification}\label{ssec:prior}
We consider $\alpha$-regular and $\tau$-scaled $p$-exponential priors, $\Pi(\cdot|\lambda)$, as defined in Subsection \ref{ssec:pexp}. Depending on the setting, we will consider either $\lambda=\tau$, $\lambda=\alpha$ or $\lambda=(\at)$ as the hyper-parameter to be chosen, which accordingly lives in either $\Lambda=(0,\infty)$ or $\Lambda=(0,\infty)^2$. We will study two approaches for automatically tuning $\lambda$:
\begin{enumerate}
\item[i)] Empirical Bayes: choose $\lambda$ as the maximum marginal likelihood estimator $\hat{\lambda}_n$ defined in Subsection \ref{ssec:mmle} for an appropriate set of candidate hyper-parameters values $\Lambda_n$, leading to the empirical Bayes posterior $\Pi(\cdot| X^{(n)}, \hat{\lambda}_n);$
\item[ii)] Hierarchical Bayes: postulate a hyper-prior $\tilde{\pi}$ on $\lambda\in \Lambda$, giving rise to the hierarchical prior
\[\Pi(\cdot)=\int_{\Lambda}\tilde{\pi}(\lambda)\Pi(\cdot|\lambda)d\lambda.\]
\end{enumerate}

In the hierarchical Bayes setting, we make the following assumptions for the hyper-priors on $\tau$ and/or $\alpha$, respectively:
\begin{ass}[Hyper-prior distributions]\label{ass:hyper_prior}
\;{$ $}
\begin{enumerate}
\item[i)] Given $\alpha>0$, $\tilde{\pi}(\tau)$ is supported on $\big[n^{-1/(2+p+2\alpha p)},\infty\big)$ and for some ${\conr_0}>0$, there exist ${\conr_1}, {\conr_3}>0$ and ${\conr_2}>1+1/{\conr_0}$ such that
\begin{align} \label{thm1_1}
\begin{split}
e^{-{\conr_1} \tau^{\frac{2}{1+2\alpha}}} \lesssim \tilde{\pi}(\tau) \lesssim \tau^{-{\conr_2}}, \quad \tau \geq 1, \\
 \tilde{\pi}(\tau)\gtrsim e^{-{\conr_3} \tau^{-p}}, \quad n^{-\frac1{2+p+2\alpha p}}\leq \tau \leq 1.
\end{split}
\end{align} 
\item[ii)] $\tilde{\pi}(\alpha)$ is supported on $[\ubar{\alpha},\bar{\alpha}]$ for some fixed $\bar{\alpha}>\ubar{\alpha}>0$, {and there exists  $ \cona>0$ such that}
\begin{equation} \label{thm1_2}
 {\tilde{\pi}(\alpha)\geq\cona, \quad \text{for all}\quad \alpha\in[\ubar{\alpha},\bar{\alpha}].}
\end{equation}
\item[iii)]$\tilde{\pi}(\at)=\tilde{\pi}_\alpha(\alpha)\tilde{\pi}_\tau(\tau|\alpha)$, where $\tilde{\pi}_\alpha(\alpha)$ satisfies part (ii), while $\tilde{\pi}_\tau(\tau|\alpha)$ satisfies  part (i)  with constants $\conr_0,\dots,\conr_3$ and constants in the upper and lower bounds in \eqref{thm1_1} which are uniform in $\alpha$ (note that the lower bound for $\tau\ge1$ is redundant here).
\end{enumerate}
\end{ass}

For example, \eqref{thm1_1} is satisfied  by appropriately left-truncated inverse gamma distributions with constants $\conr_0,\dots,\conr_3$ independent of $\alpha$, \eqref{thm1_2} by truncated exponential distributions, while combinations of such distributions lead to hyper-priors $\tilde{\pi}(\at)$ satisfying Assumption \ref{ass:hyper_prior}(iii).

\subsection{Adaptive rates of contraction over Sobolev spaces}\label{ssec:Sobolev}
We first study adaptation over Sobolev smoothness, using $\alpha$-regular $\tau$-scaled $p$-exponential priors, with a fixed regularity $\alpha$ and by choosing the scaling parameter $\tau$.
  
\begin{thm}[$\alpha$-fixed, $\lambda=\tau$]\label{thm:Sobolev-tau}
 Consider $\alpha$-regular $\tau$-scaled $p$-exponential priors $\Pi(\cdot \mid \at)$ in $\Theta$, where $\alpha>0$ is fixed and $\lambda=\tau>0$. 
 Assume that $\theta_0\in H^\beta{(L)}$ for some $\beta\geq(1+\alpha p)/(p+2\alpha p)$ and $L>0$. 
\begin{enumerate}
\item[i)]Let $\Lambda_n\coloneqq\big[n^{-1/(2+p+2\alpha p)},n^{\alpha}\big]$  be the set of candidate hyper-parameter values for the MMLE $\hat{\lambda}_n$. Then, for $M_n$ tending to infinity arbitrarily slowly, it holds
\[P^n_{\theta_0}\left(\hat{\lambda}_n\in \Lambda_0(M_n)\right)\to1\] 
and the empirical Bayes posterior satisfies

\begin{equation}
\sup_{\theta_0\in H^\beta(L)}E_{\theta_0}^{n} \Pi(\theta:\norm{\theta-\theta_0}_2 \geq M_{n} \enabp | X^{(n)}, \hat{\lambda}_n)=o(1),
\end{equation}
where
\begin{equation} 
\enabp \asymp
\begin{cases}    
     n^{-\frac{\beta}{1+2\beta}}, &\text{for} \quad\text{$\beta < \alpha + 1/p$,}\\
    n^{-\frac{1+\alpha p}{2+p(1+2\alpha)}},  &\text{for} \quad \text{$\beta > \alpha + 1/p$,}\\
    n^{-\frac{\beta}{1+2\beta}}\big(\log{n}\big)^{\frac{2-p}{2p(1+2\beta)}}, &\text{for} \quad  \text {$\beta = \alpha + 1/p$.}
 \end{cases}\end{equation} 
\item[ii)] Similarly, for a hyper-prior on $\lambda=\tau$ which satisfies Assumption \ref{ass:hyper_prior}(i) and for $M_n$ tending to infinity arbitrarily slowly, the hierarchical posterior satisfies 
\begin{equation}
\sup_{\theta_0\in H^\beta(L)}E_{\theta_0}^{n} \Pi(\theta:\norm{\theta-\theta_0}_2 \geq M_{n} \enabp | X^{(n)})=o(1).
\end{equation}
\end{enumerate}  
\end{thm}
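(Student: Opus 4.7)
The plan is to apply the general framework of \cite{RS17} which, once several technical conditions are verified, automatically delivers both the concentration of the MMLE in $\Lambda_0(M_n)$ and the contraction of the empirical and hierarchical Bayes posteriors at rate $M_n\gep_{n,0}$. So the work is to (a) compute $\gep_n(\tau)$ solving \eqref{eq:en}, (b) minimise over $\tau\in\Lambda_n$ to identify $\gep_{n,0}=\enabp$ up to constants, and (c) verify the \cite{RS17} prior-mass, sieve-set, and entropy conditions for $\alpha$-regular $\tau$-scaled $p$-exponential priors.

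Step (a) is the heart of the matter. The upper bound on $\gep_n(\tau)$ comes from \eqref{eq:lowerconc}: it suffices to find $\gep$ so that $\varphi_{\theta_0}(\gep)\lesssim n\gep^2$. For the centred small-ball term $-\log\Pi(\gep B_{\ell_2}|\at)$ one expects, by arguments analogous to those underlying \cite[Proposition 2.15]{ADH21}, a bound of order $\tau^{-\frac{p}{1+\alpha p}}\gep^{-\frac{2-p(1+2\alpha)}{1+2\alpha}}$ (or a logarithmic correction at the boundary). For the bias term $\inf_{\|h-\theta_0\|_2\le\gep}\|h\|_{\omat}^p$, with $\theta_0\in H^\beta$ and $\alpha$ fixed, a standard truncation-to-level-$L$ argument gives a bound of order $\tau^{-p}\gep^{-(\alpha+1/p-\beta)p/\beta}$ (up to logs at $\beta=\alpha+1/p$), with the sign of $\alpha+1/p-\beta$ determining whether the bias term is active. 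The matching lower bound on $\gep_n(\tau)$ follows from Anderson's inequality as in Lemma \ref{lem:2.2b}. Minimising the resulting expression in $\tau\in\Lambda_n$ produces the three regimes displayed in the theorem: in the oversmoothing regime $\beta<\alpha+1/p$, the bias term is dominant and can be annihilated by taking $\tau$ large, yielding the minimax rate $n^{-\beta/(1+2\beta)}$; in the undersmoothing regime $\beta>\alpha+1/p$ the bias term vanishes and the rate is driven purely by the small-ball term minimised in $\tau$, giving the prior-intrinsic rate $n^{-(1+\alpha p)/(2+p(1+2\alpha))}$; the critical case $\beta=\alpha+1/p$ picks up a logarithm.

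For step (c), the prior-mass condition around $\theta_0$ reduces to the bound on $\varphi_{\theta_0}$ from step (a) together with \eqref{eq:lowerconc}. The sieve-set condition is handled by choosing, for each $\tau\in\Lambda_n$, sieves of the Talagrand form $\gep B_{\ell_2}+R^{p/2}B_{\sht}+R B_{\omat}$ with $R=R_n(\tau)$ large enough that \eqref{eq:tal} yields the required $e^{-Cn\gep_n^2(\tau)}$ complement probability, uniformly over $\lambda\in\Lambda_n$ (this uniformity, across the full MMLE search range, is one of the things that forces the particular lower truncation $n^{-1/(2+p+2\alpha p)}$ of $\Lambda_n$). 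The metric entropy of such sieves in $\ell_2$ can be controlled by standard interpolation inequalities between the Hilbert space $\sht$ and the Banach space $\omat$, giving the log-entropy bound $\lesssim n\gep_n^2(\tau)$ needed by \cite{RS17}. Finally, the evidence-lower-bound condition of \cite{RS17} also needs a matching \emph{upper} bound on the prior mass of small balls around $\theta_0$ which, as noted after \eqref{eq:lowerconc}, for $p<2$ is only available via the crude Anderson bound, but the calculation shows that this suffices here.

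Part (ii) will follow from part (i) together with the general implication established in \cite{RS17} that hierarchical and MMLE empirical posteriors share the same contraction rate whenever the hyper-prior places sufficient mass on neighbourhoods of $\Lambda_0(M_n)$ in $\Lambda_n$ and decays sufficiently fast outside; Assumption \ref{ass:hyper_prior}(i) is tailored precisely to this requirement, with the upper and lower bounds in \eqref{thm1_1} designed to dominate, respectively, the polynomial and stretched-exponential factors appearing in the sieve and prior-mass estimates. The main obstacles I anticipate are: the careful and somewhat delicate evaluation of $\varphi_{\theta_0}(\gep)$ for general $p\in[1,2]$, including the logarithmic corrections at $\beta=\alpha+1/p$; keeping all estimates uniform over $\tau\in\Lambda_n$ (which is needed for the MMLE step, not just for a single well-chosen $\tau$); and verifying the \cite{RS17} conditions without access to an explicit marginal likelihood, which means all comparisons of marginal likelihoods at different hyper-parameter values must be routed through prior-mass and sieve bounds.
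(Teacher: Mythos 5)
Your overall strategy — compute $\gep_n(\tau)$ from the concentration function, optimize over $\tau\in\Lambda_n$ to get $\gep_{n,0}$, and verify the conditions of \cite{RS17} — is exactly the route the paper takes. However, there are two concrete problems. First, your asserted asymptotic for the centred small-ball term is wrong: for an $\alpha$-regular $\tau$-scaled $p$-exponential prior one has $-\log\Pi(\gep B_{\ell_2}\mid\at)\asymp(\gep/\tau)^{-1/\alpha}$ (this is Aurzada's result, quoted in Lemma \ref{lem:2.2b} via \eqref{lem2.2_3}, valid only in the regime $\gep\lesssim\tau$), not $\tau^{-p/(1+\alpha p)}\gep^{-(2-p(1+2\alpha))/(1+2\alpha)}$. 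Your exponent of $\gep$ even becomes positive once $p(1+2\alpha)>2$, which cannot be a small-ball exponent, and balancing $n\gep^2$ against your expression does not reproduce the term $n^{-\alpha/(1+2\alpha)}\tau^{1/(1+2\alpha)}$ in \eqref{gepn_t}, hence neither the optimal $\tau_0=n^{-1/(2+p+2\alpha p)}$ nor the rate $n^{-(1+\alpha p)/(2+p(1+2\alpha))}$ in the regime $\beta>\alpha+1/p$. (Your bias term $\tau^{-p}\gep^{-(\alpha+1/p-\beta)p/\beta}$ is correct.) Relatedly, you never say where the hypothesis $\beta\geq(1+\alpha p)/(p+2\alpha p)$ is used: it is precisely what guarantees $\gep_n(\tau)\leq\tilde{C}\tau$ for all $\tau$ down to the lower endpoint of $\Lambda_n$, without which the small-ball bounds (and all constants derived from them) are not uniform over the MMLE search range.

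Second, you describe the conditions to be verified as the classical prior-mass/sieve/entropy triple, but the conditions of \cite{RS17} are genuinely different: they involve the measures $Q^\theta_{\lambda,n}$ built from $\sup_{\rho(\lambda,\lambda')\leq u_n}p^n_{\psi_{\lambda,\lambda'}(\theta)}$, i.e.\ one must control likelihoods of \emph{perturbed} parameters $\psi_{\lambda,\lambda'}(\theta)=(\tau'/\tau)\theta$ uniformly over $u_n$-neighbourhoods of $\tau$, to account for the data-dependence of $\hat{\lambda}_n$. The Talagrand set $\gep B_{\ell_2}+R^{p/2}B_{\sht}+RB_{\omat}$ alone does not give enough control of $\norm{\psi_{\lambda,\lambda'}(\theta)-\theta}_2$ and of $Q^\theta_{\lambda,n}(\mX)$; the paper must intersect it with a growing Sobolev ball $Y_nB_{H^s}$ (Lemma \ref{lem:C1} and Remark \ref{rem:intersect}) and invoke a Fernique-type theorem for log-concave measures (Lemma \ref{lem:fernique}) to keep the resulting bounds uniform in $\tau$. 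This is the main technical novelty of the proof and is absent from your plan. Your treatment of part (ii) is also too quick: because only upper bounds on $\gep_n(\lambda)$ are available, (H1) can only be verified with $\tilde{w}_n$ tied to the optimized upper bound $\enab$ rather than for arbitrary $M_n$, and the set $\Lambda_n$ must be enlarged to $[n^{-1/(2+p+2\alpha p)},\exp(2\conr_0\bar{c}_0n\tilde{w}_n^2\gep_{n,0}^2)]$ with a second, much smaller choice of $u_n$ on $\Lambda_n\setminus\Lambda_0$ to accommodate polynomially-tailed hyper-priors.
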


Note that, since $p\le 2$, the assumption $\beta\ge (1+\alpha p)/(p+2\alpha p)$ is satisfied for example when $\beta\ge 1/p$. Assuming $\beta\ge (1+\alpha p)/(p+2\alpha p)$, the  minimax rate $\minimax$ is attained by both the empirical and hierarchical Bayes approaches of choosing the scaling $\tau$, provided that $\beta<\alpha+1/p$, that is provided the truth is not too smooth compared to the prior on $\theta$. Interestingly, the smaller $p$ is, the more undersmoothing the prior can be while still achieving the minimax rate by rescaling. {This is perhaps counterintuitive, since on the one hand Gaussian priors were expected to be the most compatible with the $\ell_2$ structure of Sobolev spaces among all $p$-exponential priors, and on the other hand the smaller $p$ is the heavier-tailed the prior which should be less suitable for smoother functions. However, it turns out that the smaller $p$ is, the larger the capacity of a $p$-exponential prior to exploit vanishing scalings $\tau$ to achieve the minimax rate for functions with smoothness higher than the prior regularity. This can be understood by examining the concentration function \eqref{defn:concfcn}: the penalty payed in the infimum term for letting $\tau\to0$ is smaller the smaller $p$ is, while the gain in the centered small ball probability term is identical for all $p$ (see \eqref{lem2.2_3} in Lemma \ref{lem:2.2b} below).}  Note that for $p=2$ our results are compatible with the results of \cite{SVZ13, RS17} (which are in slightly different settings: the former over hyper-rectangles, the latter using truncated rescaled $\alpha$-regular Gaussian priors).

We next study adaptation over Sobolev smoothness, by $\alpha$-regular $\tau$-scaled $p$-exponential priors, with fixed scaling $\tau=1$ and choosing the regularity parameter $\alpha$.  
\begin{thm}[$\tau$-fixed, $\lambda=\alpha$]\label{thm:Sobolev-alpha}
Consider $\alpha$-regular $\tau$-scaled $p$-exponential priors $\Pi(\cdot \mid \at)$ in $\Theta$, where $\tau=1$, $\lambda=\alpha$ and $\alpha\in [\ubar{\alpha},\bar{\alpha}]$ for some fixed $\bar{\alpha}>\ubar{\alpha}>0$.  Assume that $\theta_0\in H^\beta{(L)}$ for some $\beta\in(\ubar{\alpha},\bar{\alpha})$ and $L>0$. 
\begin{enumerate}  
\item[i)] Let $\Lambda_n\coloneqq[\ubar{\alpha},\bar{\alpha}]$ be the set of candidate hyper-parameter values for the MMLE $\hat{\lambda}$ (constant for all $n\in\N$).  
Then, for any $M_n$ tending to infinity arbitrarily slowly, it holds
\[P^n_{\theta_0}\Big(\hat{\lambda}_n\in \Lambda_0(M_n)\Big)\to1\]
and the empirical Bayes posterior satisfies\begin{equation}\label{eq:thm2328}
\sup_{\theta_0\in H^\beta(L)}E_{\theta_0}^{n} \Pi(\theta:\norm{\theta-\theta_0}_2 \geq M_{n} \minimax | X^{(n)}, \hat{\lambda}_n)=o(1),
\end{equation}
where $\minimax$ is the minimax rate from \eqref{eq:minimax}.
\item[ii)] Similarly, for a hyper-prior on $\lambda=\alpha$ which satisfies Assumption \ref{ass:hyper_prior}(ii), for $M_n$ tending to infinity arbitrarily slowly, the hierarchical posterior satisfies 
\begin{equation}
\sup_{\theta_0\in H^\beta(L)}E_{\theta_0}^{n} \Pi(\theta:\norm{\theta-\theta_0}_2 \geq M_{n} \minimax | X^{(n)})=o(1).
\end{equation}
\end{enumerate} 
\end{thm}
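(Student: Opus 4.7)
The plan is to apply the general empirical/hierarchical Bayes machinery of \cite{RS17} outlined in Subsection \ref{ssec:mmle}. This reduces the problem to three tasks: (a) determining the function $\alpha \mapsto \gep_n(\alpha)$ implicitly defined by \eqref{eq:en}; (b) identifying $\gep_{n,0}$ from \eqref{defn:en0} and the good set $\Lambda_0(M_n)$ from \eqref{eq:L0Mnint}; and (c) verifying the accompanying prior-mass, sieve, and testing conditions from \cite{RS17} uniformly in $\alpha \in \Lambda_0(M_n)$. Since the MMLE for $\tau=1$, $\lambda=\alpha$ is automatic from the framework and the explicit form of the marginal likelihood is unavailable for $p<2$, this indirect route through \cite{RS17} is exactly where the concentration-function bounds \eqref{eq:lowerconc} and Talagrand's inequality \eqref{eq:tal} come in.

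For task (a), the upper bound on $\gep_n(\alpha)$ follows from estimating the concentration function $\varphi_{\theta_0}(\gep)$ in \eqref{defn:concfcn}. The centered small-ball term obeys $-\log \Pi(\gep B_{\ell_2}\mid\alpha,1)\asymp \gep^{-2/(1+2\alpha)}$ by standard $p$-exponential arguments (cf.\ \cite[Section 4]{ADH21}). The infimum term is controlled by truncating $\theta_0\in H^\beta(L)$ to a frequency $N$ depending on $\gep$: for $\alpha<\beta$, the embedding $H^\beta\hookrightarrow \omat$ (up to logarithmic factors at $\alpha=\beta$) allows a finite $\omat$-norm approximant, while for $\alpha\geq\beta$ the approximation deteriorates but the centered ball term improves. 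Anderson's inequality (Lemma \ref{lem:2.2b}) supplies the matching lower bound. The net result is that $\gep_n(\alpha)\asymp n^{-\min(\alpha,\beta)/(1+2\alpha)}$ (modulo a logarithmic correction at the critical value $\alpha=\beta$). Minimising over $\alpha\in[\ubar\alpha,\bar\alpha]$ one finds the infimum at $\alpha=\beta\in(\ubar\alpha,\bar\alpha)$, so $\gep_{n,0}\asymp \minimax$ and $\Lambda_0(M_n)$ is a shrinking neighbourhood of $\beta$.

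For task (c), sieves $\Theta_n(\alpha) = \gep_n(\alpha)B_{\ell_2}+R^{p/2}B_{\sht}+R B_{\omat}$ are constructed from \eqref{eq:tal}, with $R=R_n(\alpha)$ tuned so that the remainder mass is $\ll e^{-(C+4)n\gep_n^2(\alpha)}$; their metric entropy is controlled using the classical entropy estimates for Besov balls. These bounds and the prior-mass inequality around $\theta_0$ coming from \eqref{eq:lowerconc} need to be uniform in $\alpha$ over a whole neighbourhood of $\beta$, which is the main technical obstacle: all implicit constants in the small-ball estimate, in the approximation of $\theta_0$, and in Talagrand's inequality must be tracked as functions of $\alpha$, and one must also verify the smoothness/monotonicity hypotheses on $\alpha\mapsto \gep_n(\alpha)$ imposed by \cite{RS17}. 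Once uniformity is in place, the empirical-Bayes part of \cite{RS17} yields both $P^n_{\theta_0}(\hat\lambda_n\in\Lambda_0(M_n))\to 1$ and the contraction rate $M_n\gep_{n,0}=M_n\minimax$.

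Finally, for the hierarchical statement, Assumption \ref{ass:hyper_prior}(ii) gives $\tilde\pi(\alpha)\geq\cona$ on $[\ubar\alpha,\bar\alpha]$, so in particular $\tilde\pi$ assigns non-vanishing mass to every neighbourhood of $\beta$; combined with the already-verified uniform prior-mass/sieve conditions, the hierarchical contraction theorem from \cite{RS17} applies and gives the same rate $M_n\minimax$. The uniform-in-$\alpha$ control from step (c) is therefore what makes \emph{both} adaptive statements follow at once; without the explicit marginal likelihood available in the Gaussian case, it is also the part I expect to require the most care.
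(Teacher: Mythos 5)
Your overall route coincides with the paper's: reduce to the conditions of \cite{RS17} (Corollary \ref{corRS17}), bound $\gep_n(\alpha)$ through the concentration function \eqref{defn:concfcn} together with Anderson's inequality, locate the optimum at $\alpha=\beta$ so that $\gep_{n,0}\lesssim \minimax$ (Lemma \ref{lem:Lemma3.3}(ii)), and use the uniform lower bound on $\tilde{\pi}(\alpha)$ for the hierarchical part. There is, however, one concrete gap. The conditions of \cite{RS17} are not the usual prior-mass/sieve/test conditions: they are stated for the measures $Q^{\theta}_{\lambda,n}$ of \eqref{eq:Q}, i.e.\ suprema of the likelihood over all $\psi_{\lambda,\lambda'}(\theta)$ with $\rho(\lambda,\lambda')\le u_n$, which is how the data-dependence of $\hat{\alpha}_n$ is absorbed. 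For $\lambda=\alpha$ the perturbation acts coordinatewise as $\theta_j\mapsto j^{\alpha-\alpha'}\theta_j$, so the oscillation of $\psi_{\lambda,\lambda'}(\theta_j)$ over a $\rho$-ball is of order $u_n(\log j)\,j^{u_n}|\theta_j|$, and bounding $Q^{\theta}_{\lambda,n}(\mX)$ in \eqref{eqA1.1}, or the type-II errors under $Q^{\theta'}_{\lambda,n}$ in \eqref{eqA2.1}, requires a quantitative bound on $\sum_j j^{s'-1/2}|\theta_j|$ uniformly over the sieve. The pure Talagrand set $\gep B_{\ell_2}+R^{p/2}B_{\sht}+RB_{\omat}$ that you propose does not provide this, because its $\ell_2$-ball component contains arbitrarily rough elements. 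The paper therefore intersects the Talagrand set with a growing ball $Y_nB_{H^s}$, $0<s<\ubar{\alpha}$ (Lemma \ref{lem:C1}, Remark \ref{rem:intersect}), and uses a Fernique-type theorem for log-concave measures (Lemma \ref{lem:fernique}) to show this intersection costs only $O(e^{-\eta n\gep_n^2})$ in prior mass, uniformly in $\alpha$; the resulting $H^s$-bound is precisely what makes Lemmas \ref{lem:E1} and \ref{lem:E3} go through. Without this (or an equivalent device) the verification of (A1), (A2), (C1), (C2) fails.

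Two smaller corrections. First, your assertion that $\Lambda_0(M_n)$ is a shrinking neighbourhood of $\beta$ is not justified and is not what the paper proves: only upper bounds on $\gep_n(\alpha)$ are available (the log-concavity lower bound of Lemma \ref{lem:2.2b} is too crude when the prior oversmooths), so $\Lambda_0$ cannot be identified explicitly (Remark \ref{rem2.6}(c)); what is actually shown, and all that condition (H1) needs, is the one-sided inclusion that a $1/\log n$-neighbourhood of $\beta$ lies inside $\Lambda_0(\tilde{w}_n)$ (Lemma \ref{lem:3.6}). Relatedly, conditions (A1)--(A2) must be verified for every $\alpha\in\Lambda_n\setminus\Lambda_0$, so the uniform-in-$\alpha$ control of all constants (in particular the small-ball constants, Lemma \ref{lem:centeredconstants}) is needed over the whole interval $[\ubar{\alpha},\bar{\alpha}]$, not merely over a neighbourhood of $\beta$. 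Second, for $p<2$ the rate for $\alpha>\beta$ is governed by the term $(n)^{\beta/(\beta(p-2)-\alpha p-1)}$ of \eqref{gepn_t} rather than $n^{-\beta/(1+2\alpha)}$, and the relevant dichotomy in the concentration-function bound is $\beta\lessgtr\alpha+1/p$ rather than $\alpha\lessgtr\beta$; this does not change the conclusion that the optimum is at $\alpha=\beta$ with value $\minimax$, but your stated formula for $\gep_n(\alpha)$ is only correct in the Gaussian case.
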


For any $p\in[1,2]$, the minimax rate $\minimax$ is attained by both the empirical and hierarchical Bayes approaches of choosing the regularity $\alpha$, provided the truth has Sobolev regularity in the arbitrarily large interval $(\ubar{\alpha},\bar{\alpha})$, $\bar{\alpha}>\ubar{\alpha}>0$. For $p=2$, this is compatible with the result for truncated Gaussian priors in \cite{KSVZ16} and \cite[Proposition 3.2]{RS17}.

\begin{rem}
Note that Theorems \ref{thm:Sobolev-tau} and \ref{thm:Sobolev-alpha} remain true for $\theta_0\in B^\beta_{qq}$ with $q\geq2$, that is over Besov spaces which do not permit spatial inhomogeneity in the truth.
This is a trivial implication of \cite[Proposition 5.4]{ADH21} which shows that our upper bounds on the rates $\gep_n(\lambda)$ solving \eqref{eq:en} are identical  for all $q\ge2$.
\end{rem}

\subsection{Adaptive rates of contraction over Besov spaces permitting spatial inhomogeneity}\label{ssec:Besov}
We finally study adaptation over Besov spaces $B^\beta_{qq}$, with $1\leq q<2$, using $\alpha$-regular $\tau$-scaled $p$-exponential priors with $p\le q$ and choosing both the regularity $\alpha$ and the scaling $\tau$, simultaneously. 

\begin{thm}\label{thm:Besov}
Consider $\alpha$-regular $\tau$-scaled $p$-exponential priors $\Pi(\cdot \mid \at)$ in $\Theta$, where $\lambda=(\at)$ for $\tau>0$ and $\alpha\in [\ubar{\alpha},\bar{\alpha}]$ with $\bar{\alpha}>\ubar{\alpha}>0$ fixed. Assume that $\theta_0\in B^\beta_{qq}(L)$ for $1\leq p\leq q<2$, $\beta\in(\ubar{\alpha}+1/p,\bar{\alpha}+1/p)$ and $L>0$. 

\begin{enumerate}
\item[i)]Let 
$$\Lambda_{n} := \Big\{(\at): \alpha\in[\ubar{\alpha}, \bar{\alpha} ], \tau\in\big[n^{-\frac1{2+p+2\alpha p}}, n^\alpha \big]\Big\}, $$
 be the set of candidate hyper-parameter values for the MMLE $\hat{\lambda}_n$. Then, for $M_n$ tending to infinity arbitrarily slowly, it holds
\[P^n_{\theta_0}\Big(\hat{\lambda}_n\in \Lambda_0(M_n)\Big)\to1\]
and the empirical Bayes posterior satisfies
\begin{equation}
\sup_{\theta_0\in B^\beta_{qq}(L)}E_{\theta_0}^{n} \Pi(\theta:\norm{\theta-\theta_0}_2 \geq M_{n} \minimax{\log(n)^{\frac{q-p}{pq(1+2\beta)}}} | X^{(n)}, \hat{\lambda}_n)=o(1),
\end{equation}
where $\minimax$ is the minimax rate from \eqref{eq:minimax}.
\item[ii)] Similarly, for a hyper-prior on $\lambda=(\at)$ which satisfies Assumption \ref{ass:hyper_prior}(iii), for $M_n$ tending to infinity arbitrarily slowly, the hierarchical posterior satisfies 
\begin{equation}
\sup_{\theta_0\in B^\beta_{qq}(L)}E_{\theta_0}^{n} \Pi(\theta:\norm{\theta-\theta_0}_2 \geq M_{n} \minimax\log(n)^{\frac{q-p}{pq(1+2\beta)}} | X^{(n)})=o(1).
\end{equation}
\end{enumerate} 
\end{thm}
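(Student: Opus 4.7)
The plan is to follow the general framework of \cite{RS17} for MMLE-based empirical and hierarchical Bayes adaptation, fed with the concentration inputs \eqref{eq:lowerconc}, \eqref{eq:tal} for $p$-exponential priors from \cite{ADH21}. The proof decomposes into: (a) quantifying the rate $\gep_n(\at)$ solving \eqref{eq:en} for each $(\at)\in\Lambda_n$; (b) identifying the oracle rate $\gep_{n,0}$ and the good set $\Lambda_0(M_n)$; (c) verifying that the MMLE (and the posterior on $\lambda$) concentrates in $\Lambda_0(M_n)$; and (d) deducing the contraction rate by standard sieve/testing arguments.

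For step (a), I would obtain an upper bound on $\gep_n(\at)$ by estimating the concentration function \eqref{defn:concfcn} at a $\theta_0\in B^\beta_{qq}(L)$, mimicking the calculation behind \cite[Proposition 5.8]{ADH21}: since $p\le q<2$, after decomposing $\theta_0$ into low and high frequencies one can approximate the high-frequency part by a truncated/sparsified element of $\omat$, controlling the infimum in \eqref{defn:concfcn}; combined with the centered small-ball bound \eqref{lem2.2_3} this yields a bound $\bar\gep_n(\at)$ uniform over $B^\beta_{qq}(L)$. A matching lower bound on $\gep_n(\at)$, needed to rule out very small or very large $(\at)$ from the MMLE, is obtained via Anderson's inequality (Lemma \ref{lem:2.2b}). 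Minimizing $\bar\gep_n(\at)$ over $\Lambda_n$ gives, in step (b), the oracle rate $\gep_{n,0}\asymp \minimax(\log n)^{(q-p)/(pq(1+2\beta))}$, essentially attained at the pair $\alpha^\ast=\beta-1/p\in(\ubar\alpha,\bar\alpha)$ together with an appropriate polynomially vanishing scaling $\tau^\ast$, which motivates the particular shape of $\Lambda_n$.

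For step (c), the MMLE conditions from \cite{RS17} require (i) a lower bound on the marginal likelihood $\bar m(X^{(n)}|\lambda)$ at an oracle point in $\Lambda_0(M_n)$, obtained by combining \eqref{eq:lowerconc} with a Chernoff-type argument as in \cite{GV07}; and (ii) an exponential upper bound on $\bar m(X^{(n)}|\lambda)$ for $\lambda\in\Lambda_n\setminus\Lambda_0(M_n)$, which is the most delicate point and for which the natural sieves are the sets $\gep B_{\ell_2}+R^{p/2}B_{\sht}+RB_{\omat}$ from Talagrand's inequality \eqref{eq:tal}, whose metric entropy in $\norm{\cdot}_2$ can be bounded by standard embedding arguments. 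Once $\hat\lambda_n\in\Lambda_0(M_n)$ with probability tending to one, the contraction rate follows from the empirical Bayes theorem of \cite{DRRS18,RS17}, whose remaining mass, prior mass, and testing assumptions are verified uniformly in $\Lambda_0(M_n)$ using again \eqref{eq:lowerconc}, \eqref{eq:tal} and Lemma \ref{lem:2.2b}. For the hierarchical statement, I would combine the same technical estimates with Assumption~\ref{ass:hyper_prior}(iii): the hyper-prior mass of a small neighbourhood of $(\alpha^\ast,\tau^\ast)$ is lower bounded by \eqref{thm1_1}--\eqref{thm1_2}, which contributes at most a factor $\exp(-c(\tau^\ast)^{-p})$; by construction of $\Lambda_n$ this factor is absorbed into $n\gep_{n,0}^2$ by a slight enlargement of $M_n$, so the hierarchical analogue of the \cite{RS17} theorem applies.

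The main obstacle I expect is step (c), and in particular the uniform upper bound on the marginal likelihood outside $\Lambda_0(M_n)$: unlike the Gaussian case of \cite{SVZ13,KSVZ16}, no closed form is available for $p<2$, so one must rely on the two-sided control of $\gep_n(\at)$ obtained via \eqref{eq:lowerconc} and Anderson's inequality and then propagate it through the MMLE conditions of \cite{RS17}. The difficulty is compounded by the fact that we tune $\alpha$ and $\tau$ simultaneously, so $\Lambda_0(M_n)$ is two-dimensional and the bounds must be uniform in both coordinates, including carefully tracking the dependence of all constants on $\alpha\in[\ubar\alpha,\bar\alpha]$. These calculations are of a technical nature and I would defer them to the Supplement, referring in the main text only to the resulting bounds.
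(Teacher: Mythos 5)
Your overall strategy coincides with the paper's: bound $\gep_n(\at)$ through the concentration function and Anderson's inequality (Lemmas \ref{lem:2.2b}, \ref{lem:Lemma3.2}, \ref{lem:BBesov}), optimize at $\alpha^\ast=\beta-1/p$ and a vanishing $\tau^\ast$ to get the oracle rate, and feed everything into the MMLE/hierarchical machinery of \cite{RS17}. However, there is one genuine gap in step (c): you propose the raw Talagrand sets $\gep B_{\ell_2}+R^{p/2}B_{\sht}+RB_{\omat}$ as sieves. The conditions of \cite{RS17} are not formulated for the likelihood $p^n_\theta$ but for the measures $Q^\theta_{\lambda,n}$ obtained by taking a supremum of $p^n_{\psi_{\lambda,\lambda'}(\theta)}$ over a $\rho$-ball of hyper-parameters, and both \eqref{eqA1.1} and the type-II error bound in \eqref{eqA2.1} require a uniform control of $\sup_{\rho(\lambda,\lambda')\le u_n}\norm{\psi_{\lambda,\lambda'}(\theta)-\theta}$-type quantities over the sieve. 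Since $\psi_{\lambda,\lambda'}$ multiplies $\theta_j$ by $(\tau'/\tau)j^{\alpha-\alpha'}$, this forces summability of $j^{s'-1/2}|\theta_j|$ for some $s'>1/2$, which elements of the $\ell_2$-ball component $\gep B_{\ell_2}$ of the Talagrand set need not satisfy; the bound $Q^\theta_{\lambda,n}(\mX)\le\exp(cu_n\sqrt{n}\norm{\theta}_{H^s})$ of Lemma \ref{lem:E1} is simply unavailable there. The paper's fix (Lemma \ref{lem:C1}, Remark \ref{rem:intersect}) is to intersect the Talagrand set with a Sobolev ball $Y_nB_{H^s}$ of suitably growing radius, and then to invoke a Fernique-type theorem for log-concave measures (Lemma \ref{lem:fernique}) to show that this intersection still carries enough prior mass, with constants uniform over $\alpha\in[\ubar{\alpha},\bar{\alpha}]$. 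Without this modification the verification of (A1), (A2) and (C1)--(C3) would fail, so this is a missing idea rather than a deferred computation.

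Two smaller points. First, you speak of a ``matching'' lower bound on $\gep_n(\at)$ via Anderson's inequality; the paper stresses that this bound is crude and does \emph{not} match the upper bound when the prior oversmooths (Remark \ref{rem2.6}(c)), which is why $\Lambda_0$ cannot be characterized explicitly and why the hierarchical condition (H1) is verified with $\tilde{w}_n=\tilde{M}_n\minimax(\log n)^{\omega'}/\gep_{n,0}$ rather than with $\gep_{n,0}$ itself (Lemma \ref{lem:3.7}); the crude bound is nonetheless sufficient, as you correctly anticipate. Second, the entropy of the sieve is not obtained by ``standard embedding arguments'' but by the inclusion $R^{p/2}B_{\sht}\subseteq\gep B_{\ell_2}+aB_{\omat}$ followed by a packing/prior-mass argument in the spirit of Kuelbs--Li, with the constants again tracked uniformly in $\alpha$ via Lemma \ref{lem:centeredconstants}.
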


The minimax rate is attained when $p=q$, while for $p<q$ the minimax rate is attained up to logarithmic terms. {Note, that since we consider $\alpha\in[\ubar{\alpha}, \bar{\alpha}]$ and since in the non-adaptive results in \cite[Proposition 5.8]{ADH21} the minimax rate is achieved for $\alpha=\beta-1/p$ (for an appropriate choice of $\tau$), it is not surprising that the adaptation range is $\beta\in(\ubar{\alpha}+1/p, \bar{\alpha}+1/p)$.} In particular, \emph{Laplace priors} attain the minimax or nearly the minimax rate over all Besov spaces $B^\beta_{qq}$ with $1\le q<2$ and regularity $\beta$ in the arbitrarily large interval $(\ubar{\alpha}+1, \bar{\alpha}+1)$. Importantly, by choosing $\ubar{\alpha}$ sufficiently small, we can guarantee that Laplace priors attain the minimax rate over $B^\beta_{11}$ where $\beta$ can be arbitrarily close to 1. [The space $B^1_{11}$ is the well known "bump algebra" which contains functions of "considerable spatial inhomogeneity", \cite{DJ98}.]  In the case $p>q$ which is not considered here, the non-adaptive bounds in \cite[Proposition 5.8]{ADH21} suggest that the attained rate is substantially slower than minimax. Although it is not known whether these bounds are sharp, it is shown in \cite[Theorem 4.1]{AW21} that Gaussian priors ($p=2$) are fundamentally limited by the suboptimal linear minimax rate from \eqref{eq:lminimax}.
 
 \subsection{Technical comments}
The proofs of our three main results can be found in Section \ref{sec3}, and broadly follow the techniques of \cite[Section 3.5]{RS17} where adaptive rates of contraction are studied with truncated Gaussian priors in the setting of nonparametric regression. Due to the fact that we consider the broader class of $p$-exponential priors and without  truncation, we need to introduce certain new techniques in order to verify the assumptions of the invoked general theory of \cite{RS17}. {For example, we rely on a Fernique-type theorem for log-concave priors to obtain concentration of measure inequalities with constants which are uniform with respect to the hyper-parameter(s) $\lambda$ and we borrow ideas from the recent literature on posterior contraction for inverse problems to get better controlled sieve sets, see Lemma \ref{lem:fernique}  
and Remark \ref{rem:intersect} in the Supplement below, respectively. A significant amount of effort is also devoted in the careful control of the constants in the small ball probability estimates, uniformly with respect to the hyper-paremeter(s) $\lambda$ as required by the general theory, see Remark \ref{rem:eta} and Lemma \ref{lem:centeredconstants} in the Supplement below}. We close this section with some technical comments on our results.

\begin{rem}\label{rem2.6}{\quad}
\begin{enumerate}
\item[a)] Theorems \ref{thm:Sobolev-tau}, \ref{thm:Sobolev-alpha} and \ref{thm:Besov} assume that  $\beta\ge (1+\alpha p)/(p+2\alpha p)$, $\beta\in(\ubar{\alpha},\bar{\alpha})$ and $\beta\in(\ubar{\alpha}+1/p, \bar{\alpha}+1/p)$, respectively. These assumptions on the regularity of the truth are stronger than the corresponding assumptions in non-adaptive settings: $\beta>0$ under Sobolev regularity and $\beta>1/p$ under Besov regularity, see Lemmas \ref{lem:Lemma3.3} and \ref{lem:BBesov} below, respectively. The source of these more stringent conditions, is the fact that in the upper and lower bounds for the centered small ball probabilities with respect to $\alpha$-regular $\tau$-scaled $p$-exponential priors, it is impossible to choose the constants uniformly for all $\tau>0$ and $\alpha>0$; see Lemma \ref{lem:2.2b} below as well as Lemma \ref{lem:centeredconstants} in the Supplement below. Since the employed general theory of \cite{RS17} (naturally) requires a uniform control of the constants appearing in the small ball probability bounds with respect to the prior $\Pi(\cdot|\lambda)$ for all $\lambda$ in the set of candidate hyper-parameter values $\Lambda_n$, these assumptions appear to be unavoidable at present. Nevertheless, the additional restrictions are quite mild and do not obscure the main message of this article.
\item[b)] {More specifically, for fixed prior regularity $\alpha$ and varying scaling $\tau$, the upper and lower bounds for the centered small ball probabilities hold with uniform constants if $\gep\leq \tilde{C}\tau$, for any fixed $\tilde{C}>0$, see Lemma \ref{lem:2.2b} below. Our upper bounds on $\gep_n(\alpha,\tau)$ solving \eqref{eq:en}, suggest that adaptation is possible for truth regularity $\beta<\alpha+1/p$ by choosing $\tau=n^{(\alpha-\beta)/(1+2\beta)}$, see Lemma \ref{lem:Lemma3.3}(i) below. Hence, the smallest $\tau$ we need to consider is $n^{-1/(2+p+2\alpha p)}$ and the assumption $\beta\geq (1+\alpha p)/(p+2\alpha p)$ is imposed in order to ensure that $\gep_n(\alpha,\tau)\leq \tilde{C}\tau$ for all considered $\tau$. Note that for $p=2$ the assumption becomes $\beta\ge 1/2$, while as $p$ decreases it becomes more stringent. This is because, for a fixed truth regularity $\beta$, when $p$ is smaller adaptation can be achieved down to smaller prior regularities $\alpha$ via faster decaying choices of the scaling $\tau$. Hence, for a smaller $p$ we need to consider smaller values of $\tau$, which imposes a stricter assumption on $\beta$ so that $\gep_n(\alpha,\tau)\leq \tilde{C}\tau$.} 

\item[c)]In all three theorems, the set $\Lambda_0$ on which the MMLE concentrates is only implicitly defined in \eqref{eq:L0Mnint}. Upper bounds for $\gep_{n}(\lambda)$ and $\gep_{n,0}$ for Sobolev truths, can be found in  Lemmas \ref{lem:Lemma3.2} and \ref{lem:Lemma3.3} below (by \cite[Proposition 5.4]{ADH21} these bounds hold more generally for Besov truths with $q\ge2$). For Besov truths with $q<2$, upper bounds for $\gep_n(\lambda)$ are implicitly computed in the proof of \cite[Proposition 5.8]{ADH21} while bounds for $\gep_{n,0}$ are given in the statement of the same result. In general, the upper bounds on $\gep_n(\lambda)$ are not known to be sharp (see the brief discussion after \eqref{eq:lowerconc} in Subsection \ref{ssec:pexp}). Crude lower bounds based on the logarithmic-concavity of $p$-exponential measures are derived in Lemma \ref{lem:2.2b} below, however, there is a gap between the upper and lower bounds when the prior is too smooth compared to the truth. As a result, we cannot explicitly characterize $\Lambda_0$. For the same reason, we do not study lower bounds on the rate of contraction.
\item[d)] Explicitly identifying $\Lambda_0$ is not necessary for our results. In fact, similarly to \cite[Remark 3.4]{RS17} our empirical Bayes results are stronger than the minimax results stated in Theorems \ref{thm:Sobolev-tau}-\ref{thm:Besov}. The invoked Theorem 2.1 and Corollary 2.1  of \cite{RS17} in the proof of our results, show that the MMLE Bayes posterior contracts around the truth for every $\theta_0\in\Theta$ with rate $\gep_{n,0}(\theta_0)$ which can be faster than the minimax rate. A tighter understanding of $\Lambda_0$ is, however, required for the hierarchical result, in particular we need to identify a subset of $\Lambda_0$ with sufficient mass under the hyper-prior, see \cite[Section 2.2]{RS17} or assumption \eqref{eq:H1a} below. Due to the gap between the available upper and lower bounds we cannot identify such a subset for arbitrary sequences $M_n\to\infty$, but we can do so for $M_n$ such that $M_n\gep_{n,0}$ matches our optimized upper bounds on $\gep_n(\lambda)$ (for details see e.g. the proof of Theorem \ref{thm:Sobolev-tau} below). As a result, for the hierarchical posterior we can only invoke  \cite[Theorem 2.3]{RS17} with this choice of $M_n$, and thus can only establish the minimax results as stated in our theorems.
\end{enumerate}
\end{rem}


\section{Proofs of main results}\label{sec3}

\subsection{Notation and assumptions of the general contraction theory of \cite{RS17}}\label{sec:conditions}
In this subsection we briefly recall the assumptions of the general contraction theory for empirical and hierarchical Bayes priors of \cite{RS17}, which we employ in order to establish our main results. We formulate them specifically for the white noise model, and restrict our attention on the conditions which suffice for obtaining upper bounds on the rates of contraction.

Recall the definitions \eqref{eq:en}-\eqref{eq:L0Mnint} from Subsection \ref{ssec:mmle}.
Consider the transformation $\psi_{\lambda, \lambda'}: \Theta \mapsto \Theta$ for all $\lambda, \lambda' \in \Lambda_n$, which is such that if $\theta \sim \Pi(\cdot \mid \lambda)$ then $\psi_{\lambda, \lambda'}(\theta) \sim \Pi(\cdot \mid \lambda')$. For the different choices of hyper-parameter $\lambda$ studied in this article, the transformation of a $\theta=(\theta_j)_{j\in\N}\in\Theta$ can be expressed as $\psi_{\lambda,\lambda'}(\theta)=\big(\psi_{\lambda, \lambda'}(\theta_j)\big)_{j\in\N}$, where
\begin{equation}\label{eq:psi}
\psi_{\lambda, \lambda'}(\theta_j)=
\begin{cases}    \vspace{0.2cm}
  \frac{\tau'}{\tau}  \theta_{j}, & \text {for $\lambda=\tau$,}\\\vspace{0.2cm}
   j^{\alpha-\alpha'} \theta_{j}, & \text{for $\lambda=\alpha$,}\\
   \frac{\tau'}{\tau} j^{\alpha-\alpha'} \theta_{j}, & \text {for $\lambda=(\alpha, \tau)$}.
 \end{cases}\end{equation}
Given a loss function $\rho: \Lambda_n \times \Lambda_n \to \R^{+}$ and a sequence $u_n\to0$, introduce the notation 
\begin{equation}\label{eq:Q}
q_{\lambda, n}^{\theta} (X) = \sup_{\rho(\lambda, \lambda') \leq u_n} p_{\psi_{\lambda, \lambda'}(\theta)}^{n} (X), \quad X\in\mX
\end{equation}
and let $Q_{\lambda, n}^{\theta}$ be the associated measure. In our setting, the loss function will have the following three forms depending on the choice of hyper-parameter $\lambda$:
\begin{equation}\label{eq:rhodefn}
\rho(\lambda, \lambda')=
\begin{cases}    
   \mid \log \tau - \log \tau' \mid, & \text {for $\lambda=\tau$,}\\
    \mid \alpha - \alpha' \mid, & \text{for $\lambda=\alpha$,}\\
    \mid \log \tau - \log \tau' \mid + \mid \alpha - \alpha' \mid, & \text {for $\lambda=(\alpha, \tau)$}.
 \end{cases}\end{equation}
Additionally, denote by $N_n(\Lambda_0), N_n(\Lambda_n\setminus\Lambda_0)$ and $N_n(\Lambda_n)$ the covering numbers of $\Lambda_0, \Lambda_n\setminus \Lambda_0$ and $\Lambda_n$ by $\rho$-balls of radius $u_n$, respectively.

The following assumptions are employed in \cite[Theorem 2.1]{RS17} in order to show that given $\theta_0\in\Theta$, the MMLE $\hat{\lambda}_n$ belongs to $\Lambda_0=\Lambda_0(M_n)$ from \eqref{eq:L0Mnint}, with $P^n_{\theta_0}$-probability tending to 1 as $n\to\infty$. This is done by establishing that the marginal likelihood is small for $\lambda\in\Lambda_n\setminus\Lambda_0$.
\begin{itemize}
\item (A1) There exists $N>0$ such that for all $\lambda \in \Lambda_n \backslash \Lambda_0$ and $n\geq N$, there exists $\Theta_n(\lambda) \subset \Theta$ such that
\begin{equation}\label{eqA1.1}
\sup_{\{ \norm{\theta-\theta_0}_2 \leq K\gep_{n}(\lambda)\} \cap \Theta_{n}(\lambda) } \frac{\log Q_{\lambda, n}^{\theta}(\mX)}{n \gep_{n}^2 (\lambda)} = o(1),
\end{equation}
and 
\begin{equation}\label{eqA1.2}
\int_{\Theta_{n}(\lambda)^{c}} Q_{\lambda, n}^{\theta}(\mX) d\Pi(\theta \mid \lambda) \leq e^{-w_n^2 n \gep_{n,0}^2},
\end{equation}
for some positive sequence $w_n$ going to infinity. 

\item (A2) There exist $\zeta >0, c_1 <1$ such that for all $\lambda \in \Lambda_n \backslash \Lambda_0$ and all $\theta \in \Theta_{n}(\lambda)$, there exist tests $\varphi_{n}(\theta)$ such that
\begin{equation} \label{eqA2.1} 
 E^{n}_{\theta_{0}} \varphi_{n}(\theta)  \leq e^{-c_1 n \norm{\theta-\theta_{0}}^{2}_{2}}, \\ 
 \sup_{\substack{\theta' \in \pth(\lambda)  \\ \norm{\theta-\theta'}_{2}< \zeta \norm{\theta-\theta_{0}}_{2}}} \int_{\mX} \big(1-\varphi_{n}(\theta)\big) dQ^{\theta'}_{\lambda, n} (X) \leq e^{-c_1 n \norm{\theta-\theta_{0}}^{2}_{2}}.
 \end{equation}
 In addition, again for $\lambda\in\Lambda_n\setminus\Lambda_0$, it holds
\begin{equation} \label{eqA2.2} 
\log N\big(\zeta u, \{ u \leq \norm{\theta-\theta_0}_2 \leq 2u \} \cap \Theta_{n}(\lambda), \norm{\cdot}_2\big) \leq c_1 n u^2/2
\end{equation}
for all $u\geq K\gep_n(\lambda)$, where $K$ is as in \eqref{eq:en}.
\end{itemize}
In general, there are two additional conditions in \cite{RS17}, which in our assumed setting hold trivially. The first one (equation (2.8) in that source), compares neighbourhoods with respect to the norm of the parameter space $\Theta$ to neighbourhoods with respect to the metric used as loss in the definition of contraction rates. Since here both of these are given by the $L_2$-norm, this comparison is trivial (more specifically note that for $w_n=o(M_n)$ as we will choose later on and for $\lambda\in\Lambda_n\setminus\Lambda_0$, $c(\lambda)$ in (2.8) of \cite{RS17} is allowed to take the constant value $K$ from \eqref{eq:en}).  The second one (assumption (B1) in the same source), compares balls with respect to the norm of the parameter space (here $L_2$-norm) to Kullback-Leibler neighbourhoods. Since we are interested in the white noise model, \cite[Lemma 8.30]{GV17} shows that these two coincide, hence this assumption also holds trivially. Moreover, note that the assumption on the type-II error bound of the tests in \cite[Equation (2.7)]{RS17}, is formulated without restricting $\theta'$ on the sieve set $\Theta_n(\lambda)$. However, an inspection of the proof of \cite[Theorem 2.1]{RS17} where this assumption is used, immediately reveals that in all occurrences $\theta'$ does belong to $\Theta_n(\lambda)$, hence we can restrict the supremum on $\Theta_n(\lambda)$ as in \eqref{eqA2.1} above.

Having established that the MMLE belongs to $\Lambda_0$ with  $P^n_{\theta_0}$-probability tending to 1, the following additional assumptions are employed in \cite[Corollary 2.1]{RS17} to establish that the MMLE empirical Bayes posterior contracts at rate $M_n\gep_{n,0}$. This is done by controlling $\Pi(\norm{\theta-\theta_0}_2\leq M_n\gep_{n,0}|X^{(n)},\lambda)$ uniformly over $\lambda\in\Lambda_0$.
\begin{itemize}
\item (C1) For every $c_2>0$ there exists an $N>0$ such that for all $\lambda \in \Lambda_0$ and $n \geq N$, there exists $\Theta_n(\lambda)$ satisfying
\begin{equation}\label{eqC1}
\sup_{\lambda \in \Lambda_0} \int_{\Theta_{n}(\lambda)^{c}} Q_{\lambda, n}^{\theta}(\mX) d\Pi(\theta \mid \lambda) \leq e^{-c_2 n \gep_{n,0}^2}.
\end{equation}

\item (C2) There exist $\zeta <1$ and $c_1 >0$ such that for all $\lambda \in \Lambda_0$ and all $\theta \in \Theta_{n}(\lambda)$, there exist tests $\varphi_{n}(\theta)$ satisfying \eqref{eqA2.1} and \eqref{eqA2.2}, where \eqref{eqA2.2} is supposed to hold for all $u \geq M M_n \gep_{n,0}$ for some $M>0$.

\item (C3) There exists $C_0 > 0$ such that for all $\lambda \in \Lambda_0$ and for all $\theta \in \{ \norm{\theta-\theta_0}_2 \leq M_n \gep_{n,0}\} \cap \Theta_{n}(\lambda)$,
\begin{equation}\label{eqC3}
\sup_{\rho(\lambda, \lambda') \leq u_n} \norm{\theta-\psi_{\lambda, \lambda'}(\theta)}_2 \leq C_0 M_n \gep_{n,0}.
\end{equation}
\end{itemize}

The last set of additional assumptions is used in \cite[Theorem 2.3]{RS17} to show that the hierarchical Bayes approach attains the same rates, $M_n\gep_{n,0}$, as the empirical Bayes procedure. The idea here, is that the hyper-prior on $\lambda$ must charge the set $\Lambda_0$ of 'good values' of $\lambda$ with sufficient mass.
For a sequence $\tilde{w}_n$ satisfying $\tilde{w}_n=o(M_n\wedge w_n)$ where $w_n$ is as in (A1), denote by $\Lambda_0(\tilde{w}_n)$ the set from \eqref{eq:L0Mnint}.

\begin{itemize}
\item(H1) We assume that there exist $\tilde{\Lambda}_0 \subseteq \Lambda_0(\tilde{w}_n)$, such that for some sufficiently small $\bar{c}_0>0$, there exists $N>0$ such that
\begin{equation}\label{eq:H1a}
\int _{\tilde{\Lambda}_0} \tilde{\pi}(\lambda) d\lambda \gtrsim e^{-n \tilde{w}_n^2 \gep_{n,0}^2}
\end{equation}
and
\begin{equation}\label{eq:H1b}\int _{\Lambda_n^c} \tilde{\pi}(\lambda) d\lambda \leq e^{- \bar{c}_0 n \gep_{n,0}^2},\end{equation}
for all $n \geq N$.
\item(H2) Uniformly over $\lambda\in\tilde{\Lambda}_0$ (where $\tilde{\Lambda}_0$ is as in (H1) above) and $B_n:=\{\theta:\norm{\theta-\theta_0}_2\leq K\gep_n(\lambda)\}$, there exists $c_3>0$ such that 
\[P^n_{\theta_0}\Big\{\inf_{\rho(\lambda,\lambda')\leq u_n}\ell_n\big(\psi_{\lambda,\lambda'}(\theta)\big)-\ell_n\big(\theta_0\big)\leq -c_3 n\eps_n(\lambda)^2\Big\}= O\big(e^{-n\eps_{n,0}^2}\big).\]
\end{itemize}
{For sieve sets $\Theta_n(\lambda)$ for which there exists a constant $R>1$ such that
\begin{equation}\label{eq:aux}\Pi(B_n|\lambda)\le R\Pi\big(B_n\cap\Theta_n(\lambda)|\lambda)\big), \quad \forall \lambda\in\Lambda_n,\end{equation} condition (H2) with $\theta$ restricted on $B_n\cap \Theta_n(\lambda)$ is sufficient.} Indeed, (H2) is used in the proof of \cite[Theorem 2.3]{RS17}  to upper bound the posterior probability of $B_n^c$ uniformly for $\lambda\in\Lambda_0(M_n)$ {(the denominator in the posterior is lower bounded by an integral over $B_n$)}, by referring to the proof of \cite[Theorem 1]{DRRS18} where the latter theorem's assumptions are satisfied with $\tilde{B}_n=B_n$. It is straightforward to check that under \eqref{eq:aux}, the latter assumptions are also satisfied for $\tilde{B}_n=B_n\cap\Theta_n(\lambda)$, hence the proof of \cite[Theorem 2.3]{RS17} is still valid when restricting (H2) on $B_n\cap\Theta_n(\lambda)$.
Finally, note that \eqref{eq:H1a} is slightly weaker than the first part of (H1) of \cite{RS17}, which has $\exp(-n\gep_{n,0}^2)$ on the right hand side. An inspection of the proof of \cite[Theorem 2.3]{RS17}, in particular of the last two displays where this assumption is used in order to show that the hyper-posterior $\tilde{\pi}(\lambda|X^{(n)})$ has little mass outside $\Lambda_0(M_n)$, immediately reveals that \eqref{eq:H1a} is sufficient.

We thus summarize the general results of \cite{RS17}, in the white noise model setting:

\begin{cor}[Theorem 2.1, Corollary 2.1 and Theorem 2.3 of \cite{RS17} in the white noise model]\label{corRS17}\quad\\
Let $M_n$ tend to infinity arbitrarily slowly.
\begin{itemize}
\item[i)] Assume that there exists $K>0$ such that conditions (A1) and (A2) hold with $w_n=o(M_n)$. Then if $\log N_n(\Lambda_n\setminus \Lambda_0)=o(nw_n^2\gep^2_{n,0})$, 
\[\lim_{n\to\infty}P^n_{\theta_0}\big(\hat{\lambda}_n\in\Lambda_0(M_n)\big)=1.\]
\item[ii)] Assume that $\hat{\lambda}_n\in\Lambda_0(M_n)$ with probability going to 1 under $P^n_{\theta_0}$ and that conditions (C1)-(C3) hold. Then if $\log N_n(\Lambda_0)\leq O(n\gep_{n,0}^2)$, there exists $M>0$ such that the empirical Bayes posterior satisfies
\[E^n_{\theta_0}\Pi\Big(\theta\,:\,\norm{\theta-\theta_0}_2 \ge M M_n\gep_{n,0}|X^{(n)}, \hat{\lambda}_n\Big)=o(1).\]
\item[iii)] Assume that the conditions in the previous two items hold and consider a hyper-prior on $\lambda$ which satisfies the conditions (H1) and (H2) with $\tilde{w}_n=o(M_n\wedge w_n)$. Then there exists $M>0$ such that the hierarchical posterior satisfies
\[E^n_{\theta_0}\Pi\Big(\theta\,:\,\norm{\theta-\theta_0}_2 \ge MM_n\gep_{n,0} | X^{(n)}\Big)=o(1).\]
\end{itemize}
\end{cor}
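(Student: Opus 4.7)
The proof is essentially a translation exercise: parts (i), (ii), (iii) correspond respectively to Theorem 2.1, Corollary 2.1 and Theorem 2.3 of \cite{RS17}, and the strategy is to verify that the conditions as stated here subsume those of the general theory once specialized to the white noise model. I would therefore enumerate the assumptions of \cite{RS17} that are \emph{not} listed in (A1)--(A2), (C1)--(C3), (H1)--(H2), and check that each holds automatically in our setting; once this reduction is in place, the three conclusions follow by direct invocation of the corresponding general results.

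First, \cite{RS17} invokes an $L_2$-to-loss metric comparison condition (equation (2.8) in that source) which, because both the loss defining contraction and the parameter-space norm are the $L_2$-norm on $\Theta$, is vacuous here: for $\lambda\in\Lambda_n\setminus\Lambda_0$ and $w_n=o(M_n)$ one may take the constant $c(\lambda)$ in (2.8) of \cite{RS17} to equal the constant $K$ of \eqref{eq:en}. Second, assumption (B1) of \cite{RS17}, which compares $L_2$-balls to Kullback--Leibler neighbourhoods, is trivial in our model because by \cite[Lemma 8.30]{GV17} the relevant Kullback--Leibler divergence and variance functional are explicit multiples of $n\|\theta-\theta_0\|_2^2$. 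Third, the type-II error bound in (2.7) of \cite{RS17} is formulated without restricting $\theta'$ to the sieve $\Theta_n(\lambda)$, but an inspection of the proof of Theorem 2.1 there shows that every use of this bound occurs with $\theta'\in\Theta_n(\lambda)$, so the restricted form \eqref{eqA2.1} is sufficient.

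Fourth, for the hierarchical part, the first line of (H1) in \cite{RS17} asks for hyper-prior mass at least $e^{-n\gep_{n,0}^2}$ on a subset of $\Lambda_0(\tilde{w}_n)$, whereas \eqref{eq:H1a} asks only for mass at least $e^{-n\tilde{w}_n^2\gep_{n,0}^2}$. This weakening is legitimate because this bound is used only, in the last two displays of the proof of \cite[Theorem 2.3]{RS17}, to argue that the hyper-posterior mass of $\Lambda_0(M_n)^c$ is negligible, and the extra factor $\tilde{w}_n^2$ is absorbed by the condition $\tilde{w}_n=o(M_n\wedge w_n)$. Similarly, restricting (H2) to $B_n\cap\Theta_n(\lambda)$ is justified under \eqref{eq:aux}, since (H2) enters the proof of \cite[Theorem 2.3]{RS17} through the proof of \cite[Theorem 1]{DRRS18}, where the denominator of the posterior is lower-bounded via $\Pi(B_n|\lambda)$; by \eqref{eq:aux}, this is comparable to $\Pi(B_n\cap\Theta_n(\lambda)|\lambda)$ up to the constant $R$, so the argument carries through with the restricted version.

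With these verifications, parts (i), (ii) and (iii) follow verbatim from Theorem 2.1, Corollary 2.1 and Theorem 2.3 of \cite{RS17}, respectively. No new analytic work is needed at the level of this corollary; the only mild obstacle is the bookkeeping of the interplay between the sequences $M_n$, $w_n$, $\tilde{w}_n$ and $\gep_{n,0}$ across the three invocations, which must be tracked carefully so that the hypotheses of each general result are cleanly fed by those of the previous one.
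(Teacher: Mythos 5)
Your proposal is correct and follows essentially the same route as the paper: the corollary is justified exactly by the reductions you list (triviality of (2.8) and (B1) of \cite{RS17} in the white noise model, restriction of $\theta'$ to the sieve in the type-II error bound, the weakened form \eqref{eq:H1a} of (H1), and the restriction of (H2) to $B_n\cap\Theta_n(\lambda)$ under \eqref{eq:aux}), after which the three parts follow by direct invocation of Theorem 2.1, Corollary 2.1 and Theorem 2.3 of \cite{RS17}. This matches the paper's own discussion in Subsection \ref{sec:conditions} point for point.
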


\subsection{Bounds on $\gep_n(\lambda)$ for $p$-exponential priors}

In Lemma \ref{lem:existenceenl} in the Supplement below, it is shown that for $\alpha$-regular $\tau$-scaled $p$-exponential priors, there exists a unique solution $\gep_n=\gep_n(\alpha,\tau; K,\theta_0)$ to equation \eqref{eq:en}, which is strictly decreasing to zero. [In fact the same proof works for any 'continuous' prior supported in $\Theta$ {(more precisely, any prior which puts positive mass on all balls in $\Theta$ but no mass on any sphere in $\Theta$)}.]  Hereafter, we will suppress the dependence of $\gep_n$ on $K$ and $\theta_0$ in the notation. In this subsection we derive bounds for $\gep_n$ based on the concentration function defined in \eqref{defn:concfcn}.

The first basic result, associates the concentration function to $\gep_n$ and establishes a (naive) lower bound which holds for all $\theta_0\in\Theta$.

\begin{lem}\label{lem:2.2b}
Fix $K>0$, let $\at>0$ and $n \geq 1$, and consider $\gep_{n} = \gep_{n}(\at)$ as defined in \eqref{eq:en}. 
Then $\gep_{n}$ satisfies 
\begin{equation} \label{lem2.2_2}
\varphi_{0}(K\gep_{n}) \leq n\gep_{n}^{2} \leq \varphi_{\theta_0}\Big(\frac{K\gep_n}{2}\Big),
\end{equation}
where $\varphi_{\theta}(\cdot)$ is the concentration function of $\Pi(\cdot \mid \at)$ at $\theta \in \Theta$, as defined in \eqref{defn:concfcn}.

Furthermore, for any $\tilde{C}>0$, there exists $\tilde{c}_1=\tilde{c}_1(\alpha, \tilde{C})$ such that for all $\gep, \tau>0$ satisfying $\gep\leq \tilde{C}\tau$, it holds
\begin{equation} \label{lem2.2_3}
\tilde{c}_1^{-1} \Big(\frac{K\gep}{\tau}\Big)^{-1/\alpha} \leq \varphi_{0}(K\gep) \leq \tilde{c}_1 \Big(\frac{K\gep}{\tau}\Big)^{-1/\alpha}.
\end{equation}
The constant $\tilde{c}_1$  can be chosen uniformly for $\alpha>0$ in any bounded interval $[\ubar{\alpha}, \bar{\alpha}]$ with $\bar{\alpha}>\ubar{\alpha}>0$.

In particular, the above bounds imply that 
\begin{equation} \label{lem2.2_4}
n \gep_{n}^{2} \geq \tilde{c}_1^{-1} \Big(\frac{K}{\tau}\Big)^{-1/\alpha} \gep_{n}^{-1/\alpha},
\end{equation}
and
\begin{equation} \label{lem2.2_5}
\gep_{n} \geq (\tilde{c}_1^{-1} K^{-1/\alpha})^{\frac{\alpha}{1+2\alpha}} \tau^{\frac{1}{1+2\alpha}} n^{-\frac{\alpha}{1+2\alpha}},
\end{equation}
provided $\gep_n\leq \tilde{C}{\tau}$.
\end{lem}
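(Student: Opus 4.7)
\textbf{Proof plan for Lemma \ref{lem:2.2b}.} The plan is to prove the four displayed inequalities sequentially, deriving the later ones from the earlier.

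For \eqref{lem2.2_2}, I would start from the defining relation \eqref{eq:en}, which gives $\Pi(\|\theta-\theta_0\|_2\le K\gep_n\mid\lambda)=e^{-n\gep_n^2}$. The upper bound $n\gep_n^2\le\varphi_{\theta_0}(K\gep_n/2)$ is then immediate from the concentration-function lower bound \eqref{eq:lowerconc} applied with $\gep=K\gep_n$: taking logarithms on both sides gives exactly the claim. For the lower bound $\varphi_0(K\gep_n)\le n\gep_n^2$, I would use Anderson's inequality for the log-concave $p$-exponential prior $\Pi(\cdot\mid\lambda)$, which yields $\Pi(\theta_0+K\gep_n B_{\ell_2}\mid\lambda)\le\Pi(K\gep_n B_{\ell_2}\mid\lambda)$. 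Since the infimum in \eqref{defn:concfcn} vanishes when $\theta=0$ (take $h=0$), we have $\varphi_0(\gep)=-\log\Pi(\gep B_{\ell_2}\mid\lambda)$, and combining with the previous inequality produces $e^{-n\gep_n^2}\le e^{-\varphi_0(K\gep_n)}$, as required.

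For \eqref{lem2.2_3}, the strategy is to reduce to the unit-scale case via the scaling invariance of the prior. If $\theta\sim\Pi(\cdot\mid\alpha,\tau)$ then $\theta/\tau\sim\Pi(\cdot\mid\alpha,1)$, so $\Pi(\gep B_{\ell_2}\mid\alpha,\tau)=\Pi\bigl((\gep/\tau)B_{\ell_2}\mid\alpha,1\bigr)$, and hence $\varphi_0(K\gep)=-\log\Pi\bigl((K\gep/\tau)B_{\ell_2}\mid\alpha,1\bigr)$. The required two-sided bound then amounts to the known small-ball estimate
\[
\tilde c_1^{-1}\delta^{-1/\alpha}\le -\log\Pi(\delta B_{\ell_2}\mid\alpha,1)\le\tilde c_1\delta^{-1/\alpha}
\]
valid for all $\delta\in(0,\tilde C]$, which is derived in \cite{ADH21} (and will also be sharpened in the Supplement via Lemma \ref{lem:centeredconstants}). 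The main technical point, and the place requiring real care, is tracking that the implicit constants can be chosen uniformly for $\alpha$ in a bounded interval $[\ubar\alpha,\bar\alpha]$; I would handle this by inspecting the dependence of the coefficient-wise bounds on $\alpha$ in the proofs of the cited small-ball results, noting that the exponents in $\ell^{1/2+\alpha}$ are controlled above and below on $[\ubar\alpha,\bar\alpha]$, and that the hypothesis $\delta\le\tilde C$ prevents the bound from degenerating at large arguments.

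The bound \eqref{lem2.2_4} is then obtained by chaining the two inequalities just proved: the lower part of \eqref{lem2.2_2} combined with the lower part of \eqref{lem2.2_3} applied at $\gep=\gep_n$ (which is admissible because $\gep_n\le\tilde C\tau$ by hypothesis) yields $n\gep_n^2\ge\tilde c_1^{-1}(K\gep_n/\tau)^{-1/\alpha}$. Finally, \eqref{lem2.2_5} follows by elementary algebra: rearranging \eqref{lem2.2_4} gives $\gep_n^{2+1/\alpha}\ge\tilde c_1^{-1}K^{-1/\alpha}\tau^{1/\alpha}n^{-1}$, and raising both sides to the power $\alpha/(1+2\alpha)$ produces the claimed explicit lower bound on $\gep_n$. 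The only nontrivial step in this sequence is the uniformity-in-$\alpha$ assertion in \eqref{lem2.2_3}; the rest is bookkeeping using the definition of the concentration function, Anderson's inequality for log-concave measures, and the scaling property of $p$-exponential priors.
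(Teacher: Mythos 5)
Your proposal is correct and follows essentially the same route as the paper: the two-sided bound \eqref{lem2.2_2} via the concentration-function lower bound \eqref{eq:lowerconc} and Anderson's inequality, the centered small-ball estimate \eqref{lem2.2_3} via the scaling reduction and the uniform-in-$\alpha$ constants of Lemma \ref{lem:centeredconstants}, and then \eqref{lem2.2_4}--\eqref{lem2.2_5} by chaining and elementary algebra. The paper delegates the uniformity issue entirely to the Supplement's Lemma \ref{lem:centeredconstants} (built on \cite{FA07}), which is exactly the "real care" step you correctly flag.
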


\begin{proof}
By \cite[Theorem 2.13]{ADH21} we have that
$$\Pi (\theta: \norm{\theta-\theta_0}_2 \leq K\gep_{n} \mid \at) \geq e^{-\varphi_{\theta_0}(\frac{K\gep_n}{2})},$$
and by Anderson's inequality, see \cite[Proposition 2.4]{ADH21}, and the definition of the concentration function, we have that
$$\Pi (\theta: \norm{\theta-\theta_0}_2 \leq K\gep_{n} \mid \at) \leq \Pi (\theta: \norm{\theta}_2 \leq K\gep_n \mid \at) = e^{-\varphi_{0}(K\gep_n)}.$$
Combining these bounds with \eqref{eq:en} we get \eqref{lem2.2_2}. Using Lemma \ref{lem:centeredconstants} in the Supplement below (which follows from \cite[Theorem 4.2]{FA07} and other related work of the same author) we also get \eqref{lem2.2_3}.

The expressions in \eqref{lem2.2_4} and \eqref{lem2.2_5} follow successively using \eqref{lem2.2_2}, the lower bound in \eqref{lem2.2_3} and simple computations.
\end{proof}

We next employ Lemma \ref{lem:2.2b} to derive explicit upper bounds on $\gep_n(\at)$, depending on the regularity of the truth $\theta_0$. 

\begin{lem}\label{lem:Lemma3.2} Assume that $\Pi(\cdot \mid \at)$, $\alpha>0, \tau>0$, is an $\alpha$-regular $\tau$-scaled $p$-exponential prior in $\Theta$. Then, we have the following upper bounds for $\gep_n(\at)$ defined in \eqref{eq:en}, depending on the regularity of $\theta_0$:
\begin{enumerate}
\item[i)] For $\theta_0 \in H^{\beta}$, $\beta>0$, we have
\begin{equation}\label{gepn_t} \gep_{n}(\at)  \lesssim \tilde{\gep}_n(\at) :=
\begin{cases}    
   n^{-\frac{\alpha}{1+2\alpha}} \tau^{\frac{1}{1+2\alpha}} + (n\tau^{p})^{\frac{\beta}{\beta(p-2)-\alpha p-1}}, & \text {$\beta < \alpha + 1/p$}\\
    n^{-\frac{\alpha}{1+2\alpha}} \tau^{\frac{1}{1+2\alpha}} + \frac{1}{\sqrt{n\tau^{p}}}, & \text{$\beta > \alpha + 1/p$}\\
    n^{-\frac{\alpha}{1+2\alpha}} \tau^{\frac{1}{1+2\alpha}} + \frac{1}{\sqrt{n\tau^{p}}} \Big[\log(\sqrt{n\tau^{p}})\Big]^{\frac{1}{2}-\frac{p}{4}}, & \text {$\beta = \alpha + 1/p$}.
 \end{cases}\end{equation}
 \item[ii)]For $\theta_0\in B^\beta_{qq}$, where $\beta\ge 1/p$, $1\leq q<2$ and $p\leq q$, we have
 \begin{equation}\label{gepn_tBesov} \gep_{n}(\at)  \lesssim \tilde{\gep}_n(\at) :=
\begin{cases}    
   n^{-\frac{\alpha}{1+2\alpha}} \tau^{\frac{1}{1+2\alpha}} + (n\tau^{p})^{-\frac{2\beta q+q-2}{4\beta q+4q-4-2\beta pq+2\alpha pq}}, & \text {$\beta < \alpha + 1/p$}\\
    n^{-\frac{\alpha}{1+2\alpha}} \tau^{\frac{1}{1+2\alpha}} + \frac{1}{\sqrt{n\tau^{p}}}, & \text{$\beta > \alpha + 1/p$}\\
    n^{-\frac{\alpha}{1+2\alpha}} \tau^{\frac{1}{1+2\alpha}} + \frac{1}{\sqrt{n\tau^{p}}} \Big[\log(\sqrt{n\tau^{p}})\Big]^{\frac{q-p}{2q}}, & \text {$\beta = \alpha + 1/p$}.
 \end{cases}\end{equation}
 \end{enumerate}
 The constants in the above bounds depend on $\theta_0$ only through the norms $\norm{\theta_0}_{H^\beta}$ and $\norm{\theta_0}_{B^\beta_{qq}}$, respectively. Furthermore, the constants can be chosen uniformly over $\tau$ such that $\gep_n/\tau$ is bounded.
\end{lem}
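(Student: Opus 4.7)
My plan is to invoke the upper inequality $n\gep_n^2 \leq \varphi_{\theta_0}(K\gep_n/2)$ from Lemma \ref{lem:2.2b}. Together with the fact that $g(\gep) := n\gep^2 - \varphi_{\theta_0}(K\gep/2)$ is strictly increasing in $\gep$ (the first term is increasing, the second decreasing), it suffices to exhibit $\tilde{\gep}_n = \tilde{\gep}_n(\alpha,\tau)$ with $n\tilde{\gep}_n^2 \geq \varphi_{\theta_0}(K\tilde{\gep}_n/2)$; then $\gep_n(\alpha,\tau) \leq \tilde{\gep}_n(\alpha,\tau)$. By \eqref{defn:concfcn}, the concentration function splits into the centered small-ball term, which by \eqref{lem2.2_3}--\eqref{lem2.2_5} already contributes the summand $n^{-\alpha/(1+2\alpha)}\tau^{1/(1+2\alpha)}$, and the approximation term $\inf_{h \in \omat:\,\|h-\theta_0\|_2 \leq K\tilde{\gep}_n/2} \|h\|_{\omat}^p$. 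The bulk of the work is therefore to exhibit an $h$ close to $\theta_0$ in $\ell_2$ with controlled $\omat$-norm, and to balance the two resulting constraints.

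\textbf{Truncation and H\"older.} I take $h$ to be the truncation of $\theta_0$ at some level $N\in\N$. In the Sobolev case the direct estimate $\|h-\theta_0\|_2 \lesssim N^{-\beta}\|\theta_0\|_{H^\beta}$ applies. In the Besov case with $1\le q<2$, the continuous embedding $B^\beta_{qq} \hookrightarrow H^{\beta-1/q+1/2}$ (valid since $\beta\ge 1/p\ge 1/q$) gives $\|h-\theta_0\|_2 \lesssim N^{-(\beta+1/2-1/q)}\|\theta_0\|_{B^\beta_{qq}}$. For the $\omat$-norm, I write
\[
\|h\|_{\omat}^p = \tau^{-p}\sum_{\ell\le N}|\theta_{0,\ell}|^p\ell^{p/2+\alpha p},
\]
pair $|\theta_{0,\ell}|^p$ with the weight that reconstructs the relevant smoothness norm, and apply H\"older with conjugate exponents $(2/p,\,2/(2-p))$ in the Sobolev case or $(q/p,\,q/(q-p))$ in the Besov case. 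A direct computation shows that the residual series $\sum_{\ell\le N}\ell^{r}$ has exponent $r=-1$ \emph{precisely} at the threshold $\beta=\alpha+1/p$, producing the three regimes in \eqref{gepn_t}--\eqref{gepn_tBesov}: a contribution bounded independently of $N$ when $\beta>\alpha+1/p$; polynomial growth $N^{1+\alpha p-\beta p}$ when $\beta<\alpha+1/p$; a $\log N$ factor in the critical case. In each of the Sobolev and Besov settings the exponent $1+\alpha p - \beta p$ of $N$ coincides, so the critical threshold is shared across (i) and (ii).

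\textbf{Combining and main obstacle.} Substituting the relation $N\asymp \tilde{\gep}_n^{-1/s}$ (with $s=\beta$ in (i) and $s=\beta+1/2-1/q$ in (ii)) into the $\omat$-norm bound and enforcing $n\tilde{\gep}_n^2 \gtrsim \tau^{-p}\cdot(\text{residual in }\tilde{\gep}_n)$ yields a scalar algebraic inequality; solving in each of the three regimes produces the second summand in each formula of \eqref{gepn_t}--\eqref{gepn_tBesov} (in particular the exponent $-(2\beta q+q-2)/(4\beta q+4q-4-2\beta pq+2\alpha pq)$ in the Besov sub-critical case emerges from this balancing). The principal subtlety, and the source of the last assertion in the lemma, is that the centered small-ball bound \eqref{lem2.2_3} only has constants independent of $\tau$ in the range $\gep\le\tilde{C}\tau$. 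One must therefore check that the proposed $\tilde{\gep}_n$ satisfies $\tilde{\gep}_n\le\tilde{C}\tau$ within the range of $\tau$ one is willing to consider; this is exactly the qualifier ``$\gep_n/\tau$ bounded'' and it ultimately drives the lower endpoint $\tau\ge n^{-1/(2+p+2\alpha p)}$ appearing in the candidate sets of Theorems \ref{thm:Sobolev-tau} and \ref{thm:Besov}. The linear dependence on $\|\theta_0\|_{H^\beta}^p$ or $\|\theta_0\|_{B^\beta_{qq}}^p$ is preserved throughout the algebra, yielding the stated dependence on $\theta_0$.
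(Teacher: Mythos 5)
Your proposal is correct and takes essentially the same route as the paper: it passes from \eqref{eq:en} to the concentration function via the second inequality in \eqref{lem2.2_2}, controls the centered small-ball term through \eqref{lem2.2_3} (with the same caveat that uniformity of constants requires $\gep_n/\tau$ bounded), bounds the approximation term by truncating $\theta_0$ and applying H\"older with the exponents you indicate, and then solves the resulting scalar inequality in the three regimes. The only difference is that the paper outsources the truncation-plus-H\"older estimate to \cite[Lemma 5.13]{ADH21} instead of re-deriving it, and your intermediate bounds reproduce exactly the display \eqref{concfcn_t} obtained there.
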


\begin{proof}
Both cases can be treated simultaneously. Using the definition of the concentration function of $\Pi(\cdot \mid \at)$, cf. \eqref{defn:concfcn}, together with \eqref{lem2.2_3} and Lemma 5.13 of \cite{ADH21} (applied with $q\leq 2$ and $p\leq q$), we obtain that
\begin{equation}\label{concfcn_t} 
\varphi_{\theta_{0}}(\gep_{n}) \lesssim 
\begin{cases}    
    \tau^{-p} \gep_{n}^{2q\frac{\beta p - \alpha p -1}{2\beta q+q-2}} + \gep_{n}^{-\frac{1}{\alpha}} \tau^{\frac{1}{\alpha}}, & \text {$\beta < \alpha + 1/p$}\\
    \tau^{-p} + \gep_{n}^{-\frac{1}{\alpha}} \tau^{\frac{1}{\alpha}}, & \text {$\beta > \alpha +1/p$}\\
        \tau^{-p} \Big[\log \big( \frac{1}{\gep_{n}} \big)\Big]^{\frac{q-p}{q}} + \gep_{n}^{-\frac{1}{\alpha}} \tau^{\frac{1}{\alpha}}, & \text{$\beta = \alpha + 1/p$},
 \end{cases}\end{equation}
where the constant in the above upper bound depends on $\theta_0$ only through the norm  $\norm{\theta_0}_{B^\beta_{qq}}$, and can be chosen uniformly over $\tau$ such that $\gep_n/\tau$ is bounded.
The claimed upper bounds in \eqref{gepn_t} can thus be obtained using (the second inequality in) \eqref{lem2.2_2}. The calculations for $\beta\neq\alpha+1/p$ are straightforward, while in the case $\beta=\alpha+1/p$ one can solve $n\gep_{n}^{2}\leq\tau^{-p} \big[\log \big(1/\gep_{n}\big)\big]^{(q-p)/q}$ using \cite[Lemma G.2]{ADH21}.
\end{proof}


We next optimize the upper bounds $\tilde{\gep}_n(\at)$ of $\gep_n(\at)$ from the first part of the last lemma, over $\tau$ and $\alpha$, separately. We thus derive upper bounds for $\gep_{n,0}$ defined in \eqref{defn:en0}, for a Sobolev truth.

\begin{lem}\label{lem:Lemma3.3} Assume that $\Pi(\cdot \mid \at), \alpha>0, \tau>0$ is an $\alpha$-regular $\tau$-scaled $p$-exponential prior in $\Theta$ and that $\theta_0 \in H^{\beta}$, $\beta>0$. The upper bounds $\tilde{\gep}_n(\alpha,\tau)$ from Lemma \ref{lem:Lemma3.2} are optimized over the choices of $\tau$ and $\alpha$ as follows:
\begin{enumerate}
\item[i)] Given fixed $\alpha>0$, it holds \[\tilde{\gep}_n(\at)\gtrsim \bar{\gep}_n:=\tilde{\gep}_n(\alpha,\tau_0), \;\;\text{for all}\;\;{\tau>0},\]
where
\begin{equation}\label{gepn0_t} 
\tilde{\gep}_n(\alpha,\tau_0) \asymp
\begin{cases}    
    \mathrlap{n^{-\frac{\beta}{1+2\beta}}}\hphantom{n^{-\frac{\beta}{1+2\beta}} \big(\log{n}\big)^{\frac{2-p}{2p(1+2\beta)}}}\quad \text{for} \quad \tau_0 = n^{\frac{\alpha-\beta}{1+2\beta}}, &\text{$\beta < \alpha + 1/p$}\\
      \mathrlap{n^{-\frac{1+\alpha p}{2+p(1+2\alpha)}}}\hphantom{n^{-\frac{\beta}{1+2\beta}} \big(\log{n}\big)^{\frac{2-p}{2p(1+2\beta)}}}\quad \text{for} \quad \tau_0 = n^{-\frac{1}{2+p(1+2\alpha)}},  &\text{$\beta > \alpha + 1/p$}\\
    n^{-\frac{\beta}{1+2\beta}} \big(\log{n}\big)^{\frac{2-p}{2p(1+2\beta)}} \quad\text{for} \quad \tau_0 =s_n^{\frac{1}{\beta p}} \Big[\log \Big( \frac{1}{s_{n}} \Big)\Big]^{\frac{(2-p) (\beta p-1)}{2 \beta p^{2}}},  &\text {$\beta = \alpha + 1/p$},
 \end{cases}\end{equation}
and where $s_n:=\tilde{\gep}_n(\beta-1/p,\tau_0)\asymp n^{-\beta/(1+2\beta)} \big(\log{n}\big)^{(2-p)/ [2p(1+2\beta)]}$.
\item[ii)] For fixed $\tau=1$, it holds
\begin{equation}\label{gepn0_a} 
\tilde{\gep}_n(\alpha,1)\gtrsim \bar{\gep}_n:=\tilde{\gep}_n(\beta,1)
, \;\;\text{for all}\;\;{\alpha>0},
\end{equation}
where \[\tilde{\gep}_n(\beta,1)\asymp \minimax,\] for $\minimax$ the (global) minimax rate defined in \eqref{eq:minimax}.
\end{enumerate}
The constants in the above bounds depend on $\theta_0$ only through the norm $\norm{\theta_0}_{H^\beta}$.
\end{lem}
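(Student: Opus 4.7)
The plan is to view the upper bound $\tilde{\gep}_n(\alpha,\tau)$ supplied by Lemma \ref{lem:Lemma3.2}(i) as the sum of a strictly increasing and a strictly decreasing function of the free variable ($\tau$ in part (i) with $\alpha$ fixed; $\alpha$ in part (ii) with $\tau=1$), and to locate its balance point. The claim in each case then follows from the elementary observation that if $A(\cdot)+B(\cdot)$ is a sum of monotone functions with opposite monotonicity whose graphs cross at $x_0$, then $A(x)+B(x)\gtrsim A(x_0)\asymp B(x_0)$ for every $x$, since either $x\ge x_0$ (forcing $A(x)\ge A(x_0)$) or $x\le x_0$ (forcing $B(x)\ge B(x_0)$).

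For part (i), fix $\alpha>0$ and write $\tilde{\gep}_n(\alpha,\tau)=A(\tau)+B(\tau)$, where $A(\tau)=n^{-\alpha/(1+2\alpha)}\tau^{1/(1+2\alpha)}$ is increasing and $B(\tau)$ is a negative power of $n\tau^p$ (possibly with a log correction), hence strictly decreasing. In the two power-law regimes the balance equation $A(\tau_0)=B(\tau_0)$ is a monomial equation in $\tau_0$; elementary algebra yields $\tau_0=n^{(\alpha-\beta)/(1+2\beta)}$ with $A(\tau_0)\asymp B(\tau_0)\asymp n^{-\beta/(1+2\beta)}$ when $\beta<\alpha+1/p$, and $\tau_0=n^{-1/(2+p+2\alpha p)}$ with $A(\tau_0)\asymp B(\tau_0)\asymp n^{-(1+\alpha p)/(2+p+2\alpha p)}$ when $\beta>\alpha+1/p$, matching the claimed rates in \eqref{gepn0_t}.

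The main obstacle is the critical regime $\beta=\alpha+1/p$, where the factor $[\log\sqrt{n\tau^p}]^{(2-p)/4}$ in $B(\tau)$ makes the balance equation transcendental. I would handle it by the same iterative device used in Lemma \ref{lem:Lemma3.2}, invoking \cite[Lemma G.2]{ADH21}: first ignore the log factor to obtain the zeroth-order guess $\tau_0^{(0)}\asymp n^{-1/(p(1+2\beta))}$ producing the polynomial part $s_n\asymp n^{-\beta/(1+2\beta)}$ of the rate; then substitute back, so that $\log\sqrt{n\tau^p}\asymp\log(1/s_n)\asymp \log n$, solve the resulting monomial equation to arrive at the expression for $\tau_0$ stated in the lemma, and verify by direct substitution that $A(\tau_0)\asymp B(\tau_0)\asymp n^{-\beta/(1+2\beta)}(\log n)^{(2-p)/[2p(1+2\beta)]}$.

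For part (ii), set $\tau=1$ and view $\tilde{\gep}_n(\alpha,1)$ as a function of $\alpha>0$. At $\alpha=\beta$ one is in the regime $\beta<\alpha+1/p$ and direct substitution gives $\tilde{\gep}_n(\beta,1)\asymp n^{-\beta/(1+2\beta)}=\minimax$. For the matching lower bound valid for every $\alpha>0$, it suffices to split on whether $\alpha\ge\beta$ or $\alpha<\beta$. When $\alpha\ge\beta$, one remains in the first regime and the second term has exponent $-\beta/((\alpha-\beta)p+1+2\beta)\ge -\beta/(1+2\beta)$, so already this term is $\gtrsim \minimax$. When $\alpha<\beta$, regardless of which of the three regimes of Lemma \ref{lem:Lemma3.2}(i) we fall into, the first term $n^{-\alpha/(1+2\alpha)}$ exceeds $\minimax$ because $x\mapsto x/(1+2x)$ is strictly increasing, and any extra log factor in the critical case only reinforces the bound.
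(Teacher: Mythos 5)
Your proposal is correct and follows essentially the same route as the paper: balance the increasing-in-$\tau$ and decreasing-in-$\tau$ terms of $\tilde{\gep}_n(\alpha,\tau)$ (respectively compare $\alpha$ to $\beta$ when $\tau=1$), with the critical case $\beta=\alpha+1/p$ handled via the self-consistent iteration of \cite[Lemma G.2]{ADH21}. The paper performs that iteration by first balancing the two terms of the concentration-function bound \eqref{concfcn_t} and then solving $u_{\theta_0}(\gep_n,\tau_0)=n\gep_n^2$, whereas you balance the final bound directly and bootstrap the logarithmic factor, but these are the same computation in a different order and lead to the same $\tau_0$ and rate.
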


\begin{proof}
Fix $\alpha>0$ and notice that the upper bounds in Lemma \ref{lem:Lemma3.2} (in all three cases) consist of two terms, one  increasing and one decreasing in $\tau$. We can thus optimize the choice of $\tau$, by balancing the respective terms. For $\beta\neq\alpha+1/p$, a straightforward calculation shows that the two terms are balanced for the stated choices of $\tau$, $\tau_0$, resulting in the claimed bounds. The same holds for $\beta=\alpha+1/p$ and $p=2$, in which case the upper bound in \eqref{gepn_t} is the same as for $\beta>\alpha+1/p$. 
The case $\beta = \alpha + 1/p$ with $1\leq p<2$, is more complicated and we slightly alter the order of the steps of the proof: we first determine $\tau_0$ as the $\tau$ balancing the two terms in the upper bound, subsequently denoted by $u_{\theta_0}(\gep_n,\tau)$, on the concentration function from \eqref{concfcn_t}. We then solve $u_{\theta_{0}}(\gep_{n}, \tau_0)=n\gep_n^2$ in order to find the corresponding optimized bound $\tilde{\gep}_n(\alpha,\tau_0)$; notice that this change in order does not affect the bounds on $\gep_n(\alpha,\tau)$. Balancing the two terms on the right hand side of \eqref{concfcn_t} gives \[\tau_0=\gep_n^{1/(\beta p)} \big[\log \big(1/ \gep_{n}\big)\big]^{(2-p) (\beta p-1)/(2 \beta p^{2})}.\] The equation $u_{\theta_{0}}(\gep_{n}, \tau_0)=n\gep_n^2$ can then be seen to be equivalent to \[\gep_n^{\frac{1+2\beta}{\beta}}[\log\big(1/\gep_n\big)]^{-\frac{2-p}{2\beta p}}=1/n,\] and using \cite[Lemma G.2]{ADH21} we obtain the bound claimed in \eqref{gepn0_t}. {The potential blow up of the constants in \eqref{gepn_t} as we vary $\tau$ is treated in Remark \ref{rem:rem:rem} below.} Finally, for fixed $\tau=1$, it is straightforward to verify the claimed bound \eqref{gepn0_a}. 
\end{proof}

\begin{rem}\label{rem:rem:rem}
A straightforward calculation shows that for all relationships between $\alpha, \beta$ and $p$, the optimized bounds in the case of fixed $\alpha$ and varying $\tau$ satisfy $\tilde{\gep}_n(\alpha,\tau_0)\lesssim \tau_0$. Therefore,  for $\tau=\tau_0$ the constant in \eqref{gepn_t} does not blow up  as $n\to\infty$, and $\tilde{\gep}_n(\alpha,\tau_0)$ are indeed upper bounds for $\gep_n(\alpha, \tau_0)$  hence also for $\gep_{n,0}$.
\end{rem}

Optimized upper bounds for $\gep_n(\alpha,\tau)$ when the truth is in a $B^\beta_{qq}$ Besov space with $p\leq q<2$, were studied in \cite[Proposition 5.8]{ADH21}. For the reader's convenience, we recall these bounds.

\begin{lem}\label{lem:BBesov} Assume that $\Pi(\cdot \mid \at), \alpha>0, \tau>0$ is an $\alpha$-regular $\tau$-scaled $p$-exponential prior in $\Theta$ and that $\theta_0 \in B^{\beta}_{qq}$, where $\beta\ge1/p$, $p\leq q$ and $1\leq q<2$. The upper bounds $\tilde{\gep}_n(\alpha,\tau)$ from part (ii) of Lemma \ref{lem:Lemma3.2} are optimized over the choices of $\tau$ and $\alpha$ as follows:
 \[\tilde{\gep}_n(\at)\gtrsim \bar{\gep}_n:=\tilde{\gep}_n\big(\beta-1/p,\tau_0\big), \;\;\text{for all}\;\;{\alpha, \tau>0},\]
where
\[\tilde{\gep}_n\big(\beta-1/p,\tau_0\big)\asymp \minimax(\log{n})^\frac{q-p}{pq(1+2\beta)},\]
and \[\tau_0=n^{-\frac1{p(1+2\beta)}}(\log{n})^\omega,  \;\omega=\Big(p-\frac{1}{1+2\beta}\Big) \frac{q-p}{p^2q} \ge0.\]
The constants in the above bounds depend on $\theta_0$ only through the norm $\norm{\theta_0}_{B^\beta_{qq}}$.
\end{lem}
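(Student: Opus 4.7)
The result is stated as essentially a restatement of \cite[Proposition 5.8]{ADH21} applied to the upper bounds on $\gep_n(\at)$ derived in Lemma \ref{lem:Lemma3.2}(ii), so one natural route is simply to invoke that proposition. For a self-contained verification, my plan is to optimize the upper bound $\tilde\gep_n(\alpha,\tau)$ first over $\tau$ with $\alpha$ fixed, and then over $\alpha$, exactly in parallel with the Sobolev case treated in Lemma \ref{lem:Lemma3.3}.

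First, I would fix $\alpha$ and inspect the three cases in \eqref{gepn_tBesov}. In each case the upper bound is the sum of the term $n^{-\alpha/(1+2\alpha)} \tau^{1/(1+2\alpha)}$ which is increasing in $\tau$, and a term decreasing in $\tau$; hence the optimal $\tau$ is obtained by balancing these two terms. For $\beta\neq\alpha+1/p$ this is a purely polynomial balance and can be solved directly. For the critical case $\beta=\alpha+1/p$, the decreasing term carries the logarithmic correction $[\log(\sqrt{n\tau^p})]^{(q-p)/(2q)}$, and the balance equation becomes transcendental; I would handle it by first writing the balance in terms of a power of $\tau$ times a logarithm, and then solving via \cite[Lemma G.2]{ADH21}, as was done in the $\beta=\alpha+1/p$ case of Lemma \ref{lem:Lemma3.3}. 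This gives, for each $\alpha$, an optimal $\tau=\tau(\alpha)$ and a corresponding optimized rate $\tilde\gep_n(\alpha,\tau(\alpha))$.

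Next, I would compare the three families of optimized rates (indexed by the position of $\alpha$ relative to $\beta-1/p$) and show that the minimum over $\alpha$ is attained at the critical boundary $\alpha=\beta-1/p$. In the subcritical regime $\beta<\alpha+1/p$, a direct computation of the balanced rate shows it is polynomially slower than $\minimax$, the gap closing as $\alpha\downarrow \beta-1/p$; similarly, in the supercritical regime $\beta>\alpha+1/p$ the balanced rate obtained from the second line of \eqref{gepn_tBesov} is again polynomially slower than $\minimax$, the gap closing as $\alpha\uparrow \beta-1/p$. At $\alpha=\beta-1/p$ itself, substituting into the first term and balancing against $(n\tau^p)^{-1/2}[\log(\sqrt{n\tau^p})]^{(q-p)/(2q)}$ yields, after solving the resulting log-equation, the claimed choice
\[
\tau_0\asymp n^{-\tfrac{1}{p(1+2\beta)}}(\log n)^{\omega},\qquad \omega=\Big(p-\tfrac{1}{1+2\beta}\Big)\tfrac{q-p}{p^2 q},
\]
and the corresponding optimized rate $\tilde\gep_n(\beta-1/p,\tau_0)\asymp \minimax (\log n)^{(q-p)/(pq(1+2\beta))}$.

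The main obstacle I anticipate is the bookkeeping in the critical case: one must solve a balance equation of the form $\tau^{a}\asymp (n\tau^p)^{-1/2}[\log(\sqrt{n\tau^p})]^{(q-p)/(2q)}$ for the correct exponents of $n$ and $\log n$, then substitute back to recover the rate, and this requires a careful application of \cite[Lemma G.2]{ADH21}. A secondary but routine obstacle is verifying that $\omega\geq 0$ (which is automatic since $\beta\geq 1/p$ implies $p\geq 1/(1+2\beta)$ and $q\geq p$), and that the constants in \eqref{gepn_tBesov}, which by Lemma \ref{lem:Lemma3.2}(ii) are uniform over $\tau$ for which $\gep_n/\tau$ is bounded, do not blow up at the optimizing $\tau_0$; the latter follows from the elementary inequality $\tilde\gep_n(\beta-1/p,\tau_0)\lesssim \tau_0$, as in Remark \ref{rem:rem:rem}.
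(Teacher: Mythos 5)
The paper offers no proof of this lemma at all---it simply recalls the bounds from \cite[Proposition 5.8]{ADH21}---so your primary route of invoking that proposition is exactly the paper's approach, and your self-contained two-stage optimization (balance the increasing and decreasing terms in $\tau$ for fixed $\alpha$, then show the minimum over $\alpha$ sits at the critical value $\alpha=\beta-1/p$, resolving the logarithmic balance via \cite[Lemma G.2]{ADH21} and checking $\tilde{\gep}_n\lesssim\tau_0$ so the constants do not degenerate) is the natural expansion of that citation, in direct parallel with the paper's own proof of Lemma \ref{lem:Lemma3.3}. The proposal is correct.
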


\begin{rem}\label{rem:rem:rem:Besov}
Similarly to Remark \ref{rem:rem:rem}, it is straightforward to check that the optimized bounds in the last result satisfy $\tilde{\gep}_n\big(\beta-1/p,\tau_0\big)\lesssim \tau_0$. Therefore,  for $\tau=\tau_0$ the constant in \eqref{gepn_tBesov} does not blow up  as $n\to\infty$, and $\tilde{\gep}_n\big(\beta-1/p,\tau_0\big)$ are indeed upper bounds for $\gep_n\big(\beta-1/p, \tau_0\big)$  hence also for $\gep_{n,0}$.
\end{rem}

\subsection{Proofs of main theorems}
In order to prove our three main results, we verify the conditions of Corollary \ref{corRS17} in each of the corresponding settings, in a similar way to the proof of \cite[Proposition 3.2]{RS17}. 
The verification of these conditions is based on a number of lemmas contained in Section  \ref{sec:lemmas} of the Supplement.
\begin{proof}[Proof of Theorem \ref{thm:Sobolev-tau}]
Here we have $\lambda=\tau$, while $\alpha>0$ is fixed. Let $M_n, w_n$ tending to infinity, $w_n=o(M_n)$. Verifying the conditions of Corollary \ref{corRS17} and combining with the bounds on $\gep_n(\tau)$ and $\gep_{n,0}$ in the first part of Lemma \ref{lem:Lemma3.3}, establishes the result. 

We start the proof with some observations. First, note that for all possible relationships between $\alpha, \beta$ and $p$, the optimal choices of $\tau$, denoted by $\tau_0$ in Lemma \ref{lem:Lemma3.3}, are contained in the set of candidate hyper-parameters $\Lambda_n=[n^{-1/(2+p+2\alpha p)}, n^{\alpha}]$. Indeed, for $\beta>\alpha+1/p$ we have $\tau_0=n^{-1/(2+p+2\alpha p)}$, for $\beta<\alpha+1/p$ we have $n^\alpha>\tau_0=n^{(\alpha-\beta)/(1+2\beta)}>n^{-1/(2+p+2\alpha p)}$, while for $\beta=\alpha+1/p$ we have $n^\alpha>\tau_0\ge n^{-1/(2+p+2\alpha p)}(\log n)^\varsigma$ for some $\varsigma\ge0$. 

Moreover, note that there exists $\tilde{C}>0$ such that $\gep_n (\tau) \le \tilde{C} \tau$, for any $\tau\in\Lambda_n$ and in all considered relationships between $\alpha, \beta, p$. Indeed, we use \eqref{gepn_t}. First, notice that $n^{-\alpha/(1+2\alpha)}\tau^{1/(1+2\alpha)}\leq \tau$ is equivalent to $\tau\ge n^{-1/2}$, which indeed holds for any $\tau\in\Lambda_n$. For $\beta<\alpha+1/p$, we check that $(n\tau^p)^{\beta/[\beta(p-2)-\alpha p -1]} \le \tau$ or equivalently $\tau \ge n^{-\beta/(1+\alpha p+2\beta)}$, which holds for all $\tau\ge n^{-1/(2+p+2\alpha p)}$ by the condition $\beta\geq (1+\alpha p)/(p+2\alpha p)$. For $\beta>\alpha+1/p$, we check that $n^{-1/2}\tau^{-p/2}\leq \tau$ or equivalently $\tau\ge n^{-1/(2+p)}$, which holds trivially for all $\tau\geq n^{-1/(2+p+2\alpha p)}$. For $\beta=\alpha+1/p$, we check that {$n^{-1/2}\tau^{-p/2}\big[\log(n\tau^p)\big]^{1/2-p/4}\leq \tau$}, which holds similarly to the previous case [using that for any $\delta>0$ (arbitrarily small), there exists $C_\delta>0$ such that $\log (n\tau^p)\leq C_\delta(n\tau^p)^\delta,$ for all $n\ge1$ and for all $\tau\in\Lambda_n$]. 

Finally, note that the considerations in the last paragraph, enable us to employ Lemma \ref{lem:2.2b} to show that for all $\tau\in\Lambda_n$ we have $\gep_n(\tau)\gtrsim n^{-t}$ where $t<1/2$. By the definition of $\gep_{n,0}$, \eqref{defn:en0}, we thus get that $\gep_n(\tau)\gtrsim \gep_{n,0}$ for all $\tau\in\Lambda_n$.


We take $u_n\asymp n^{-(3+2\alpha)}$. First, we observe that $N_n(\Lambda_n)\leq n^H$ for some $H>0$. Indeed, by the definition of $\rho$, starting from the lower limit of $\Lambda_n$, $n^{-1/(2+p+2\alpha p)}$, the number $N$ of intervals of $\rho$-length $u_n$ required to reach the upper limit $n^\alpha$, is given as $N\asymp \log(n)/u_n\leq n^H$ for a sufficiently large $H$. Since by definition \eqref{defn:en0} it holds that $\gep_{n,0}^2\geq m_n(\log n)/n$ for some $m_n$ tending to infinity arbitrarily slowly, we obtain $\log N_n(\Lambda_n)=o(n\gep_{n,0}^2)$, thus $\log N_n(\Lambda_n\setminus \Lambda_0)=o(nw_n^2\gep^2_{n,0})$ and $\log N_n(\Lambda_0)=O(n\gep_{n,0}^2)$ as required.

Conditions (A1) and (A2) follow from Lemmas \ref{lem:C1}, \ref{lem:E1}, \ref{lem:E2} and \ref{lem:E3}  (applied with $\t_n=n^\alpha$, see also Remark \ref{rem:taugep}), with $c_1=1/32, \zeta=1/4$ and  $K^2\geq 2M'\eta^p\tilde{c}_1^2/c_1$ [for \eqref{eqA1.2}, note that these conditions concern $\tau\in \Lambda_n\setminus \Lambda_0$ so that $nw_n^2\gep_{n,0}^2=o(n\gep_n^2)$]. Conditions (C1) and (C2) also follow from the same lemmas with $c_2=\eta/4, M=K$. For (C1), which is implied by display  \eqref{eq:lemE.2} of the Supplement, note that as explained above it holds $\gep_n(\tau)\geq \gep_{n,0}$ for all $\tau\in\Lambda_0$, and that we can replace the constant $C$ in  \eqref{eq:lemE.2} by 1, by replacing $\eta/2$ by $\eta/4$ in the exponent. For (C2), notice that for $\lambda\in \Lambda_0$ we have $\gep_n(\lambda)\leq M_n\gep_{n,0}$, so that if \eqref{eqA2.2} holds for all $u\geq K\gep_n(\lambda)$, it also holds for  all $u\geq KM_n\gep_{n,0}$. Moreover, notice that condition (C3) is implied by display \eqref{eq:normdifbound} in the Supplement below combined with the definition of $\pth(\at)$ in Lemma \ref{lem:C1}, in particular by the fact that for $\theta\in\pth(\at)$ it holds $\norm{\theta}_{H^s}\lesssim \tau n\gep_n^2$. These considerations complete the proof of part (i).

To prove part (ii) we need to additionally verify conditions (H1) and (H2). Due to the lack of sharp lower bounds for $\gep_n(\lambda)$, we do so for $\tilde{w}_n=\tilde{M}_n \enab/\gep_{n,0},$ for $\enab$ the optimized upper bounds on $\gep_n(\tau)$ defined in Lemma \ref{lem:Lemma3.3} and where $\tilde{M}_n$ tends to infinity arbitrarily slowly. By part (iii) of Corollary \ref{corRS17}, this will secure that the hierarchical Bayes posterior satisfies
\[E^n_{\theta_0}\Pi\Big(\theta\,:\,\norm{\theta-\theta_0}_2 \ge MM_n\gep_{n,0} | X^{(n)}\Big)=o(1),\]
where $M_n\gep_{n,0}=M_n \tilde{M}_n \enab/\tilde{w}_n$ and where, since the only restriction on $\tilde{w}_n$ is that $\tilde{w}_n=o(M_n\wedge w_n)$, the sequences $M_n$ and $w_n$ can be chosen so that $M_n \tilde{M}_n/\tilde{w}_n$ (which in the statement we denote again by $M_n$) tends to infinity arbitrarily slowly.

In order to allow hyper-priors with polynomial tails for large values of $\tau$ (for example inverse Gamma distributions), we use a larger set $\Lambda_n$ compared to the empirical Bayes setting, in particular we set $\Lambda_n=[n^{-1/(2+p+2\alpha p)}, \,\exp({2c_{0} \bar{c}_{0} n \tilde{w}_{n}^{2} \gep_{n,0}^{2}}) ]$ (this change only concerns the proof, since the formulation of the hierarchical prior does not involve $\Lambda_n$). For this larger set $\Lambda_n$, there still exists a constant $\tilde{C}>0$ such that $\gep_n(\tau)\leq \tilde{C}\tau$ for all $\tau\in \Lambda_n$ (with an identical verification as earlier in this proof, since what matters is the lower boundary of $\Lambda_n$) and it still holds that $\gep_n(\tau)\gtrsim \gep_{n,0}$ for all $\tau\in\Lambda_n$. 

An inspection of the proof of \cite[Theorem 2.3]{RS17} reveals that we can use two different choices of $u_n$, one for verifying the assumptions that concern $\lambda\in\Lambda_n\setminus \Lambda_0$ and one for those that concern $\lambda\in\Lambda_0$ (see also \cite[Remark 3.3]{RS17}). On $\Lambda_n\setminus \Lambda_0$ we use $u_n\asymp \exp({-4c_0\bar{c}_0n\tilde{w}_n^2\gep_{n,0}^2})$, and conditions (A1), (A2) can again be verified using Lemmas \ref{lem:C1}, \ref{lem:E1}, \ref{lem:E2} and \ref{lem:E3},  applied with $\bar{\tau}_n=\exp({2c_0\bar{c}_0n\tilde{w}_n^2\gep_{n,0}^2})$. In $\Lambda_0$ we use $u_n\asymp n^{-(3+2\alpha)}$ and for this choice the conditions (C1)-(C3) have been verified in the proof of part (i). 
The corresponding conditions on the (hyper-)entropy still hold too. Indeed, starting from $n^{-1/(2+p+2\alpha p)}$, the number $N$ of intervals of $\rho$-length  $u_n\asymp \exp({-4c_0\bar{c}_0n\tilde{w}_n^2\gep_{n,0}^2})$ required to reach $\exp({2c_{0} \bar{c}_{0} n \tilde{w}_{n}^{2} \gep_{n,0}^{2}})$ satisfies $N\lesssim  n\tilde{w}_n^2\gep_{n,0}^2/u_n$, so that $\log N=O(\tilde{w}_n^2n\gep_{n,0}^2)=o(w_n^2n\gep_{n,0}^2)$ since $\tilde{w}_n=o(w_n)$. This implies that $\log N_n(\Lambda_n\setminus \Lambda_0)\leq \log N_n(\Lambda_n)=o(w_n^2n\gep_{n,0}^2)$ as required. On the other hand, noticing that \eqref{lem2.2_5} implies that $\Lambda_0\subset [n^{-1/(2+p+2\alpha p)}, n^\alpha]$, the second hyper-entropy condition $\log N_n(\Lambda_0)=O(n\gep_{n,0}^2)$ follows for $u_n\asymp n^{-(3+2\alpha)}$ as in the proof of part (i).

Turning to assumption (H1), it is implied by Lemma \ref{lem:3.5} of the Supplement. 
Finally, notice that the sieve sets $\Theta_n(\at)$ as defined in Lemma  \ref{lem:C1}, satisfy \eqref{eq:aux}, since by \eqref{eq:en} and display \eqref{eq:pthbnd} in the Supplement below
\[\Pi(B_n\cap \pth|\lambda)\ge \Pi(B_n|\lambda)-\Pi(\pth^c|\lambda)\geq e^{-n\gep_n^2(\lambda)}-2s_1e^{-\eta n\gep_n^2(\lambda)},\]
for constants $s_1>0$ and $\eta>1$.  Since there exist $C>0$ and $t<1/2$ such that for all $\tau\in\Lambda_n$ it holds $n\gep_n^2(\tau)\ge C n^{1-2t}$, for sufficiently large $n$ the right hand side in the last display can be lower bounded by $\exp({-n\gep_n^2(\lambda)})/2 = \Pi(B_n|\lambda)/2$ for all $\tau\in\Lambda_n$, thus \eqref{eq:aux} holds. As discussed in Subsection \ref{sec:conditions}, under \eqref{eq:aux} it suffices to verify condition (H2) for $\theta\in B_n\cap \pth(\at)$. Indeed, this follows from Lemma \ref{lem:E4} of the Supplement below 
with $c_3 \geq 2 + 2K^2$, recalling that for $\gep_n(\tau)\gtrsim \gep_{n,0}$ for all $\tau\in\Lambda_n$.
\end{proof}

\begin{proof}[Proof of Theorem \ref{thm:Sobolev-alpha}]
Here we have $\lambda=\alpha$, while $\tau=1$. We  verify the conditions of Corollary \ref{corRS17} and combine with the bounds on $\gep_n(\alpha)$ and $\gep_{n,0}$ in the second part of Lemma \ref{lem:Lemma3.3} to establish the result. By \eqref{lem2.2_5}, for all $\alpha>0$ it holds $\gep_n(\alpha)\gtrsim n^{-t}$ with $t<1/2$, hence by definition \eqref{defn:en0} it holds $\gep_n(\alpha)\gtrsim \gep_{n,0}$ for all $\alpha>0$. 

We take $u_n\asymp n^{-3}$. The (hyper-)entropy conditions again follow from the fact that $N_n(\Lambda_n)\leq n^3$ and the fact that $\gep_{n,0}\geq m_n(\log n)/n$ for some $m_n$ tending to infinity arbitrarily slowly.
The verification of conditions (A1), (A2), (C1)-(C3) is almost identical to the $\lambda=\tau$ case in the proof of Theorem \ref{thm:Sobolev-tau}, based on Lemmas  \ref{lem:C1}, \ref{lem:E1}, \ref{lem:E2} and \ref{lem:E3} as well as  Remark \ref{rem:eta} of the Supplement. {Note the absence of the constant $M>0$ appearing in Corollary \ref{corRS17}(ii) from assertion \eqref{eq:thm2328}. An inspection of the proof of \cite[Corollary 2.1]{RS17} shows that the origin of $M>0$ is assumption (C2), which here we verify with $M=K$, where $K$ is defined in \eqref{eq:en} and, in particular, is independent of the sequence $M_n$. As a result, it is straightforward to check that $M$ can be absorbed in the diverging sequence $M_n$.} This proves part~(i).

The proof of part (ii) is similar to part (ii) of Theorem \ref{thm:Sobolev-tau}, with the difference that the verification of condition (H1) is now based on Lemma \ref{lem:3.6} of the Supplement instead of Lemma \ref{lem:3.5} (for $\lambda=\alpha$ we use the same $\Lambda_n$ in both the empirical and hierarchical settings). {Similarly to the proof of part (i), it can be verified that the constant $M$ appearing in Corollary \ref{corRS17}(iii) can be absorbed in $M_n$.}
\end{proof}

\begin{proof}[Proof of Theorem \ref{thm:Besov}]
Here we have $\lambda=(\at)$. We  verify the conditions of Corollary \ref{corRS17} and combine with the bounds on $\gep_{n,0}$ in Lemma \ref{lem:BBesov} to obtain the result. 

Similarly to the proof of Theorem \ref{thm:Sobolev-tau}, using the bounds in \eqref{gepn_tBesov}, it is straightforward to verify that there exists $\tilde{C}>0$ such that for all considered $\beta, p, q$, it holds $\gep_n(\at)\leq \tilde{C}\tau$ for all $(\alpha,\tau)\in\Lambda_n$. It is also straightforward to check that the optimized choices of the hyper-parameter $\lambda=(\at)$, $\alpha=\beta-1/p$ and $\tau=\tau_0$, where $\tau_0$  is defined in Lemma \ref{lem:BBesov}, are contained in $\Lambda_n$.
The fact that $\gep_n(\at)\gtrsim\gep_{n,0}$ for all $(\at)\in\Lambda_n$ and the (hyper)-entropy conditions, follow from a combination of the considerations in the proofs of Theorems \ref{thm:Sobolev-tau} and \ref{thm:Sobolev-alpha}. 

The proof of part (i) is then almost identical to the proofs of Theorems \ref{thm:Sobolev-tau} and \ref{thm:Sobolev-alpha}, where we take $u_n\asymp n^{-(3+2\bar{\alpha})}$ (Lemmas \ref{lem:E1}, \ref{lem:E2} and \ref{lem:E3} are applied with $\bar{\tau}_n=n^{\bar{\alpha}}$).

The proof of part (ii) is also almost identical to the proofs of Theorems \ref{thm:Sobolev-tau} and \ref{thm:Sobolev-alpha}, where we use
$$\Lambda_{n} := \Big\{(\at): \alpha\in[\ubar{\alpha}, \bar{\alpha} ], \tau\in\big[n^{-\frac1{2+p+2\alpha p}}, e^{2{\conr_0} \bar{c}_0 n \tilde{M}_{n}^{2} {(\minimax)}^2 (\log n)^{2\omega'}} \big]\Big\}.$$
On $\Lambda_n\setminus \Lambda_0$ we choose $u_n\asymp \exp({-4{\conr_0} \bar{c}_0 n \tilde{M}_{n}^{2} {(\minimax)}^2 (\log n)^{2\omega'}})$, in which case Lemmas \ref{lem:E1}, \ref{lem:E2} and \ref{lem:E3} are applied with $\bar{\tau}_n=\exp({2{\conr_0} \bar{c}_0 n \tilde{M}_{n}^{2} {(\minimax)}^2 (\log n)^{2\omega'}})$, while on $\Lambda_0$ we choose $u_n \asymp n^{-(3+2\bar{\alpha})}$. Condition (H1) is now based on Lemma \ref{lem:3.7} of the Supplement below.  
\end{proof}

\section{Outlook and computation}\label{sec:conc}
Combined, the three main theorems of this article indeed show that $\alpha$-regular $\tau$-scaled Laplace priors lead to adaptivity over all Besov spaces, spatially homogeneous \emph{and} spatially inhomogeneous. This is in contrast to Gaussian priors which, due to \cite[Theorem 4.1]{AW21}, can only adapt over spatially homogeneous Besov spaces. 

For this theoretical benefit of Laplace priors to be realized in practice, the efficient numerical implementation of the procedures studied in this work is crucial.
Naturally, this implementation is expected to pose some challenges and will be the topic of future work by the authors. The standard way of probing the joint posterior on $\theta,\lambda|X$ in the studied hierarchical setting, uses a (Metropolis within) Gibbs sampler which alternates between updating $\theta|X,\lambda$ and $\lambda|X,\theta$. In high dimensions, the parameters $\theta$ and $\lambda$ are strongly dependent under the assumed $\lambda$-dependent priors on $\theta$ and, as a result, the $\lambda$-chain in such Gibbs samplers mixes poorly. This has been studied for Gaussian priors with hyper-priors on scaling or regularity hyper-parameters in \cite{ABPS14} based on \cite{PRS07}; the intuition carries over to Laplace priors as well. To resolve this issue, \cite{ABPS14, PRS07} propose to re-parametrize the prior by writing $\theta=T(v,\lambda)$ for some appropriate transformation $T$, where $v$ and $\lambda$ are a priori independent with priors chosen so that $T(v,\lambda)$ has the desired prior distribution for $\theta$. For example, for an $\alpha$-regular $\tau$-scaled $p$-exponential prior with $\lambda=\tau$, we can write $\theta=\tau v$, where $v$ is $\alpha$-regular $p$-exponential with $\tau=1$ and $\tau$ has the originally postulated hyper-prior distribution. The non-centered parametrization of the initial Gibbs sampler, alternates between updating $v|X,\lambda$ and $\lambda|X,v$; samples for $\theta|X,\lambda$ can be obtained using $T(v,\lambda)$. While this method leads to robust algorithms with respect to dimension, for small noise the $\lambda$-chain again mixes poorly because $v$ and $\lambda$ are strongly a posteriori dependent (since they are constrained to satisfy $T(v,\lambda)\approx \theta_0$). As a result, when the discretization level is high \emph{and} the noise is small, both Gibbs sampler parametrizations have poor performance. A candidate alternative approach is the (pseudo) marginal algorithm which alternates between updating the  marginal $\lambda|X$ and $\theta|X, \lambda$, \cite{AR09}; see also \cite[Section 2.3]{ABPS14}.  

In addition, $p$-exponential priors with $p\neq2$ are not conjugate to the likelihood in the studied white noise model (conditionally on $\lambda$), hence one needs to employ Markov chain Monte Carlo (MCMC) methods for sampling the conditional distributions $\theta|X,\lambda$ or $v|X,\lambda$. Such methods will need to be used even with Gaussian priors, when implementing models for which Gaussian priors are not conjugate, such as density estimation \cite{MG22} and non-linear inverse problems \cite{GN20}. For Gaussian priors, MCMC algorithms which are well defined in function spaces and thus remain robust for high discretization levels, have been introduced in the last decade, see for example \cite{CRSW13, CLM16, BGLFS17}. The contribution \cite{CDPS18} discusses how to modify these algorithms to achieve dimension-robust  posterior sampling for non-Gaussian priors including $p$-exponential. The idea is to write $\theta=T(\xi,\lambda)$ where $\xi$ is a Gaussian white noise and $T$ is an appropriate transformation, which is such that $T(\xi,\lambda)$ has the desired prior for $\theta|\lambda$, and to use one of the known dimension-robust algorithms for Gaussian priors to sample the posterior $\xi|X, \lambda$. Samples for $\theta|X,\lambda$ can be obtained using $T(\xi,\lambda)$. In the same contribution, it is discussed how to combine this approach with a non-centered Gibbs sampler, to achieve dimension-robust sampling of the joint distribution of $\theta,\lambda|X$, when the prior on $\theta$ is non-Gaussian.

{For the implementation of the MMLE empirical Bayes approach, one can maximize the marginal likelihood over a fine grid of candidate values $\lambda\in \Lambda_n$. For the computation of the marginal likelihood $\bar{m}(X^{(n)}|\lambda)$, which for $p<2$ is not available explicitly, one can exploit the product structure in the assumed setting to express $\bar{m}$ as a product of univariate integrals, which can be approximated, for example, using quadrature. Note that, even for $p=2$, it is possible that $\bar{m}(X^{(n)}|\cdot)$ is multimodal with respect to $\lambda$, see \cite[Section 4.6]{SVZ13}.}

As a preliminary illustration, we provide two simulation experiments relating to our hierarchical Bayes results under Sobolev regularity (part (ii) of Theorems \ref{thm:Sobolev-tau} and \ref{thm:Sobolev-alpha}). In the first, we consider the setting studied in \cite[Section 3]{SVZ13}. In particular, we consider the white noise model \eqref{eq:wnm} for $n=200$ and with underlying truth $\theta_0\in L_2[0,1]$ which has coefficients $\theta_{0,\ell}=\ell^{-2.25}\sin(10\ell)$ with respect to the orthonormal basis $e_\ell(t)=\sqrt{2}\sin(\pi \ell t), \,t\in[0,1],\, \ell\in\N$. Such a function corresponds to regularity $\beta=1.75$ (more precisely it has Sobolev regularity $\beta$ for any $\beta<1.75$). We consider $\alpha$-regular and $\tau$-scaled Laplace priors defined via the basis $(e_\ell)$, for regularities $\alpha=\beta-1, \beta-1/2, \beta, \beta+1/2, \beta+1$ and $\lambda=\tau$ with hyper-prior an Inv-Gamma$(1,1)$ distribution, left-truncated at $n^{-1/(3+2\alpha)}$ (recall Assumption \eqref{thm1_1}). We truncate the relevant series expansions up to $L=200$, which, since for all choices of $\alpha$ the function $\theta$ is assumed to have Sobolev regularity larger than $0.5$ (under the prior, hence, by absolute continuity under the posterior as well), ensures that the approximation error induced by truncation is of lower order compared to the minimax estimation error over $H^\beta$. In Figure \ref{fig:tau} we present the resulting posterior means and 95\% credible regions (the latter computed by taking the 95\% out of the final 20000 iterations of the Gibbs sampler which are closest to the mean in $L^2$-sense). Evidently, the performance of the posterior means is comparable to the results in Figure 6 of \cite{SVZ13}, obtained using the corresponding Gaussian priors. The large width of the credible regions is attributed to the relatively large size of the noise (running the simulation with Gaussian priors shows that they give rise to credible regions of similar width).

\begin{figure}
    \centering
    \includegraphics[width=0.3\textwidth]{./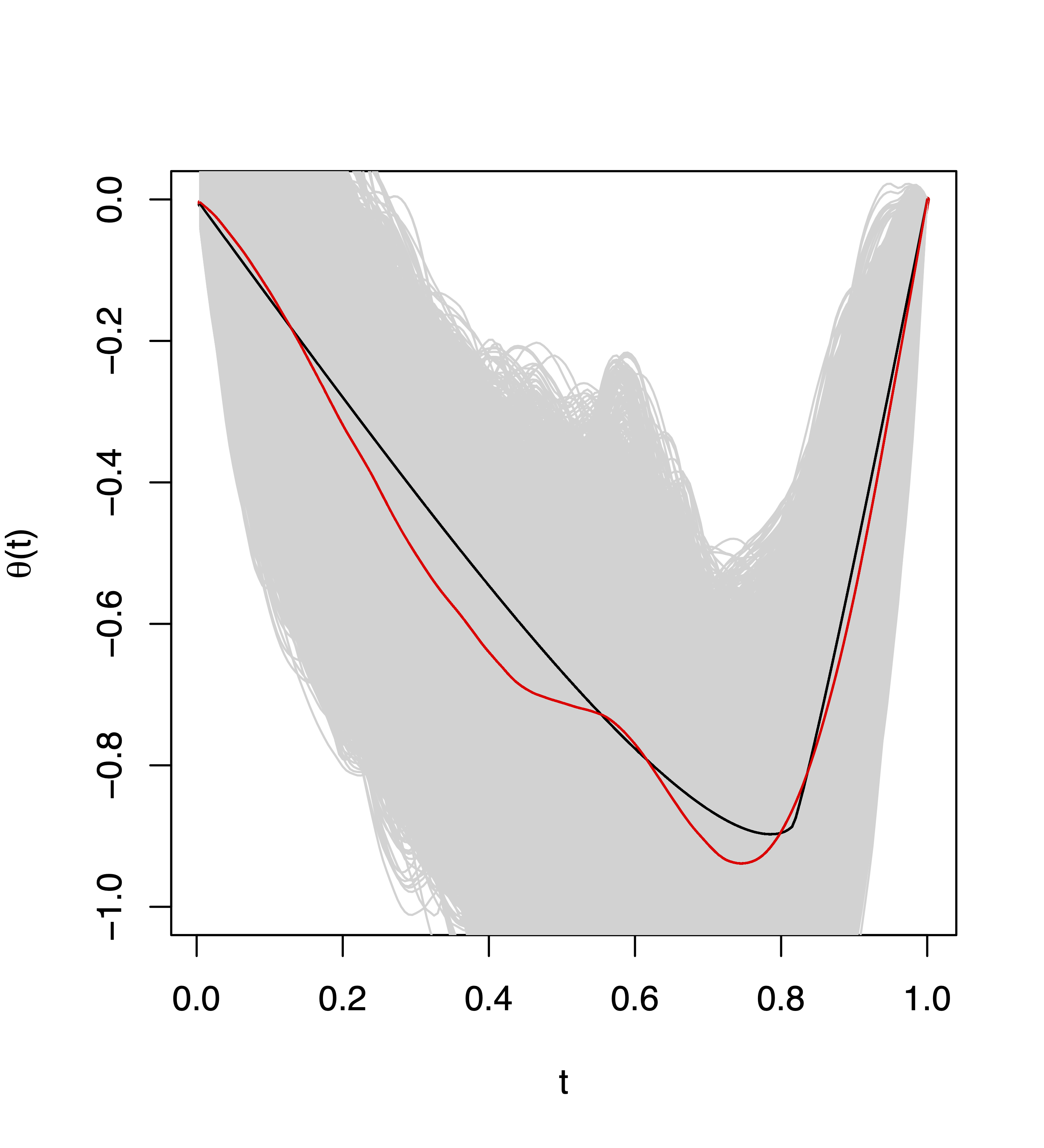}
     \includegraphics[width=0.3\textwidth]{./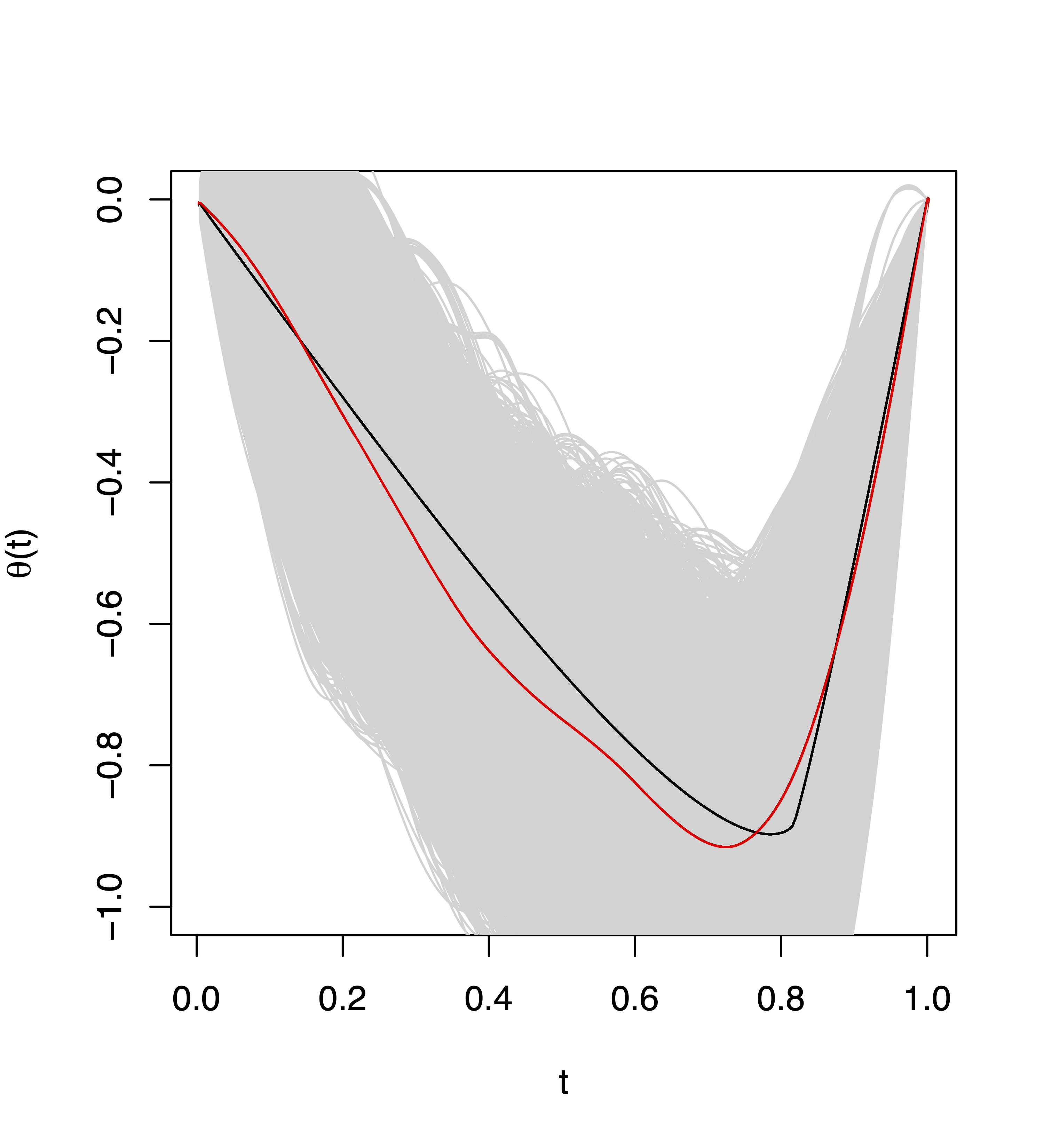}
    \includegraphics[width=0.3\textwidth]{./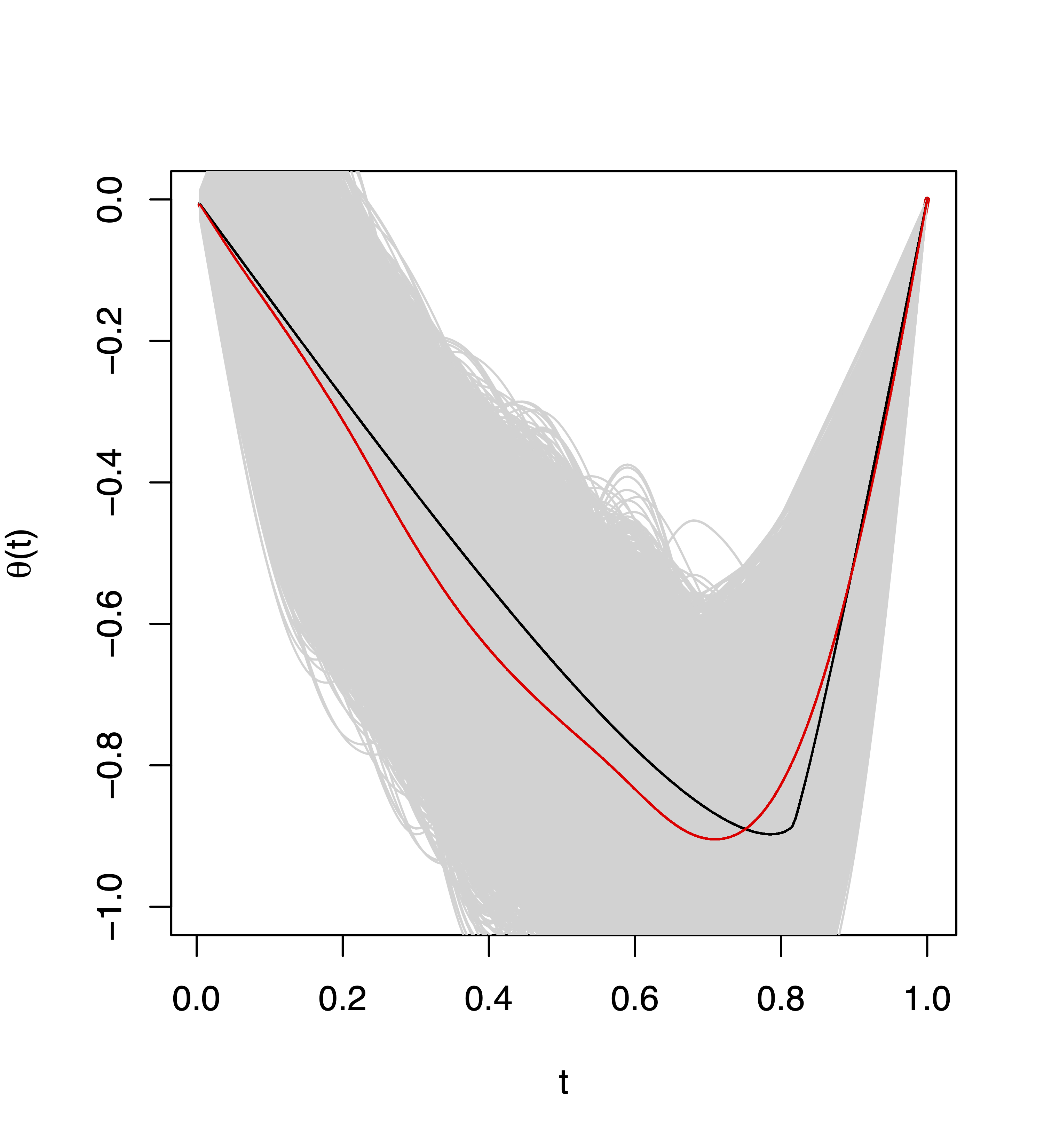}
    \includegraphics[width=0.3\textwidth]{./plots/tau-175.jpg}
    \includegraphics[width=0.3\textwidth]{./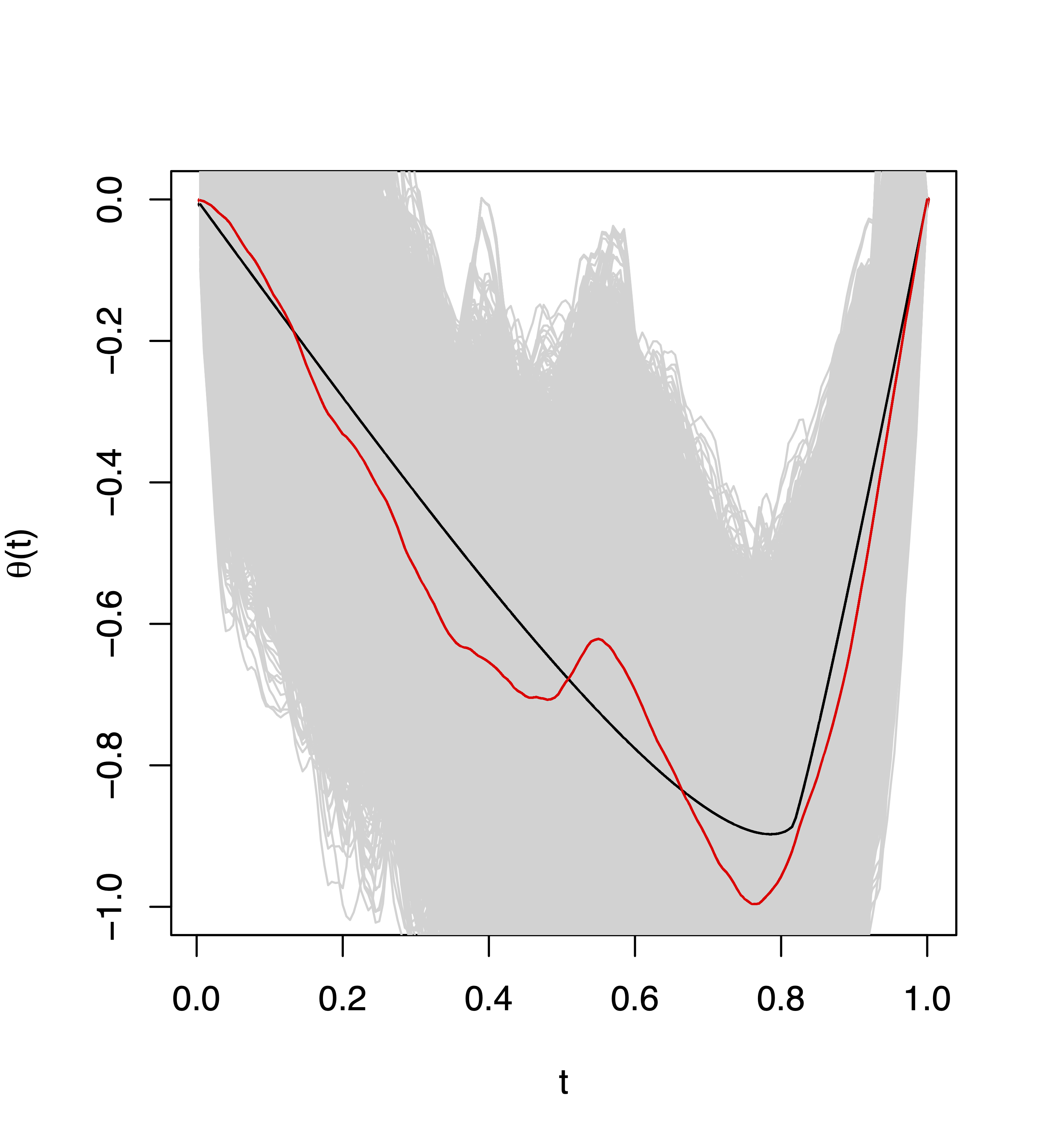}
    \includegraphics[width=0.3\textwidth]{./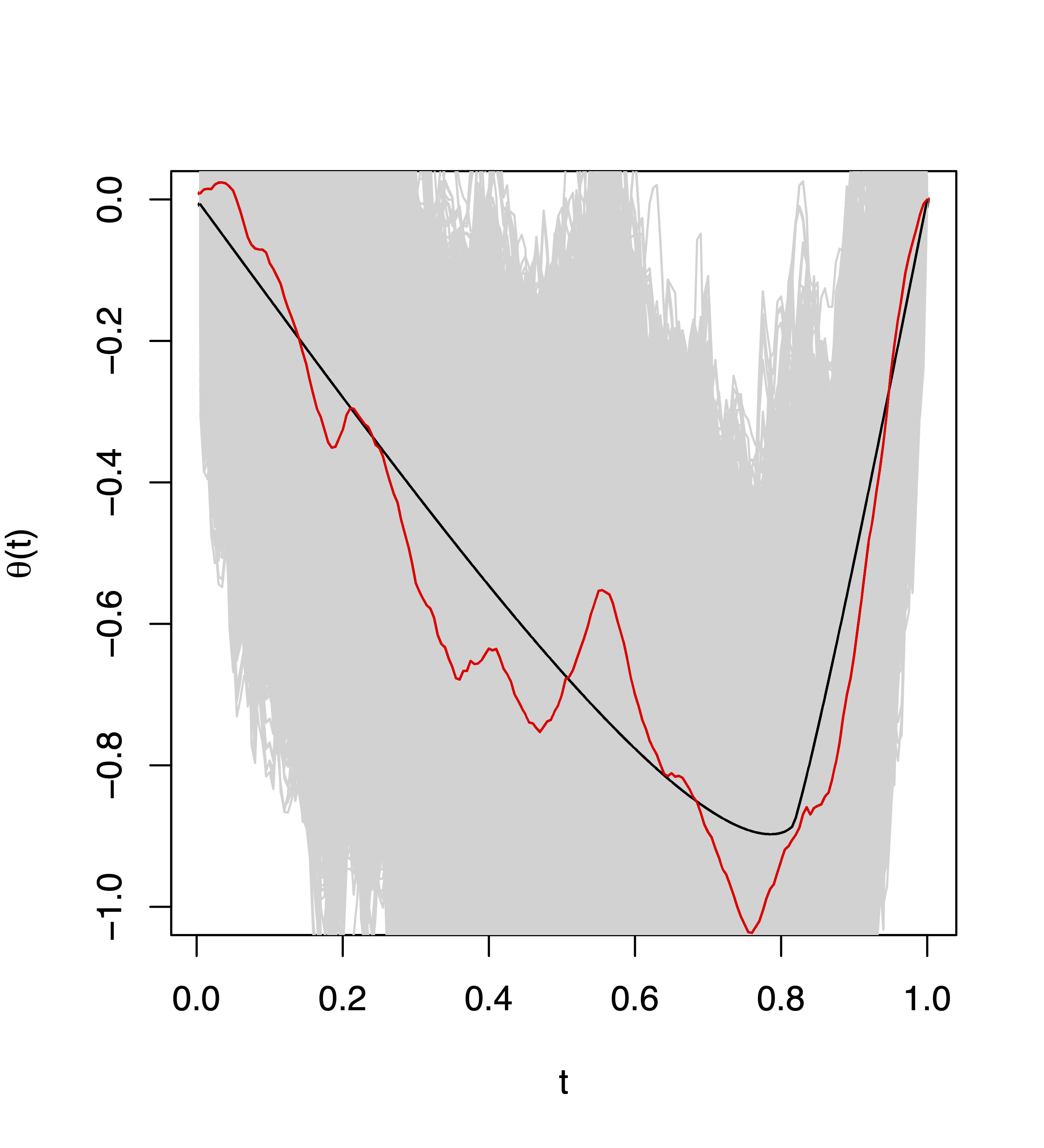}
  \caption{Top row left to right: truth (black), hierarchical Bayes posterior means (red) and 95\% credible regions (grey) for Laplace priors with regularities $\alpha=\beta, \beta+1/2, \beta+1$ and $\lambda=\tau$. Bottom row left to right: truth, posterior means and credible regions for Laplace priors with regularities $\alpha=\beta, \beta-1/2, \beta-1$ and $\lambda=\tau$.}

     \label{fig:tau}
\end{figure}

In the second experiment, we consider a similar setting to the one studied in \cite[Section 3]{KSVZ16} (but without inversion). In particular, as in the previous experiment we consider the white noise model, this time for $n=10^3$ and $n=10^5$, and with $\theta_{0,\ell}=\ell^{-3/2}\sin(\ell)$ with respect to the orthonormal basis $e_\ell(t)=\sqrt{2}\cos\big(\pi(\ell-1/2)t\big), \,t\in[0,1], \,\ell\in \N$, which corresponds to Sobolev regularity (almost) $\beta=1$. We use $\alpha$-regular and $\tau$-scaled Gaussian and Laplace priors defined on the basis $(e_\ell)$, with $\tau=1$ fixed, and $\lambda=\alpha$ with Exponential$(1)$ hyper-prior distribution, truncated to be supported in $[\ubar{\alpha}, \bar{\alpha}]=[0.5,100]$ (recall Assumption \eqref{thm1_2}). We truncate the infinite expansions up to $L\asymp n^{1/1.5}$ which again for $\theta$ with regularity at least $0.5$ ensures that the truncation error is of lower order compared to the minimax estimation rate. Figure \ref{fig:alpha} contains the resulting posterior means and 95\% credible regions; once more, the performance of the Laplace and Gaussian hierarchical priors is similar, with Laplace giving rise to slightly narrower credible regions.

\begin{figure}
    \centering
    \includegraphics[width=0.38\textwidth]{./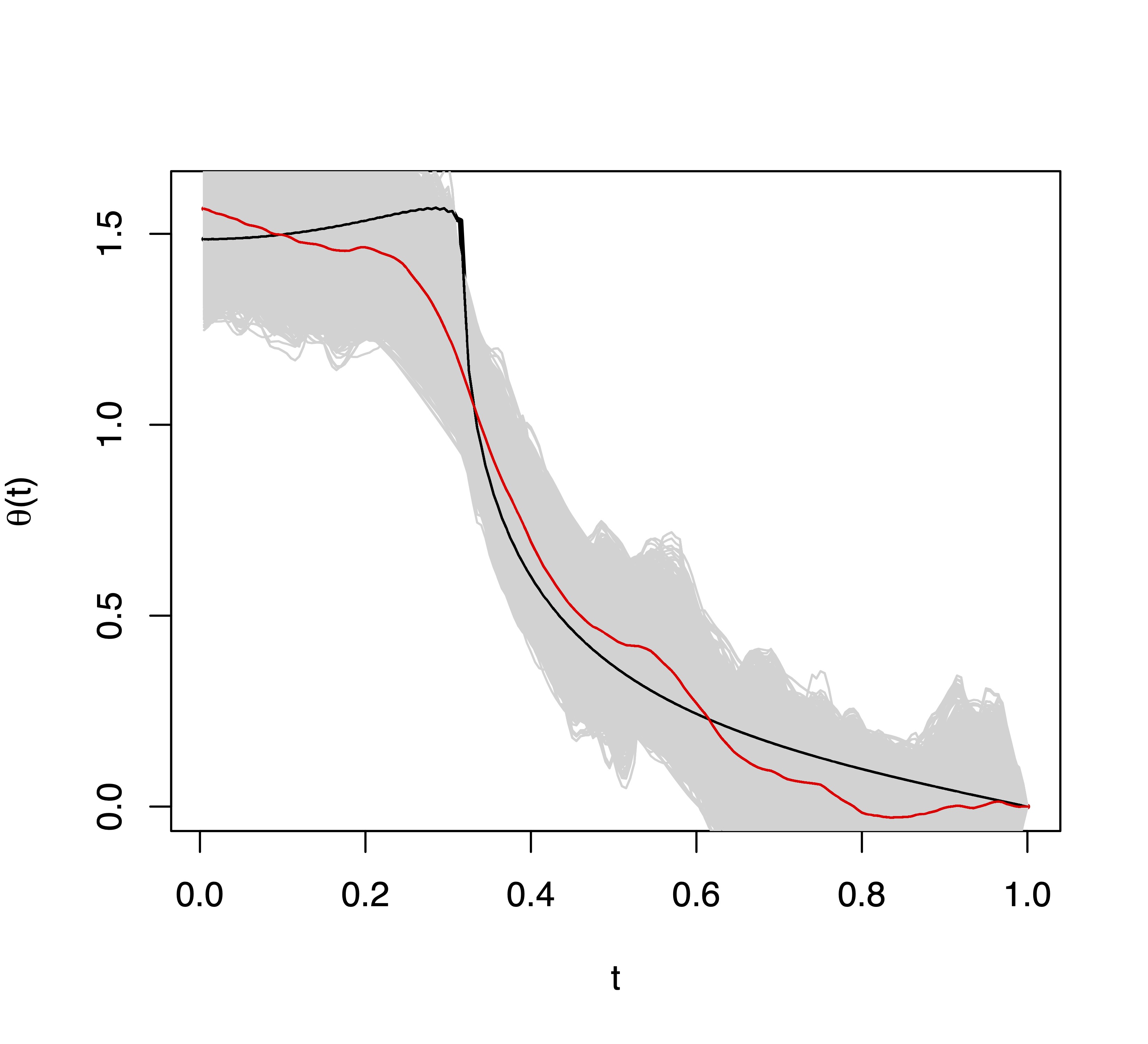}
     \includegraphics[width=0.38\textwidth]{./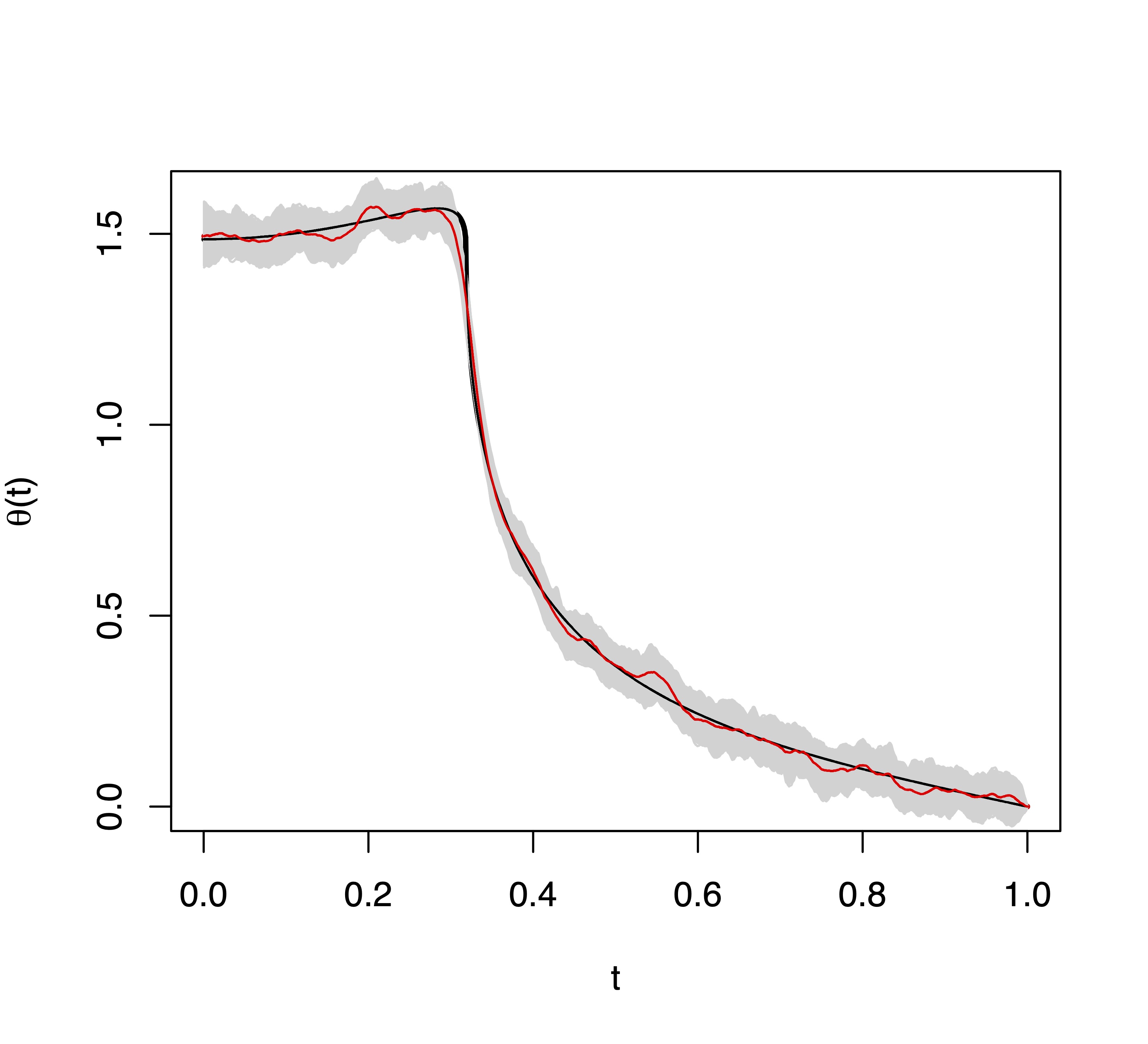}
    \includegraphics[width=0.38\textwidth]{./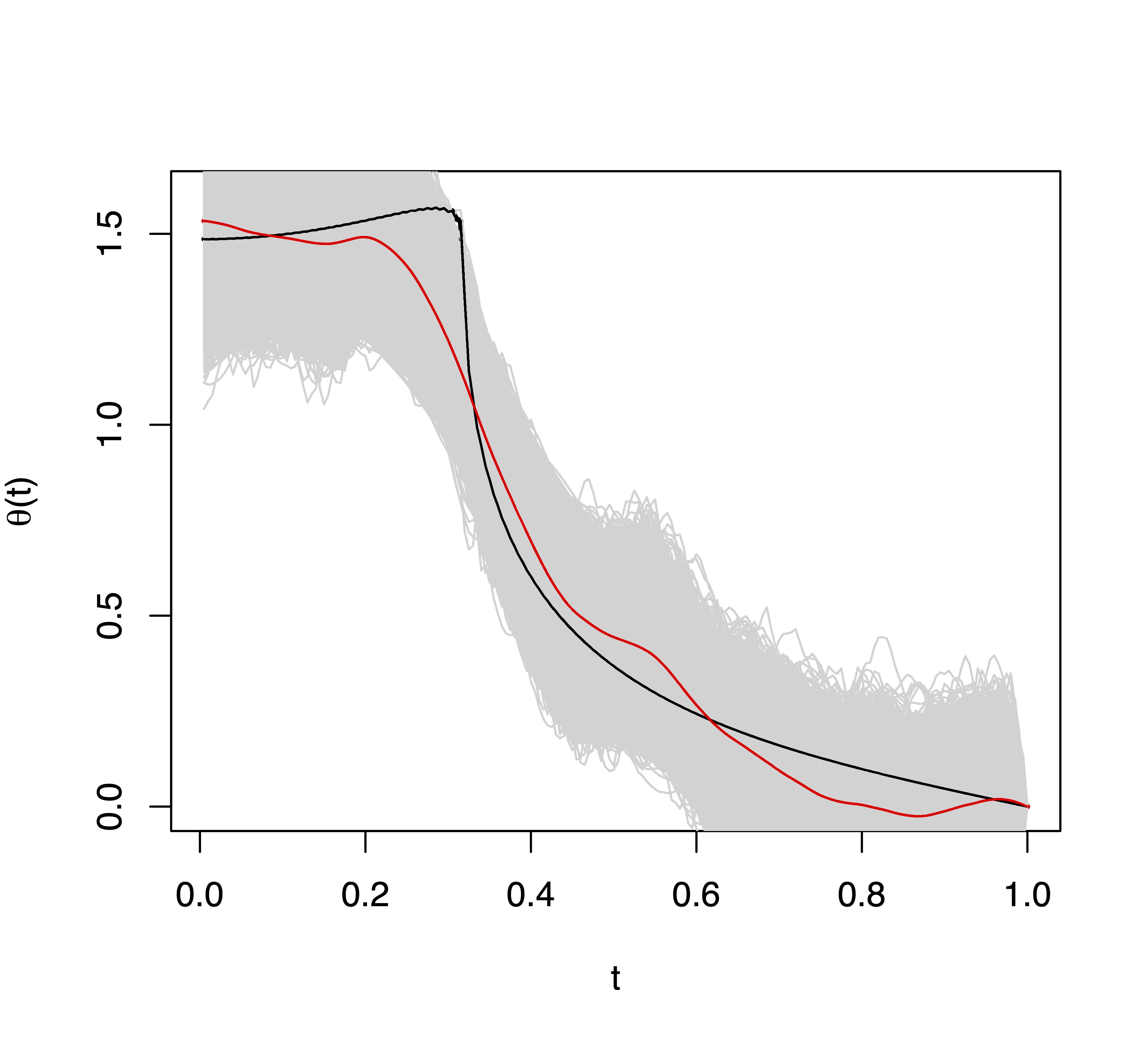}
    \includegraphics[width=0.38\textwidth]{./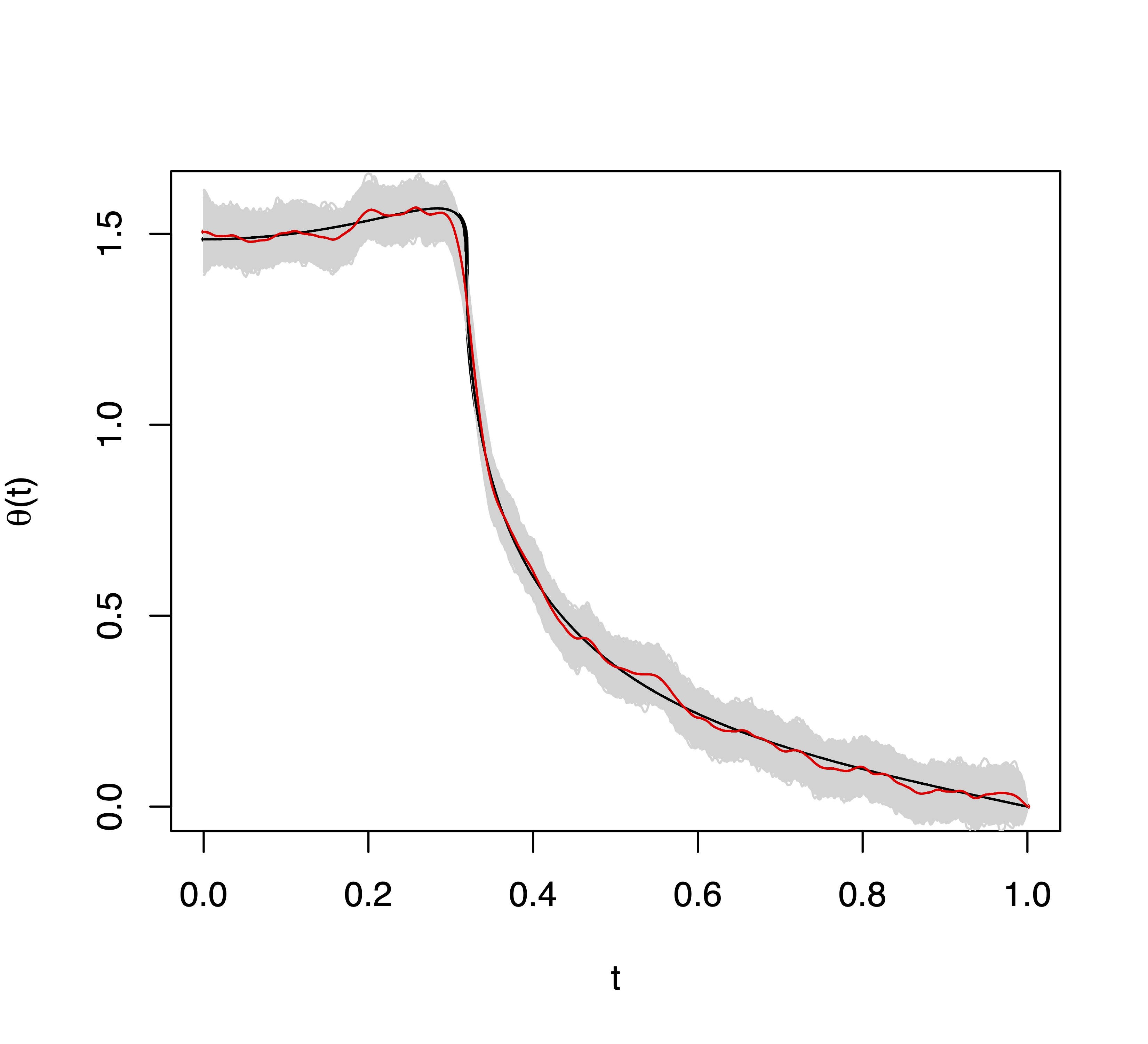}

   \caption{Top row: truth (black), hierarchical Bayes posterior means (red) and 95\% credible regions (grey) for Laplace priors with $\tau=1$ and $\lambda=\alpha$ for $n=10^3$ (left) and $n=10^5$ (right). Bottom row: truth, posterior means  and credible regions for Gaussian priors with regularities $\tau=1$ and $\lambda=\alpha$ for the corresponding noise levels.}
    \label{fig:alpha}
\end{figure}

The above experiments were implemented using the non-centered (whitened) preconditioned Crank-Nicolson within Gibbs algorithm \cite[Algorithm 4]{CDPS18}. All relevant codes are available at \url{https://bit.ly/3ns9kBA}.

%
%
%

\begin{acks}[Acknowledgments]
 The authors are deeply grateful to Botond Szabo for numerous explanations and many useful discussions regarding the general theory of \cite{RS17}. The authors are also grateful to Omiros Papaspiliopoulos for useful guidance regarding the implementation of the studied procedures. Finally, the authors thank two anonymous referees, the AE and the editor for many helpful comments.
 \end{acks}
%

\begin{supplement}
\stitle{}\vspace{-.4cm}
\sdescription{The supplement contains a number of technical results required for our proofs.}
\end{supplement}


\bibliographystyle{imsart-number} 
\bibliography{references}       

\bigskip\bigskip
\center{\large{\bf SUPPLEMENTARY MATERIAL}}\\\smallskip

We provide a number of technical results required for establishing the theory presented in the main article.

\renewcommand\thesection{\Alph{section}}
\setcounter{section}{0}

\section{The Fernique theorem for $\alpha$-regular $p$-exponential priors}\label{appn}
We formulate a result based on Christer Borell's generalization of Fernique's theorem for Gaussian measures, to the class of log-concave measures, see Theorem 3.1. in \cite{CB74}. Here, we consider the case of $\alpha$-regular $p$-exponential measures, which are known to be log-concave \cite[Proposition 2.3]{ADH21}, and track the dependence of the constants appearing in the cited theorem on the regularity parameter $\alpha$.

\begin{lem}\label{lem:fernique}
Let $\Pi_\alpha\coloneqq\Pi(\cdot\mid\alpha, \tau=1)$ be an $\alpha$-regular  $p$-exponential measure. Fix $\ubar{\alpha}>0$ and assume that $\mu$ is any weighted-$\ell_r$ norm, $r\ge 1$, which is finite almost surely with respect to $\Pi_\alpha$, $\forall \alpha \geq \ubar{\alpha}$. Then, there exist $s_1, s_2 > 0$ (depending on $\ubar{\alpha}$ and $\mu$), such that 
\begin{equation} \label{Fernique_1}
E_{\Pi_\alpha} \big[ e^{s_2 \mu(\theta)} \big] \leq s_1, \quad \forall \alpha \geq \ubar{\alpha}.
\end{equation}
\end{lem}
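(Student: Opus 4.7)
The plan is to reduce the uniform-in-$\alpha$ exponential moment bound to a uniform-in-$\alpha$ small-ball lower bound, and then apply Borell's inequality for log-concave measures. Log-concavity of each $\Pi_\alpha$ is already established in \cite[Proposition 2.3]{ADH21}, and $\mu$ is by hypothesis a seminorm on the support of $\Pi_\alpha$ (so $\{\mu\le R\}$ is symmetric and convex for every $R>0$).

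The key uniform ingredient will come from a monotonicity observation: if $\xi=(\xi_\ell)_{\ell\in\N}$ is an i.i.d. sequence with density $f_p$, then $\theta^{(\alpha)}:=(\ell^{-1/2-\alpha}\xi_\ell)_{\ell\in\N}\sim \Pi_\alpha$ and $\theta^{(\ubar\alpha)}:=(\ell^{-1/2-\ubar\alpha}\xi_\ell)_{\ell\in\N}\sim \Pi_{\ubar\alpha}$, and for $\alpha\ge\ubar\alpha$ we have $|\theta^{(\alpha)}_\ell|=\ell^{\ubar\alpha-\alpha}|\theta^{(\ubar\alpha)}_\ell|\le|\theta^{(\ubar\alpha)}_\ell|$ for every $\ell\ge 1$ almost surely. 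Since $\mu$ is a weighted-$\ell_r$ norm with nonnegative weights, this pointwise domination yields $\mu(\theta^{(\alpha)})\le\mu(\theta^{(\ubar\alpha)})$ a.s. Consequently, for every $R>0$ and every $\alpha\ge\ubar\alpha$,
\begin{equation*}
\Pi_\alpha\big(\{\theta:\mu(\theta)\le R\}\big)\ge \Pi_{\ubar\alpha}\big(\{\theta:\mu(\theta)\le R\}\big).
\end{equation*}
Since $\mu$ is $\Pi_{\ubar\alpha}$-a.s.\ finite, we can choose $R_0=R_0(\ubar\alpha,\mu)$ so that $\Pi_{\ubar\alpha}(\mu\le R_0)\ge 3/4$, and hence $\Pi_\alpha(\mu\le R_0)\ge 3/4$ uniformly in $\alpha\ge\ubar\alpha$.

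Next I would invoke Borell's inequality \cite[Theorem 3.1]{CB74}: for a symmetric convex set $A$ in a locally convex space and any log-concave Borel probability measure $\Pi$ with $\Pi(A)=\theta_0>1/2$, one has $\Pi((tA)^c)\le\theta_0((1-\theta_0)/\theta_0)^{(t+1)/2}$ for $t\ge 1$. Applied with $A=\{\mu\le R_0\}$, $\Pi=\Pi_\alpha$, and $\theta_0=3/4$ (the bound being monotone in $\theta_0$), we get
\begin{equation*}
\Pi_\alpha\big(\mu(\theta)>tR_0\big)\le \tfrac{3}{4}\,(1/3)^{(t+1)/2}, \quad t\ge 1,
\end{equation*}
with constants completely independent of $\alpha\ge\ubar\alpha$.

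Finally, I would choose $s_2>0$ small enough that $s_2 R_0<\tfrac{1}{2}\log 3$ and integrate the tail bound via the identity $E_{\Pi_\alpha}[e^{s_2\mu(\theta)}]=1+s_2\int_0^\infty e^{s_2 u}\Pi_\alpha(\mu(\theta)>u)\,du$, splitting the integral at $u=R_0$ and using the Borell bound for $u\ge R_0$. This produces a finite constant $s_1=s_1(\ubar\alpha,\mu)$ bounding $E_{\Pi_\alpha}[e^{s_2\mu(\theta)}]$ uniformly in $\alpha\ge\ubar\alpha$, as required. The only genuine obstacle is the uniform small-ball lower bound, and the monotone coupling in the first step resolves it essentially for free; everything that follows is a standard application of Borell's inequality.
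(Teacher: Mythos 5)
Your proposal is correct and follows essentially the same route as the paper's proof: the monotone coupling $|\ell^{-1/2-\alpha}\xi_\ell|\le|\ell^{-1/2-\ubar{\alpha}}\xi_\ell|$ giving the uniform small-ball bound $\Pi_\alpha(\mu\le R)\ge\Pi_{\ubar{\alpha}}(\mu\le R)$, followed by Borell's inequality from \cite[Theorem 3.1]{CB74} and integration of the resulting exponential tail. The only cosmetic difference is that the paper pushes the monotonicity through to the complement $\Pi_\alpha((tA)^c)\le\Pi_{\ubar{\alpha}}((tA)^c)$ and applies Borell to $\Pi_{\ubar{\alpha}}$, whereas you apply Borell to each $\Pi_\alpha$ directly using the uniform lower bound $3/4$ and the monotonicity of Borell's bound in $\theta_0$; both are valid.
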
 

\begin{proof}
The proof follows the proof of \cite[Theorem 3.1]{CB74}, with the difference that the cited result considers a single logarithmically-concave measure, while we consider the family of $\alpha$-regular $p$-exponential priors with $\alpha\ge\ubar{\alpha}$ simultaneously. 

Since $\mu$ is finite almost surely with respect to $\Pi_{\ubar{\alpha}}$ and since $\Pi_{\ubar{\alpha}}$ is non-degenerate, there exists a $\lambda>0$ such that 
$$\Pi_{\ubar{\alpha}}\big( \{\theta: \mu(\theta)<\lambda\} \big)\in(1/2,1).$$ 
Set $$\gamma:=\Pi_{\ubar{\alpha}}\big( \{\theta: \mu(\theta)<\lambda\} \big).$$
By the definition of $\Pi_\alpha$ and since $\mu$ is a weighted-$\ell_r$ norm with $r\ge1$, it is straightforward to see that for any $s>0$ it holds
 $$\Pi_{{\alpha}}\big( \{\theta: \mu(\theta)<s\} \big)\ge \Pi_{\ubar{\alpha}}\big( \{\theta: \mu(\theta)<s\} \big), \quad \forall \alpha\ge\ubar{\alpha}.$$ 
Let $A\coloneqq \{\theta:\mu(\theta)< \lambda \}$. Combining with \cite[Lemma 3.1]{CB74}, we thus get that  for all $t\ge 1$
\begin{equation} \label{Fernique_2} 
\Pi_{\alpha} \big( (tA)^c \big) = \Pi_{\alpha} \big( \{\theta:\mu(\theta) \geq \lambda t \} \big)\leq \Pi_{\ubar{\alpha}} \big( \{\theta:\mu(\theta) \geq \lambda t \} \big) \leq \gamma \Big( \frac{1-\gamma}{\gamma} \Big)^{\frac{1+t}{2}},
\end{equation}
where $0<(1-\gamma)/\gamma <1$. 

For any $s_2>0$, using \cite[Exercise 2.2.7.]{Durrett} and a change of variables, we get 
\begin{equation} \begin{split}
E_{\Pi_{\alpha}} &\big[ e^{s_2 \mu(\theta)} \big] =  \int_{0}^{\infty}  s_2 e^{s_2 \lambda \psi } \Pi_{\alpha} \big( \{\theta:\mu(\theta) \geq \lambda \psi  \} \big) \lambda d\psi \\
&=s_2\lambda\Big( \int_{0}^{1}   e^{s_2 \lambda \psi } \Pi_{\alpha} \big( \{\theta: \mu(\theta) \geq \lambda \psi  \} \big)  d\psi + \int_{1}^{\infty}  e^{s_2 \lambda \psi } \Pi_{\alpha} \big( \{\theta:\mu(\theta) \geq \lambda \psi  \} \big) d\psi\Big).
\end{split} \end{equation}
Let us study each of the two integrals on the above right hand side separately. 

For the first integral, we have that
$$\int_{0}^{1}  e^{s_2 \lambda\psi} \Pi_{\alpha} \big( \{\theta:\mu(\theta) \geq \lambda\psi\} \big)d\psi \leq \int_{0}^{1}  e^{s_2  \lambda\psi}  d\psi  =\frac{e^{s_2 \lambda}-1}{s_2\lambda}.$$

For the second integral, using \eqref{Fernique_2} we have
\begin{equation*} \begin{split}
\int_{1}^{\infty}  e^{s_2 \lambda \psi } \Pi_{\alpha} \big( \{\theta:\mu(\theta) \geq \lambda \psi  \} \big) d\psi  & \leq \int_{1}^{\infty}   e^{s_2 \lambda \psi } \gamma \Big( \frac{1-\gamma}{\gamma} \Big)^{\frac{1+\psi}{2}}  d\psi \\
& = \gamma  \int_{1}^{\infty} e^{s_2 \lambda \psi  + \frac{1+\psi}{2} \log \big(\frac{1-\gamma}{\gamma}\big)} d\psi \\
& =  \gamma  e^{-\frac{a}{2}}  \int_{1}^{\infty} e^{\psi \big(s_2 \lambda - \frac{a}{2}\big)} d\psi,
\end{split} \end{equation*}
where $a = \log \big(\gamma / (1-\gamma) \big)>0$. Choosing $s_2 < a / 2 \lambda,$ we have
\begin{equation*} \begin{split}
\int_{1}^{\infty}  e^{s_2 \lambda \psi } \Pi_{\alpha} \big( \{\theta:\mu(\theta) \geq \lambda \psi \} \big)  d\psi  \leq \frac{\gamma   e^{-\frac{a}{2}}}{\frac{a}{2}-s_2 \lambda}.
\end{split} \end{equation*}

Combining we have that for $s_2 < a / 2 \lambda,$ it holds
\[E_{\Pi_{\alpha}} \big[ e^{s_2 \mu(\theta)} \big]\leq\frac{s_2\lambda \gamma e^{-\frac{a}2}}{\frac{a}2-s_2\lambda}+e^{s_2\lambda}-1,\quad \forall \alpha\ge\ubar{\alpha},\]
which completes the proof.
\end{proof}

\section{Existence of $\gep_n(\lambda)$ solving \eqref{eq:en}}

\begin{lem}\label{lem:existenceenl}
Let $\Pi(\cdot \mid \at)$ be an $\alpha$-regular $\tau$-scaled $p$-exponential prior in $\Theta$, for some $\at>0$. Let $\theta_{0} \in \Theta$ and fix $K>0$. Then, $\forall n$ there exists a unique $\gep_n=\gep_{n}(\at;K,\theta_0)$, such that
\begin{equation}
\Pi \big( \theta : \norm{\theta - \theta_{0}}_2 \leq K\gep_{n} \mid \at \big) = e^{-n \gep_{n}^{2}}.
\end{equation}
Furthermore, given $\at>0$, the sequence $(\gep_{n})$ is strictly decreasing to zero.
\end{lem}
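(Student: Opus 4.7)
}
The plan is to recast \eqref{eq:en} as a root-finding problem for the function
\[
F_n(\gep) \;:=\; \Pi\bigl(\theta:\|\theta-\theta_0\|_2\le K\gep\,\big|\,\at\bigr) \;-\; e^{-n\gep^2},\qquad \gep>0,
\]
and exhibit a unique zero by proving that $F_n$ is continuous, strictly increasing, negative near $0$ and positive near $\infty$. The monotonicity in $n$ and the convergence to zero will then follow by direct comparison of the defining relations at consecutive $n$.

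First I would establish continuity and monotonicity of the map $\gep\mapsto \Pi(\|\theta-\theta_0\|_2\le K\gep\,|\,\at)$. Monotonicity is immediate from set inclusion. Continuity will follow by showing that the spheres $\{\theta:\|\theta-\theta_0\|_2=K\gep\}$ carry no prior mass: since $\Pi(\cdot|\at)$ is the law of independent random variables $(\tau\ell^{-1/2-\alpha}\xi_\ell)_\ell$ with absolutely continuous densities $f_p$, the law of $\|\theta-\theta_0\|_2^2=\sum_\ell(\tau\ell^{-1/2-\alpha}\xi_\ell-\theta_{0,\ell})^2$ is the law of an infinite sum of independent absolutely continuous summands and hence has no atoms; in particular, its distribution function is continuous in the radius. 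Since $-e^{-n\gep^2}$ is strictly increasing and continuous on $(0,\infty)$, $F_n$ is continuous and strictly increasing on $(0,\infty)$.

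Next I would handle the boundary behaviour. As $\gep\downarrow 0$, the probability term tends to $\Pi(\{\theta_0\}|\at)=0$ while $e^{-n\gep^2}\to1$, so $F_n(0^+)=-1<0$. As $\gep\to\infty$, the probability tends to $1$ while $e^{-n\gep^2}\to0$, so $F_n(\gep)\to 1>0$. Combined with the strict monotonicity, the intermediate value theorem yields a unique $\gep_n>0$ with $F_n(\gep_n)=0$, which is precisely \eqref{eq:en}.

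For the strict decrease and convergence to $0$, I would argue by contradiction using \eqref{eq:en} at two indices. If $\gep_n\ge \gep_{n-1}$, then monotonicity of $\Pi(\|\theta-\theta_0\|_2\le K\cdot|\at)$ gives $e^{-n\gep_n^2}\ge e^{-(n-1)\gep_{n-1}^2}$, i.e.\ $n\gep_n^2\le (n-1)\gep_{n-1}^2< n\gep_{n-1}^2\le n\gep_n^2$, a contradiction; hence $(\gep_n)$ is strictly decreasing. Being decreasing and nonnegative it has a limit $\gep^\ast\ge 0$, and if $\gep^\ast>0$ then $\Pi(\|\theta-\theta_0\|_2\le K\gep_n|\at)\ge \Pi(\|\theta-\theta_0\|_2\le K\gep^\ast|\at)>0$ (positivity of small-ball probabilities for $p$-exponential priors on $\ell_2$), whereas $e^{-n\gep_n^2}\le e^{-n(\gep^\ast)^2}\to 0$, contradicting \eqref{eq:en}; hence $\gep_n\downarrow 0$. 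The only mildly delicate point is the atomlessness of the radial distribution used to justify continuity of $F_n$; everything else is a routine IVT/monotonicity argument.
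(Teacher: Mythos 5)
Your proof is correct and follows essentially the same route as the paper: both reduce \eqref{eq:en} to an intermediate-value/monotonicity argument for the map $\gep\mapsto\Pi(\|\theta-\theta_0\|_2\le K\gep\,|\,\at)-e^{-n\gep^2}$, with continuity coming from the absence of prior mass on spheres and the limit $\gep_n\downarrow 0$ from positivity of small-ball probabilities (full support). The only cosmetic differences are that you justify atomlessness of the radial law directly via convolution with an absolutely continuous summand (the paper cites \cite[Lemma 6.1]{ABDH18}) and you get strict monotonicity of $F_n$ from the exponential term alone, whereas the paper additionally proves strict monotonicity of the ball-probability map; neither difference affects validity.
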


\begin{proof}
We first verify that $r(\gep) \coloneqq\Pi(\theta: \norm{\theta-\theta_{0}}_2 \leq K\gep \mid \at)$ is a continuous function of $\gep>0$. Indeed this can be checked by verifying left and right continuity using the continuity property of probability measures, where for left continuity one needs to use that  $\Pi(\theta: \norm{\theta-\theta_{0}}_2 = \gep \mid \at) = 0, \forall \gep>0$, implied by  \cite[Lemma 6.1]{ABDH18}. 

The function $r(\cdot)$ is obviously non-decreasing with $\gep$, with $\lim_{\gep \searrow 0} r(\gep)=0$ and $\lim_{\gep \nearrow \infty} r(\gep)=1$. 
In fact, $r(\cdot)$ is strictly increasing. Indeed, if not, then there would exist $\gep_1 > \gep_2$ such that $r(\gep_1) = r(\gep_2)$. In particular, there would exist a $\theta_1\in \Theta$ and a small enough radius $\delta >0$, such that $\Pi (\theta: \norm{\theta-\theta_1}_2 < \delta \mid \at)=0$. This would be a contradiction to the fact that $\text{supp}\big(\Pi(\cdot \mid \at)\big)=\Theta$, as implied by \cite[Proposition 2.10]{ADH21}.

The above considerations imply the claims in the statement, since $E_n(\gep)\coloneqq e^{-n\gep^2}$ is decreasing in both $\gep$ and $n$ with $\lim_{\gep \searrow 0} E_{n}(\gep)=1$ and $\lim_{\gep \nearrow \infty} E_{n}(\gep)=0.$
\end{proof}


\section{Technical lemmas for verifying the conditions of Corollary \ref{corRS17}}\label{sec:lemmas}
This section contains a number of technical lemmas used for verifying the assumptions of Corollary \ref{corRS17}, in order to prove our main theorems. 

The next four lemmas combined, allow the verification of conditions (A1), (A2), (B1) and (C1)-(C3). 
\begin{rem}\label{rem:taugep}In all of these lemmas, it is silently assumed that for some fixed $\tilde{C}>0$, $\gep_n=\gep_n(\alpha,\tau)$ from \eqref{eq:en} satisfies $\gep_n\leq \tilde{C}\tau$; this restriction stems from \eqref{lem2.2_3}. In particular in the proofs of our main results, we will employ these lemmas for $\tau_n\in \Lambda_n$, where $\Lambda_n:=[\ubar{\tau}_n, \t_n]$ and $\t_n>\ubar{\tau}_n>0$, hence we will need to ensure that $\gep_n\lesssim \tau_n$. We do not spell this out explicitly in the lemmas, in order to avoid an overload of technical statements.
\end{rem}


\begin{lem}\label{lem:C1}
Fix $\ubar{\alpha}$ and $s$ such that $\ubar{\alpha}>s>0$. For any $\alpha\geq\ubar{\alpha}, \tau> 0$ and $\zeta \in (0,1)$, take $\eta \geq \tilde{c}_1^{2} \big(4\zeta^{-1}\big)^{1/\alpha},$ where $\tilde{c}_1=\tilde{c}_1(\alpha)$ is as in \eqref{lem2.2_3} and define the sets 
$$\tth(\alpha, \tau)\coloneqq \frac{\zeta K}{4} \gep_{n} B_{\ell_2} + R_{n}^{\frac{p}{2}}B_{\sht} + R_{n}B_{\omat},$$
where $\gep_{n}=\gep_{n}(\alpha, \tau)$ is as in \eqref{eq:en} and 
$$R_{n} = R_n(\at)\coloneqq \big( 2\eta\tilde{K} n\gep_{n}^{2} \big)^{\frac{1}{p}}, \quad p \in [1,2].$$
Here, $\tilde{K}>0$ is a fixed constant depending only on $p$, defined in the concentration inequality \cite[Proposition 2.15]{ADH21}.

Let  $s_1=s_1(\ubar{\alpha};s)>0, s_2=s_2(\ubar{\alpha};s)>0$ be  as in Lemma \ref{lem:fernique}, for $\mu(\cdot)=\norm{\cdot}_{{H^s}}$. For $Y_n=\eta \tau n \gep_n^2/s_2$, define the sets 
\begin{equation}\label{eq:sieve}\pth(\at)\coloneqq \tth(\at)\cap Y_nB_{{H^s}},\quad\alpha\geq \ubar{\alpha}, \tau>0.\end{equation}
Then, 
\begin{equation}\label{eq:pthbnd}\Pi \big( \pth^{c}(\alpha, \tau) \mid \alpha,\tau \big) \leq 2s_1e^{-\eta n \gep_{n}^{2}}\end{equation}
and there exists a constant $M'>0$ (independent of $\alpha$, $\tau$ and $K$) such that
$$\log N \big(K\zeta \gep_{n}, \pth(\alpha, \tau), \norm{\cdot}_{2} \big) \leq M'\eta^p\tilde{c}_1^2 n \gep_{n}^2.$$
\end{lem}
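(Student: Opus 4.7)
The plan is to establish the probability bound and the entropy bound separately, as they rest on different ingredients.

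For $\Pi(\pth^c | \at)$, I would split
\[
\Pi(\pth^c \mid \at) \leq \Pi(\tth^c \mid \at) + \Pi\big(\{\|\theta\|_{H^s} > Y_n\} \mid \at\big).
\]
For the first term, I would apply the Talagrand-type concentration inequality \eqref{eq:tal} with $\gep = \zeta K \gep_n/4$ and $R = R_n$: the choice $R_n^p = 2\eta \tilde K n\gep_n^2$ makes the numerator equal $e^{-2\eta n\gep_n^2}$. For the denominator, using the upper bound in \eqref{lem2.2_3} together with \eqref{lem2.2_4} of Lemma \ref{lem:2.2b} gives
$\varphi_0(\zeta K\gep_n/4) \leq \tilde c_1^{2}(4/\zeta)^{1/\alpha} n\gep_n^2 \leq \eta n\gep_n^2$
by the assumption on $\eta$, so $\Pi(\tth^c \mid \at) \leq e^{-\eta n\gep_n^2}$. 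For the second term, I would exploit the scaling $\theta \sim \Pi(\cdot|\at)$ equivalent to $\theta = \tau\tilde\theta$ with $\tilde\theta \sim \Pi(\cdot|\alpha,1)$, so $\{\|\theta\|_{H^s}>Y_n\} = \{\|\tilde\theta\|_{H^s}>Y_n/\tau\}$. Applying Markov's inequality together with the uniform Fernique-type estimate of Lemma \ref{lem:fernique} (with $\mu = \|\cdot\|_{H^s}$, which is a weighted-$\ell_2$ norm and is a.s.\ finite under $\Pi(\cdot|\alpha,1)$ since $s<\ubar\alpha \le \alpha$), and using $Y_n = \eta \tau n\gep_n^2/s_2$, gives
$\Pi(\{\|\theta\|_{H^s}>Y_n\} \mid \at) \le s_1 e^{-s_2 Y_n/\tau} = s_1 e^{-\eta n\gep_n^2}$.
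Summing and absorbing into $2s_1$ (taking $s_1\geq 1$ if needed) yields the claim.

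For the entropy bound, since $\pth \subseteq \tth$ it suffices to cover $\tth$. Writing $\tth = \big(R_n^{p/2} B_{\sht} + R_n B_{\omat}\big) + (\zeta K\gep_n/4) B_{\ell_2}$, any $(3K\zeta\gep_n/4)$-cover of the first summand gives a $K\zeta\gep_n$-cover of $\tth$; subadditivity of log-covering numbers under sums then gives
\[
\log N(K\zeta\gep_n, \tth, \|\cdot\|_2) \leq \log N\big(\tfrac{3K\zeta}{8}\gep_n, R_n^{p/2} B_{\sht}, \|\cdot\|_2\big) + \log N\big(\tfrac{3K\zeta}{8}\gep_n, R_n B_{\omat}, \|\cdot\|_2\big).
\]
Comparing weights, $B_{\sht}$ coincides with $\tau B_{H^{\alpha+1/2}}$ and $B_{\omat}$ with $\tau B_{B^{1/p+\alpha}_{pp}}$, so the classical wavelet-based entropy bounds for Sobolev and Besov balls in $L_2$ yield
\[
\log N(K\zeta\gep_n, \tth, \|\cdot\|_2) \lesssim (\tau R_n^{p/2}/\gep_n)^{2/(1+2\alpha)} + (\tau R_n/\gep_n)^{p/(1+\alpha p)},
\]
with constants uniform for $\alpha \in [\ubar\alpha,\bar\alpha]$. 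Substituting $R_n^p = 2\eta \tilde K n\gep_n^2$ and using the lower bound $\gep_n^{1+2\alpha} \gtrsim \tilde c_1^{-\alpha} K^{-1}\tau n^{-\alpha}$ from \eqref{lem2.2_5} to cancel the $\tau$ factors, elementary algebra reduces both exponents to $2$ in $\gep_n$ and to $1$ in $n$, so that both terms are dominated by a universal multiple of $\eta^p \tilde c_1^2 n\gep_n^2$ (using $p \leq 2$ and $\alpha \in [\ubar\alpha,\bar\alpha]$ to majorize smaller $\eta$-powers by $\eta^p$, and noting that the residual $K$-powers can be absorbed by enlarging $M'$ since $K$ is fixed throughout each application).

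The main obstacle is the uniform control of constants across $\alpha \in [\ubar\alpha,\bar\alpha]$ and $\tau$ in the candidate set. Uniformity in $\alpha$ for the Fernique ingredient is exactly the content of Lemma \ref{lem:fernique}, while uniformity for the Sobolev and Besov entropy estimates is standard over bounded regularity ranges. The $\tau$-dependence is removed by \eqref{lem2.2_5}, and the implicit restriction $\gep_n \lesssim \tau$ (Remark \ref{rem:taugep}) is what permits application of the upper bound in \eqref{lem2.2_3}. The specific threshold $\eta \geq \tilde c_1^{2}(4/\zeta)^{1/\alpha}$ arises precisely from balancing the Talagrand numerator against the centered small-ball cost in the denominator and simultaneously gives enough slack in $\eta^p\tilde c_1^2$ on the right-hand side of the entropy bound to absorb all residual $\alpha$-, $\tau$- and $K$-dependent factors.
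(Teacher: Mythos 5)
Your argument for the probability bound \eqref{eq:pthbnd} is the same as the paper's: Talagrand's inequality \eqref{eq:tal} with $R_n^p=2\eta\tilde K n\gep_n^2$, the choice of $\eta$ to absorb $\varphi_0(\zeta K\gep_n/4)$ via \eqref{lem2.2_3}--\eqref{lem2.2_4}, and exponential Markov plus the uniform Fernique bound of Lemma \ref{lem:fernique} after rescaling by $\tau$. For the entropy bound, however, you take a genuinely different route. The paper first absorbs the $\sht$-ball into an enlarged $\omat$-ball, proving $R^{p/2}B_{\sht}\subseteq \gep B_{\ell_2}+a B_{\omat}$ by truncating at a level $L$ and applying H\"older's inequality, and then covers the single $\omat$-ball by a packing argument: disjoint small $\ell_2$-balls centered at the net points each carry mass at least $e^{-\norm{h_j}_{\omat}^p/p}\Pi(\frac{\zeta K}{4}\gep_n B_{\ell_2}|\at)$ by the shift inequality \cite[Proposition 2.11]{ADH21}, so the covering number is controlled by the centered small-ball probability — whose constants are already controlled uniformly by Lemma \ref{lem:centeredconstants}. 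You instead use subadditivity of log-covering numbers under Minkowski sums and invoke the classical metric entropy of Sobolev and Besov balls in $L_2$, after identifying $B_{\sht}=\tau B_{H^{\alpha+1/2}}$ and $B_{\omat}=\tau B_{B^{1/p+\alpha}_{pp}}$. The exponent bookkeeping in your route checks out (both terms reduce to multiples of $n\gep_n^2$ via \eqref{lem2.2_5}), and it is arguably more transparent; what the paper's argument buys is self-containedness and automatic uniformity of constants in $\alpha$ and $\tau$, whereas you must additionally justify that the entropy estimate for $B^{t}_{pp}$-balls in $L_2$ with $p<2$ (an Edmunds--Triebel/Birg\'e--Massart type result, valid since $1/p+\alpha>1/p-1/2$) holds with constants uniform over the relevant range of $\alpha$ — plausible but not free, and note the lemma is stated for all $\alpha\ge\ubar\alpha$, not only $\alpha$ in a compact interval.

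One point needs correcting: you propose to "absorb the residual $K$-powers by enlarging $M'$," but the statement requires $M'$ to be independent of $K$, and this matters because in the proofs of the main theorems $K$ is chosen as a function of $M'$ (namely $K^2\geq 2M'\eta^p\tilde c_1^2/c_1$), so your justification would be circular. In fact no absorption is needed: the covering radius in your decomposition is proportional to $K\gep_n$, which contributes negative powers of $K$ that exactly cancel the positive powers of $K$ entering through the lower bound \eqref{lem2.2_5}. The same cancellation occurs in the paper's computation of $a(\frac{\zeta K}{4}\gep_n,R_n)$. With that repaired, your proof is correct.
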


\begin{proof}
We use ideas from the proof of \cite[Theorem 3.1]{ADH21}. 

For the first claim, by \cite[Proposition 2.15]{ADH21} (here we denote by $\tilde{K}$ the constant $K$ in the notation of that source) we have
\begin{align*}
\Pi \big(\tth^{c}(\alpha, \tau) \mid \alpha, \tau \big) &\leq \frac{1}{\Pi \big( \frac{\zeta K}{4} \gep_{n} B_{\ell_2} \mid \at\big)} e^{-R_{n}^{p}/\tilde{K}}
= e^{\varphi_{0}(\frac{\zeta K}{4} \gep_{n})} e^{-2\eta n\gep_{n}^{2}}.
\end{align*}
Since $\eta\geq  \tilde{c}_1^{2} (4\zeta^{-1})^{1/\alpha},$ and by combining \eqref{lem2.2_3} and \eqref{lem2.2_4} (see also Remark \ref{rem:taugep}), it holds
\begin{align*}
\varphi_{0} \Big(\frac{\zeta K}{4} \gep_{n} \Big)-\eta n\gep_n^2\leq \varphi_{0} \Big(\frac{\zeta K}{4} \gep_{n} \Big)- \tilde{c}_1^{2} \big(4\zeta^{-1}\big)^{1/\alpha} n\gep_{n}^{2} 
\leq0,
\end{align*}
so that 
\[
\Pi \big(\tth^{c}(\alpha, \tau) \mid \alpha, \tau \big) \leq e^{-\eta n\gep_n^2}.
\]
By the definition of $\pth(\at)$, it thus remains to show that 
\[\Pi \big( (Y_nB_{{H^s}})^c|\at \big) \leq s_1e^{-\eta n\gep_n^2}.\] 
Indeed, using the (exponential) Markov inequality, for any $c, r>0$ we have that 
\[
\Pi(\norm{\theta}_{{H^s}}\geq r\mid\at)=\Pi(\norm{\theta}_{{H^s}}\geq r/{\tau}\mid \alpha,\tau=1)\leq \frac{E_{\alpha,1}[e^{c\norm{\theta}_{{H^s}}}]}{e^{c\frac{r}\tau}},\]
where $E_{\alpha,1}$ denotes expectation with respect to $\Pi(\cdot\mid \alpha,\tau=1)$. Using Fernique's theorem, see Lemma \ref{lem:fernique}, and by \cite[Lemma 5.2]{ADH21} which implies that the ${H^s}$-norm is almost surely finite with respect to $\Pi(\cdot\mid\at)$ for $s<\alpha$, we get that there exist $s_1=s_1(\ubar{\alpha};s)>0, s_2=s_2(\ubar{\alpha};s)>0$ such that
\[
\Pi(\norm{\theta}_{{H^s}}\geq r\mid\at)\leq s_1e^{-s_2\frac{r}\tau} \quad\forall \alpha\geq\ubar{\alpha}, \quad \tau>0.\]
Choosing $r=Y_n$, completes the proof.

For the second claim, note that since $\pth(\at)\subset\tth(\at)$ it suffices to establish the same bound for covering $\tth(\at)$ instead of $\pth(\at)$. We first show that $\forall \gep, R>0$ 
\begin{equation}\label{eq:qzapprox}
R^{\frac{p}{2}} B_{\sht} \subseteq \gep B_{\ell_2} + a B_{\omat},
\end{equation}
 where \[a=a(\gep, R) = R^{\frac{p}{2}+\frac{2-p}{2(1+2\alpha)}} \tau^{\frac{2-p}{p(1+2\alpha)}} \gep^{\frac{p-2}{p(1+2\alpha)}}.\] Indeed, let $w \in R^{\frac{p}2}B_{\sht}$, that is 
$$\sum_{\ell=1}^{\infty} \ell^{1+2\alpha}  w_{\ell}^{2} \leq \tau^{2} R^{p}.$$
Consider $h_{L} = (w_{1}, \cdots, w_{L}, 0, \cdots, 0)$, where $L$ is sufficiently large so that $\norm{h_{L}-w}_2 \leq \gep$. It holds
\begin{equation} \label{lemC.1_1}
\norm{h_{L}-w}_2^{2} = \sum_{\ell=L+1}^{\infty} w_{\ell}^{2} = \sum_{\ell=L+1}^{\infty} \ell^{1+2\alpha} \ell^{-1-2\alpha} w_{\ell}^{2}  \leq L^{-1-2\alpha}\tau^{2}R^{p}, 
\end{equation}
so that it suffices to take $L=\Big(R^p \tau^2 / \gep^2\Big)^{\frac{1}{1+2\alpha}}.$
Using H\"older inequality with $\big(2/p, \,2/(2-p) \big)$, we then get that
\begin{align*}
\norm{h_L}^{p}_{\omat} &= \tau^{-p} \sum_{\ell=1}^{L} \ell^{\frac{p}{2} +\alpha p} |w_{\ell}|^{p} \leq \tau^{-p} \bigg( \sum_{\ell=1}^{L} \ell^{1 + 2\alpha} |w_{\ell}|^{2} \bigg)^{\frac p2}  L^{\frac{2-p}{2}}\\
& \leq \tau^{-p} \big(\tau^2 R^p \big)^{\frac p2} L^{\frac{2-p}{2}} = R^{\frac{p^2}{2}} \bigg( \frac{R^p \tau^2}{\gep^2} \bigg)^{\frac{2-p}{2(1+2\alpha)}} = R^{\frac{p^2}{2} + {\frac{p(2-p)}{2(1+2\alpha)}}} \tau^{\frac{2-p}{1+2\alpha}} \gep^{\frac{p-2}{1+2\alpha}}.
\end{align*}
The inclusion \eqref{eq:qzapprox} thus holds.

We thus obtain that 
$$\tth(\alpha, \tau) \subseteq \frac{\zeta K}{2} \gep_{n} B_{\ell_2} + \bigg( a\Big(\frac{\zeta K}{4}\gep_{n}, R_{n} \Big) + R_{n} \bigg) B_{\omat},$$
where using the definition of $R_n$ and \eqref{lem2.2_5} (which we solve for $\tau$), we can bound
\begin{align*}
a\Big(\frac{\zeta K}{4}\gep_{n}, R_{n} \Big) &= R_{n}^{\frac{p}{2} + {\frac{2-p}{2(1+2\alpha)}}} \tau^{\frac{2-p}{p(1+2\alpha)}} \gep_{n}^{\frac{p-2}{p(1+2\alpha)}} \Big( \frac{\zeta K}{4} \Big)^{\frac{p-2}{p(1+2\alpha)}} \\
&\leq \big( 2\eta \tilde{K}n\gep_n^2 \big)^{\frac12+\frac{2-p}{2p(1+2\alpha)}} \big(\gep_n n^{\frac{\alpha}{1+2\alpha}} \big)^{\frac{2-p}p} \big(\tilde{c}_1K^\frac1\alpha \big)^\frac{\alpha(2-p)}{p(1+2\alpha)}\gep_n^\frac{p-2}{p(1+2\alpha)}\Big( \frac{\zeta K}{4} \Big)^{\frac{p-2}{p(1+2\alpha)}}\\
&\leq \big(2\eta\tilde{K}\big)^{\frac12+\frac{2-p}{2p(1+2\alpha)}}\tilde{c}_1^{\frac{\alpha(2-p)}{p(1+2\alpha)}}(4/\zeta)^{\frac{2-p}{p(1+2\alpha)}} \big(n\gep_n^2 \big)^\frac1p.
\end{align*}
Noting that $\eta\ge1$ and taking into account that $p\in[1,2]$ and $\alpha>0$ (which implies that the exponents $\frac{\alpha(2-p)}{p(1+2\alpha)}, \frac{2-p}{p(1+2\alpha)}$ are bounded above by 1, uniformly in $\alpha$), we can further bound
\[a\Big(\frac{\zeta K}{4}\gep_{n}, R_{n} \Big)\leq \eta\tilde{c}_1 M_1 \big(n\gep_n^2 \big)^{\frac1p},\]
where the constant $M_1$ does not depend on $\alpha, \tau, K$.
We thus get that 
\begin{equation} \label{lemC.1_2}
\tth(\alpha, \tau) \subseteq \frac{\zeta K}{2} \gep_{n} B_{\ell_2} + \eta \tilde{c}_1M_2 \big(n\gep_n^2 \big)^{\frac1p} B_{\omat},
\end{equation}
for some constant $M_2$ again independent of  $\alpha,\tau, K$, where we used that $\tilde{c}_1\geq1$ and $\eta^{\frac1p}\leq \eta$ since $\eta\geq1$.

Now, let $h_{1}, \cdots, h_{N} \in \eta \tilde{c}_1M_2 \big(n\gep_n^2 \big)^{\frac1p} B_{\omat}$ be $\zeta K  \gep_{n} /2 $-apart in $\norm{\cdot}_2$. Then, the balls $h_j + \frac{\zeta K }{4} \gep_{n} B_{\ell_2}$ are disjoint, and hence by \cite[Proposition 2.11]{ADH21} we obtain
\begin{equation} \label{lemC.1_3}
1 \geq \sum_{j=1}^{N} \Pi \Big( h_j + \frac{\zeta K}{4} \gep_{n} B_{\ell_2} |\at\Big) \geq \sum_{j=1}^{N} e^{-\frac{\norm{h_j}_{\omat}^{p}}{p}} \Pi \Big( \frac{\zeta K}{4} \gep_{n} B_{\ell_2} |\at\Big) \geq N e^{-\frac{\eta^p\tilde{c}_1^p M_2^p n\gep_n^2}{p} -\varphi_{0} \big(\frac{\zeta K}{4} \gep_{n} \big)}.
\end{equation}
If the set of points $h_{1}, \cdots, h_{N}$ is maximal in $\eta \tilde{c}_1M_2 \big(n\gep_n^2 \big)^{\frac1p} B_{\omat}$, then the balls $h_j + \frac{\zeta K}{2} \gep_{n} B_{\ell_2}$ cover $\eta\tilde{c}_1M_2 \big(n\gep_n^2 \big)^{\frac1p} B_{\omat}$, and we get that
\begin{equation} \label{lemC.1_4}
\tth(\alpha, \tau) \subset \bigcup_{j=1}^{N} (h_j + \zeta K \gep_{n} B_{\ell_2}).
\end{equation}
Combining \eqref{lemC.1_3} and \eqref{lemC.1_4} we obtain 
\[
N \big(\zeta K\gep_{n}, \tth(\alpha, \tau), \norm{\cdot}_2 \big) \leq N \leq e^{\frac{\eta^p \tilde{c}_1^p M_2^p n\gep_n^2}{p} + \varphi_{0}(\frac{\zeta K}{4} \gep_{n})},\]
hence, using  \eqref{lem2.2_3} and \eqref{lem2.2_4} as for the first claim, we get \[\log N \big(\zeta K \gep_{n}, \tth(\alpha, \tau), \norm{\cdot}_2 \big) \leq \frac{\eta^p \tilde{c}_1^pM_2^p n\gep_n^2}{p} + \tilde{c}_1^2\big(\frac4{\zeta}\Big)^{\frac1\alpha}n \gep_{n}^{2}.\]
Taking into account that $\alpha\geq\ubar{\alpha}$, the second claim follows.
\end{proof}

\begin{rem}\label{rem:eta}
A careful examination of the sharp results of \cite{FA07}, shows that for all $p\in[1,2]$, $\tilde{c}_1$ from \eqref{lem2.2_3}  blows-up linearly as $\alpha\to\infty$. In particular, restricting $\alpha\in[\ubar{\alpha},\bar{\alpha}]$ for any $\bar{\alpha}>\ubar{\alpha}>0$, enables a uniform choice of $\tilde{c}_1$, hence also a uniform choice of $\eta$ in the last result. We study this in Lemma \ref{lem:centeredconstants} below.
\end{rem}

\begin{rem}\label{rem:intersect}
The set $\tth(\lambda)$ is a natural candidate for the sieve set due to Talagrand's inequality \eqref{eq:tal}. Here, we  intersect $\tth(\lambda)$ with a ball in a Sobolev space $H^s$ for $s>0$ which can be arbitrarily small, with a suitably growing radius as $n\to\infty$. The additional regularity infused on the sieve set $\pth$, will allow us to uniformly control small perturbations of its elements, resulting in a uniform control of the data-dependence of the hyper-parameter (see the proofs of Lemmas \ref{lem:E1} and \ref{lem:E3} below). This technique is inspired by the recent literature on rates of contraction for PDE inverse problems, see for example \cite{N20, GN20, AW21}, where the sieve set is intersected with a ball of sufficiently large (but fixed) radius in some suitably regular space (there the motivation is different: it allows the exploitation on the one hand of local-Lipschitz assumptions on the forward operator to get rates of contraction for the forward problem and on the other hand of stability estimates to pass from "forward" to "inverse" rates).
\end{rem}


\begin{lem}\label{lem:E1}
Let $\t_n\to\infty$, and $s,\ubar{\alpha}, \bar{\alpha}$ such that $\bar{\alpha}>\ubar{\alpha}>s>0$. Recall $\pth(\at)$ from Lemma \ref{lem:C1}. Then for $u_n\lesssim n^{-5/2}\t_n^{-1}$, we have 
\begin{equation}\label{eq:E1}
\sup_{\substack{\ubar{\alpha}\leq\alpha\leq\bar{\alpha} \\\ 0<\tau<\t_n}}\;\sup_{\theta\in \pth(\at)} Q^\theta_{\lambda,n}(\mX)=O(1).
\end{equation}
The bound holds for $\lambda=\tau$ or $\alpha$ or $(\at)$ and the loss $\rho$ in the definition of $Q^\theta_{\lambda,n}$ is as in \eqref{eq:rhodefn}.
\end{lem}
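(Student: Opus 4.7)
The plan is to reduce the bound on $Q^\theta_{\lambda,n}(\mX)$ to a controlled exponential moment of the supremum of a Gaussian process by the change of measure $E_0^n[Y\, p^n_\theta]=E_\theta^n[Y]$. Setting $\theta':=\psi_{\lambda,\lambda'}(\theta)$,
\[
Q^\theta_{\lambda,n}(\mX) = E_0^n\sup_{\rho(\lambda,\lambda')\leq u_n} p^n_{\theta'}(X) = E_\theta^n \sup_{\rho(\lambda,\lambda')\leq u_n}\frac{p^n_{\theta'}}{p^n_\theta}(X).
\]
From the explicit form of the white noise likelihood \eqref{eq:lr} one computes
\[
\log\frac{p^n_{\theta'}}{p^n_\theta}(X) = n\langle X-\theta,\theta'-\theta\rangle_2 - \frac{n}{2}\|\theta'-\theta\|_2^2 =: \eta_{\lambda'} - \frac{n}{2}\|\theta'-\theta\|_2^2,
\]
and under $P_\theta^n$, $\eta_{\lambda'}$ is centered Gaussian with variance $n\|\theta'-\theta\|_2^2$. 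Since $\lambda'=\lambda$ lies in the supremum range and makes the quadratic term vanish, $\sup_{\lambda'}(\eta_{\lambda'}-\tfrac{n}{2}\|\theta'-\theta\|_2^2)\leq \sup_{\lambda'}\eta_{\lambda'}$, so it remains to show $E_\theta^n \exp(\sup_{\lambda'}\eta_{\lambda'}) = O(1)$ uniformly over $\theta\in\pth(\at)$ and the stated range of $(\at)$.

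I would next invoke Borell's concentration inequality for suprema of Gaussian processes in the form $E_\theta^n\exp(\sup_{\lambda'}\eta_{\lambda'}) \leq \exp\bigl(E_\theta^n\sup_{\lambda'}\eta_{\lambda'}+\tfrac12\sup_{\lambda'}\mathrm{Var}(\eta_{\lambda'})\bigr)$, and bound the two RHS quantities via deterministic Lipschitz estimates on $\lambda'\mapsto\theta'-\theta$. For $\lambda=\tau$, $\theta'-\theta=((\tau'/\tau)-1)\theta$ with $|(\tau'/\tau)-1|\leq 2u_n$, so $\|\theta'-\theta\|_2\leq 2u_n\|\theta\|_2$. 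For $\lambda=\alpha$, the elementary inequality $|j^{\alpha-\alpha'}-1|\leq C|\alpha-\alpha'|(\log j)j^{|\alpha-\alpha'|}$ combined with $(\log j)^2 j^{2|\alpha-\alpha'|}\leq C_s j^{2s}$, valid for $u_n$ sufficiently small and any fixed $s\in(0,\ubar{\alpha})$, reduces control of the perturbation to $\|\theta\|_{H^s}$, which is bounded by $Y_n$ on $\pth(\at)$ by construction. The case $\lambda=(\at)$ combines both and yields $\|\theta'-\theta\|_2\lesssim u_n Y_n$ uniformly, hence $\sup_{\lambda'}\mathrm{Var}(\eta_{\lambda'})\lesssim n u_n^2 Y_n^2$. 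For $E_\theta^n\sup_{\lambda'}\eta_{\lambda'}$ I would apply Dudley's entropy integral to this one- or two-parameter Gaussian family; the same Lipschitz estimate gives an intrinsic-metric bound $d(\lambda_1',\lambda_2')\leq C\rho(\lambda_1',\lambda_2')\sqrt{n}Y_n$, whence covering numbers $N_d(\varepsilon)\lesssim u_n\sqrt{n}Y_n/\varepsilon$ and $E_\theta^n\sup_{\lambda'}\eta_{\lambda'}\lesssim u_n\sqrt{n}Y_n$. Plugging in $Y_n=\eta\tau n\gep_n^2/s_2\lesssim \t_n n\gep_n^2$ and using $u_n\lesssim n^{-5/2}\t_n^{-1}$ together with the standing bound $\gep_n\lesssim 1$ makes both terms $O(1)$ uniformly over $\alpha\in[\ubar{\alpha},\bar{\alpha}]$ and $\tau\in(0,\t_n)$, which gives the claim.

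The hardest step will be the $\lambda=\alpha$ case (and the $\alpha$-component of $\lambda=(\at)$), because $\psi_{\alpha,\alpha'}$ is nonlinear in $\alpha'$ and acts by multiplication with the high-frequency-amplifying factor $j^{\alpha-\alpha'}$. The extra $H^s$-regularity for some fixed $s\in(0,\ubar{\alpha})$ that is built into the sieve $\pth(\at)$ via the intersection with $Y_n B_{H^s}$, cf.\ Remark \ref{rem:intersect}, is exactly what tames these multipliers uniformly in $j$ and in $\alpha\in[\ubar{\alpha},\bar{\alpha}]$. Uniformity of all constants in $(\at)$ follows from $s$ and $\ubar{\alpha},\bar{\alpha}$ being fixed and $u_n\to 0$; no new concentration estimate specific to $p$-exponential priors is needed here, because the randomness in $\eta_{\lambda'}$ is the Gaussian noise of the model, not the prior.
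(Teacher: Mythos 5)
Your argument is correct and arrives at exactly the same quantitative requirement as the paper's proof, namely $u_n\sqrt{n}\,Y_n=O(1)$ with $Y_n\lesssim \t_n n\gep_n^2\lesssim \t_n n$, but by a genuinely different route. The paper factorizes the Gaussian likelihood over the coordinates $j$, bounds each univariate integral $\int_\R\sup_{\rho(\lambda,\lambda')\le u_n}\sqrt{n/2\pi}\,e^{-n(x_j-\psi_{\lambda,\lambda'}(\theta_j))^2/2}\,dx_j$ by $1+\sqrt{n/2\pi}\,\big|\overline{\psi}_{\lambda,j}^{\theta}-\underline{\psi}_{\lambda,j}^{\theta}\big|$ and multiplies up, which forces it to control the $\ell_1$-type quantity $\sum_j j^{s'-1/2}|\theta_j|$ (a $B^{s'}_{11}$-norm) via an embedding of $H^s$. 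You instead change measure to $P^n_\theta$, discard the nonpositive quadratic term, and treat what remains as the exponential moment of the supremum of the centered Gaussian process $\eta_{\lambda'}=n\ip{X-\theta}{\psi_{\lambda,\lambda'}(\theta)-\theta}$ indexed by the $\rho$-ball, which Borell--TIS plus Dudley chaining reduce to $\sup_{\lambda'}\mathrm{Var}(\eta_{\lambda'})\lesssim nu_n^2Y_n^2$ and $E\sup_{\lambda'}\eta_{\lambda'}\lesssim u_n\sqrt{n}Y_n$. Both arguments rest on the same Lipschitz estimate $\norm{\psi_{\lambda,\lambda'}(\theta)-\theta}_2\lesssim u_n\norm{\theta}_{H^s}$ and on the $H^s$-ball built into $\pth(\at)$ (cf.\ Remark \ref{rem:intersect}), and both need $\eta=\eta(\alpha)$ and the Fernique constants to be uniform over $[\ubar{\alpha},\bar{\alpha}]$, as in Remark \ref{rem:eta}. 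What your route buys is that it only ever requires $\ell_2$-control of the perturbation (the multiplier bound $j^{2u_n}(\log j)^2\lesssim j^{2s}$ sits inside a square-summable norm), so it bypasses the $H^s\hookrightarrow B^{s'}_{11}$ embedding with $s'>1/2$ used in the paper, which is an $\ell_1$-summability demand that is delicate when $s$ is small; what the paper's route buys is elementariness, since no Gaussian-process concentration or chaining is invoked. Two points you should make explicit: separability/continuity of $\lambda'\mapsto\eta_{\lambda'}$ so that Borell--TIS and Dudley apply (immediate since $\lambda'\mapsto\psi_{\lambda,\lambda'}(\theta)$ is continuous into $\ell_2$ for $\theta\in H^s$ and $u_n<s$), and the uniform bound $\gep_n(\at)\lesssim1$ over the candidate set, which follows from \eqref{gepn_t} and is also used implicitly in the paper's final step.
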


\begin{proof}
The proof follows the lines of the proof of \cite[Lemma E.1]{RS17} which considers $\theta\in \R^n$ in the non-parametric regression setting. 
Apart from considering the white noise model, the main difference here is that we need to additionally control the tail of the sequence $\theta\in\ell_2$, and to this end we restrict the inner supremum on the set $\pth$, which in particular implies that $\theta$ has a certain minimum regularity, $\theta\in Y_nB_{H^s}$. We crucially exploit this regularity. Furthermore, we need to study the case $\lambda=(\at)$. We in fact only study that case, while the cases $\lambda=\tau$ or $\alpha$ follow with obvious modifications.

Assume $\lambda=(\alpha,\tau)$, and recall the loss function $\rho(\lambda, \lambda') = |\log \tau - \log \tau'| + | \alpha - \alpha' |$ from \eqref{eq:rhodefn}. We introduce the notation
$$\underline{\psi}_{\lambda, j}^{\theta} \coloneqq \inf_{\rho(\lambda, \lambda') \leq u_n} \psi_{\lambda, \lambda'} (\theta_{j}),$$
$$\overline{\psi}_{\lambda, j}^{\theta} \coloneqq \sup_{\rho(\lambda, \lambda') \leq u_n} \psi_{\lambda, \lambda'} (\theta_{j}),$$
where the transformation $\psi$ introduced in  \eqref{eq:psi} is given as $\psi_{\lambda, \lambda'} (\theta_j) = \frac{\tau'}{\tau} j^{\alpha-\alpha'} \theta_{j}$. We then have that

\begin{align*}
	\underline{\psi}_{\lambda, j}^{\theta} 
	&=\inf_{|\log \tau - \log \tau'| + | \alpha - \alpha' | \leq u_n} \frac{\tau'}{\tau}  j^{\alpha-\alpha'} \theta_{j}\\
	&\geq \inf_{\substack{|\log \tau - \log \tau'| \leq u_n \\ | \alpha - \alpha' | \leq u_n}} \frac{\tau'}{\tau}  j^{\alpha-\alpha'} \theta_{j} =  		\inf_{|\log \tau - \log \tau'| \leq u_n} \Big( \frac{\tau'}{\tau} \inf_{|\alpha-\alpha'| \leq u_n}  j^{\alpha-\alpha'}  \theta_j \Big) \\ 	
	&\geq  \inf_{|\log \tau - \log \tau'| \leq u_n} \frac{\tau'}{\tau} j^{-\sign(\theta_j) u_n}  \theta_j=j^{-\sign(\theta_j) u_n}  e^{-			\sign(\theta_j) u_n} \theta_j.
\end{align*}
	Similarly,
\begin{align*}
	\overline{\psi}_{\lambda, j}^{\theta} \leq j^{\sign(\theta_j) u_n}  e^{\sign(\theta_j) u_n}\theta_j.
\end{align*}	 
By triangle inequality, taking into account that $u_n\to0$, we obtain that for sufficiently large $n$ it holds
\begin{align}\label{eq:E1a}
\Big|  \underline{\psi}_{\lambda, j}^{\theta} - \overline{\psi}_{\lambda, j}^{\theta} \Big| \nonumber
&\leq \Big| j^{\sign(\theta_j) u_n}  e^{\sign(\theta_j) u_n}\theta_j-j^{-\sign(\theta_j) u_n}  e^{-\sign(\theta_j) u_n}\theta_j \Big| \nonumber\\
&\leq \Big|e^{-\sign(\theta_j)u_n}\big(j^{-\sign(\theta_j) u_n} -j^{\sign(\theta_j) u_n} \big)\theta_j\Big|+\Big|j^{\sign(\theta_j) u_n}  \theta_j(e^{-\sign(\theta_j) u_n}-e^{\sign(\theta_j) u_n})\Big|\nonumber\\
&\leq \Big(2\big|j^{-u_n} -j^{ u_n} \big|+j^{u_n}\big|e^{-u_n}-e^{u_n}\big|\Big)|\theta_j|\nonumber\\
&\leq \Big(2j^{u_n}(\log j^{2u_n})+2j^{u_n}u_ne^{u_n}\Big)|\theta_j\nonumber|\\
&\leq 10j^{s'-1/2}u_n|\theta_j|,\end{align}
where for the second to last bound we used the inequality $1- 1/x \leq \log x, \forall x>0$ twice and in the last bound we fixed $s'$ such that $1/2<s'<s+1/2$.

By the definition of $Q^{\theta}_{\lambda, n}$ \big(see \eqref{eq:Q}, \eqref{eq:lr}\big), we have
\begin{align}\label{lemE.1_3l}
Q^{\theta}_{\lambda, n}(\mX) &= \int_{\mX} \sup_{\rho(\lambda, \lambda') \leq u_n} \exp\Big(n\ip{{X}}{\psi_{\lambda, \lambda'} (\theta)}-\frac{n}2\norm{\psi_{\lambda, \lambda'} (\theta)}_{2}^2\Big)dP^n_0({X})\nonumber\\
&\leq\prod_{j=1}^\infty\int_{\R}\sup_{\rho(\lambda, \lambda') \leq u_n} \sqrt{\frac{n}{2\pi}} \ e^{-\frac{n}2\big(x_j-\psi_{\lambda, \lambda'} (\theta_j)\big)^2}dx_j,
\end{align}
where we can bound the one-dimensional integrals as follows:
\begin{align*}
 &\int_{\R} \sup_{\rho(\lambda, \lambda') \leq u_n} \sqrt{\frac{n}{2\pi}} \ e^{-\frac{n}2\big(x_j-\psi_{\lambda, \lambda'} (\theta_j)\big)^2}dx_j \\
 &\leq \int_{x_j<\underline{\psi}_{\lambda, j}^{\theta} }\sqrt{\frac{n}{2\pi}} \ e^{-\frac{n}2\big(x_j-\underline{\psi}_{\lambda, j}^{\theta}\big)^2}dx_j+\int_{x_j>\overline{\psi}_{\lambda, j}^{\theta} }\sqrt{\frac{n}{2\pi}} \ e^{-\frac{n}2\big(x_j-\overline{\psi}_{\lambda, j}^{\theta}\big)^2}dx_j+\int_{\underline{\psi}_{\lambda, j}^{\theta}}^{\overline{\psi}_{\lambda, j}^{\theta}}\sqrt{\frac{n}{2\pi}}dx_j\\
 &\leq 1 + \sqrt{\frac{n}{2\pi}} \ \Big|  \underline{\psi}_{\lambda, j}^{\theta} - \overline{\psi}_{\lambda, j}^{\theta} \Big| \\
&\leq 1 + 4  u_n  \sqrt{n} \ j^{s'-1/2}|\theta_j|.
\end{align*}
Using the embedding of $H^s$ into $B^{s'}_{11}$ (implied by $s'<s+1/2$, see for example \cite[Theorem 3.3.1]{T83}), we have that there exists a positive constant $c>0$ such that
\begin{equation} \label{lemE.1_6l}
Q^{\theta}_{\lambda, n}(\mX)  \leq e^{4  u_n  \sqrt{n} \sum_{j=1}^\infty j^{s'-1/2}|\theta_j|}=e^{4u_n\sqrt{n}\norm{\theta}_{B^{s'}_{11}}}\leq e^{cu_n\sqrt{n}\norm{\theta}_{H^s}},
\end{equation}
for any $\alpha>s>0$ and any $\tau>0$.

For $\alpha\geq\ubar{\alpha}>s>0$ and $\theta\in\pth(\at)$, the last bound gives
\begin{equation} \label{lemE.1_6la}
Q^{\theta}_{\lambda, n}(\mX)   \leq e^{c u_n n^{1/2} Y_n}=e^{c\frac{\eta}{s_2} \tau u_n n^{3/2}\eps_n^2}.
\end{equation}
Taking into account Remark \ref{rem:eta} which says that $\eta=\eta(\alpha)$ remains bounded for $\alpha\in[\ubar{\alpha},\bar{\alpha}]$, as well as Lemma \ref{lem:fernique} which says that $s_2$ depends only on the fixed $\ubar{\alpha}$ and $s<\ubar{\alpha}$, we have that the exponent on the right hand side is bounded above by a constant multiple of $u_n\t_nn^{3/2}$, and thus remains bounded for the assumed choice of $u_n$.
\end{proof}


\begin{lem}\label{lem:E2}
Consider the white noise model. Let $\t_n\to\infty$ and $s,\ubar{\alpha}$ such that $\ubar{\alpha}>s>0$. Let $\gep_n=\gep_n(\at)$ and recall $\pth(\at)$ from Lemma \ref{lem:C1}. Consider any of the settings:

\begin{enumerate}
\item[i)] $\lambda=\tau$, where $0<\tau<\t_n$ and $\alpha>0$ is fixed, $u_n\lesssim \t_n^{-1}n^{-1}$;\smallskip
\item[ii)] $\lambda=\alpha$, where $\alpha\geq\ubar{\alpha}$ and $\tau>0$ is fixed, $u_n\lesssim n^{-1}$;\smallskip
\item[iii)] $\lambda=(\at)$, where $0<\tau<\t_n$ and $\alpha\geq\ubar{\alpha}$, $u_n\lesssim \t_n^{-1}n^{-1}$.
\end{enumerate}

Then there exists $C>0$ such that for $\eta$ as in Lemma \ref{lem:C1}, it holds
\begin{equation} \label{eq:lemE.2} \int_{\pth^c(\at)} Q^{\theta}_{\lambda, n}(\mX) \Pi(d\theta \mid \alpha,\tau) \leq Ce^{-\frac{\eta}{2}n\gep_{n}^{2}}.\end{equation}
The constant $C$ is independent of $\tau$, while in the settings (ii) and (iii), it can be chosen uniformly over $\alpha>\ubar{\alpha}$.
\end{lem}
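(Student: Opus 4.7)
The plan is to apply Cauchy--Schwarz to decouple the integrand from the integration region, control the probability of $\pth^c$ via the estimate of Lemma \ref{lem:C1}, and bound the second-moment factor of $Q^{\theta}_{\lambda,n}(\mX)$ using a pointwise estimate borrowed from the proof of Lemma \ref{lem:E1} together with the Fernique-type bound of Lemma \ref{lem:fernique}.

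First I would observe that
\[
\int_{\pth^{c}(\at)} Q^{\theta}_{\lambda, n}(\mX)\, d\Pi(\theta\mid\at)
\leq \Big(\int_{\Theta} Q^{\theta}_{\lambda, n}(\mX)^{2}\, d\Pi(\theta\mid\at)\Big)^{1/2} \Pi\big(\pth^{c}(\at)\mid\at\big)^{1/2}.
\]
By Lemma \ref{lem:C1}, the second factor is at most $(2s_{1})^{1/2} e^{-\eta n\gep_{n}^{2}/2}$, which already produces the desired decay with the correct rate. It thus remains to bound the first factor by a constant which is independent of $\tau$ and, in settings (ii) and (iii), uniform in $\alpha\geq\ubar{\alpha}$.

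For the second moment, I would reuse the argument leading to display \eqref{lemE.1_6l} in the proof of Lemma \ref{lem:E1}. That derivation only used the inequality $|\underline{\psi}_{\lambda,j}^{\theta}-\overline{\psi}_{\lambda,j}^{\theta}|\lesssim j^{s'-1/2}u_{n}|\theta_{j}|$ (or its simpler analogue when $\lambda=\tau$), Fubini to reduce to a product of one-dimensional Gaussian integrals, and the embedding $H^{s}\hookrightarrow B^{s'}_{11}$ for some $s'\in(1/2, s+1/2)$; none of this relied on $\theta\in\pth(\at)$, so the bound $Q^{\theta}_{\lambda,n}(\mX)\leq \exp(c u_{n}\sqrt{n}\,\|\theta\|_{H^{s}})$ holds for every $\theta\in\Theta$. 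Squaring and integrating, and using that under $\Pi(\cdot\mid\at)$ we have the distributional identity $\theta \stackrel{d}{=} \tau v$ with $v\sim \Pi_{\alpha}:=\Pi(\cdot\mid\alpha,1)$, one obtains
\[
\int_{\Theta} Q^{\theta}_{\lambda, n}(\mX)^{2}\, d\Pi(\theta\mid\at) \leq E_{\Pi_{\alpha}}\!\left[e^{2c\tau u_{n}\sqrt{n}\,\|v\|_{H^{s}}}\right].
\]
Now I would invoke Lemma \ref{lem:fernique} with $\mu(v)=\|v\|_{H^{s}}$ (which is finite $\Pi_{\alpha}$-a.s. for every $\alpha\geq\ubar{\alpha}>s$ by \cite[Lemma 5.2]{ADH21}), to produce constants $s_{1},s_{2}>0$ depending only on $\ubar{\alpha}$ and $s$ such that $E_{\Pi_{\alpha}}[e^{s_{2}\|v\|_{H^{s}}}]\leq s_{1}$ uniformly in $\alpha\geq\ubar{\alpha}$.

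The final step is to check that the prefactor in the exponent is absorbed, i.e. that $2c\tau u_{n}\sqrt{n}\leq s_{2}$ for all large $n$ and all $\tau$ in the allowed range. In setting (i), $\tau<\t_{n}$ and $u_{n}\lesssim \t_{n}^{-1}n^{-1}$ gives $2c\tau u_{n}\sqrt{n}\lesssim n^{-1/2}\to 0$; setting (ii) has $\tau$ fixed and $u_{n}\lesssim n^{-1}$, yielding the same decay uniformly in $\alpha$; setting (iii) combines both bounds and again gives $2c\tau u_{n}\sqrt{n}\to 0$ uniformly over $\tau<\t_{n}$ and $\alpha\geq\ubar{\alpha}$. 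Consequently the second-moment factor is bounded by $s_{1}$ (or another absolute constant) uniformly, and combining with the Cauchy--Schwarz decomposition yields the claim with $C=(2s_{1}^{2})^{1/2}$. The main technical nuisance, rather than a real obstacle, is the bookkeeping that (a) the pointwise bound on $Q^{\theta}_{\lambda,n}(\mX)$ from Lemma \ref{lem:E1} is indeed valid outside the sieve set, and (b) the Fernique constants $s_{1},s_{2}$ can be chosen independently of $\alpha$ on $[\ubar{\alpha},\infty)$, both of which are already in place from Lemmas \ref{lem:E1} and \ref{lem:fernique}.
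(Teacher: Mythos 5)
Your proposal is correct and follows essentially the same route as the paper's proof: the pointwise bound $Q^{\theta}_{\lambda,n}(\mX)\leq \exp(cu_n\sqrt{n}\,\|\theta\|_{H^s})$ from the proof of Lemma \ref{lem:E1} (valid for all $\theta$), Cauchy--Schwarz against $\Pi(\pth^c\mid\at)^{1/2}$ controlled by Lemma \ref{lem:C1}, the rescaling $\theta\stackrel{d}{=}\tau v$, and Lemma \ref{lem:fernique} with the exponent check $2c\tau u_n\sqrt{n}\leq s_2$. The only cosmetic difference is the order in which Cauchy--Schwarz and the pointwise bound are applied, which yields the identical second-moment integral.
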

\begin{proof}
This lemma corresponds to \cite[Lemma E.2]{RS17}, however for the proof we use a different technique relying on the Fernique theorem, see Lemma \ref{lem:fernique}. We prove here the case $\lambda=(\at)$ which is based on the estimate \eqref{lemE.1_6l}, while the other two cases follow easily with obvious modifications. 

Let $\lambda=(\at)$ where $\alpha\geq\ubar{\alpha}$ and $0<\tau<\t_n$. By \eqref{lemE.1_6l} (which holds for any $\theta$) and Cauchy-Schwarz inequality, we get that
\begin{align*}
\int_{\pth^c}Q^\theta_{\alpha, \tau, n}(\mX)\Pi(d\theta|\alpha,\tau) & \leq \int_{\pth^c} e^{c u_n n^{1/2} \norm{\theta}_{H^s}} \Pi(d\theta|\alpha, \tau)\\
&\leq\Pi(\pth^c|\alpha, \tau)^\frac12\bigg(\int_{\Theta} e^{2c u_n n^{1/2} \norm{\theta}_{H^s}} \Pi(d\theta|\alpha, \tau)\bigg)^\frac12\\
&=\Pi(\pth^c|\alpha, \tau)^\frac12\bigg(\int_{\Theta} e^{2c u_n n^{1/2} \tau \norm{\theta}_{H^s}} \Pi(d\theta|\alpha, 1)\bigg)^\frac12.
\end{align*}
Recall $\Pi(\pth^c | \at)^{\frac12} \leq 2s_1e^{-\frac{\eta}2n\eps_n^2}$ from Lemma \ref{lem:C1}, where $s_1>0$ is defined in Lemma \ref{lem:fernique} and depends only on $\ubar{\alpha}$ and $s$ (also recall that by \cite[Lemma 5.2]{ADH21} the ${H^s}$-norm is almost surely finite with respect to $\Pi(\cdot\mid\at)$ for $s<\alpha$). Therefore, to complete the proof, it suffices to show that the parenthesis term is bounded by a constant depending only on $\ubar{\alpha}$ and $s$. By Lemma \ref{lem:fernique} it suffices to have that the exponent in the integrand is less than $s_2\norm{\theta}_{H^s}$ for a certain constant $s_2=s_2(\ubar{\alpha};s)>0$. This is indeed the case for the assumed choice of $u_n$.
\end{proof}


\begin{lem}\label{lem:E3}
Consider the white noise model. Let $\t_n\to\infty$ and $s,\ubar{\alpha}, \bar{\alpha}$ such that $\bar{\alpha}>\ubar{\alpha}>s>0$. Recall   $\pth(\at)$ from Lemma \ref{lem:C1}. Consider any of the settings:

\begin{enumerate}
\item[i)] $\lambda=\tau$ where $\alpha>0$ is fixed, $\bar{\Lambda}_n\coloneqq (0,\t_n)$ and $u_n\lesssim \t_n^{-2}n^{-3}$;\smallskip
\item[ii)] $\lambda=\alpha$ where $\tau>0$ is fixed, $\bar{\Lambda}_n\coloneqq[\ubar{\alpha},\bar{\alpha}]$ and $u_n\lesssim n^{-3}$;\smallskip
\item[iii)] $\lambda=(\at)$, $\bar{\Lambda}_n\coloneqq[\ubar{\alpha},\bar{\alpha}]\times(0,\t_n)$, and $u_n\lesssim \t_n^{-2}n^{-3}$.
\end{enumerate}

Then for $\theta\in\pth(\at)$, there exist tests $\varphi_n(\theta)$ such that
\begin{equation} \label{eq:lemE3} 
\begin{split} & E^{n}_{\theta_{0}} \varphi_{n}(\theta)  \leq e^{-\frac1{32} n \norm{\theta-\theta_{0}}^{2}_{2}}, \\ 
& \sup_{\substack{\lambda\in\bar{\Lambda}_n}} \sup_{\substack{\theta' \in \pth(\at)  \\ \norm{\theta-\theta'}_{2}<\norm{\theta-\theta_{0}}_{2}/4}} \int_{\mX} \big(1-\varphi_{n}(\theta) \big) dQ^{\theta'}_{\lambda, n} (X) \leq e^{-\frac1{32} n \norm{\theta-\theta_{0}}^{2}_{2}}.
\end{split} \end{equation}
\end{lem}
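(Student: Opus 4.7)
My plan is to use a standard linear likelihood-ratio-type test exploiting the Gaussianity of the white noise model, and to handle the supremum inside $Q^{\theta'}_{\lambda,n}$ via a Cauchy--Schwarz argument that transfers the problem back to $P^n_{\theta'}$ plus a Gaussian supremum bound.

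\textbf{Test and Type I.} Fix a constant $c \in (1/4, 1/2)$ -- concretely $c = 7/20$ -- and define
\[
\varphi_n(\theta) := \mathbf{1}\bigl\{\langle X^{(n)} - \theta_0, \theta - \theta_0\rangle_2 \geq c\,\|\theta - \theta_0\|_2^2\bigr\}.
\]
Under $P^n_{\theta_0}$ the statistic equals $\langle W/\sqrt{n}, \theta-\theta_0\rangle_2$, which is centered Gaussian with variance $\|\theta-\theta_0\|_2^2/n$, and the standard Gaussian tail yields $E^n_{\theta_0}\varphi_n(\theta) \leq e^{-c^2 n\|\theta-\theta_0\|_2^2/2}$; since $c^2/2 > 1/32$ this gives the first inequality in \eqref{eq:lemE3}.

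\textbf{Uniform control of the perturbation.} For $\theta' \in \pth(\at)$ and any $\lambda'$ with $\rho(\lambda,\lambda') \le u_n$, the coordinate-wise bound \eqref{eq:E1a} in the proof of Lemma \ref{lem:E1} (valid for any of the three cases of $\lambda$) combined with the embedding $H^s \hookrightarrow $ weighted-$\ell_2$ used there gives
\[
\|\psi_{\lambda,\lambda'}(\theta') - \theta'\|_2 \;\lesssim\; u_n \|\theta'\|_{H^s} \;\lesssim\; u_n Y_n.
\]
For the assumed $u_n \lesssim \bar{\tau}_n^{-2} n^{-3}$, recalling $Y_n = \eta \tau n \gep_n^2/s_2 \lesssim \bar{\tau}_n n \gep_n^2$, this bound is of the order $\bar{\tau}_n^{-1} n^{-2} \gep_n^2$, polynomially small in $n$.

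\textbf{Type II via Cauchy--Schwarz.} Factor $q^{\theta'}_{\lambda,n}(X) = R(X)\, p^n_{\theta'}(X)$, where
\[
R(X) := \sup_{\rho(\lambda,\lambda') \le u_n} \exp\!\Bigl( n\langle X-\theta',\, \psi_{\lambda,\lambda'}(\theta') - \theta'\rangle_2 - \tfrac{n}{2}\|\psi_{\lambda,\lambda'}(\theta') - \theta'\|_2^2 \Bigr),
\]
and apply Cauchy--Schwarz:
\[
\int(1-\varphi_n(\theta))\, dQ^{\theta'}_{\lambda,n} = \int(1-\varphi_n(\theta))\,R\, dP^n_{\theta'} \;\leq\; \bigl[P^n_{\theta'}(\varphi_n = 0)\bigr]^{1/2}\, \bigl[E_{P^n_{\theta'}} R^2\bigr]^{1/2}.
\]
Under $P^n_{\theta'}$, the test statistic has mean $\langle \theta'-\theta_0,\theta-\theta_0\rangle_2 \geq (3/4)\|\theta-\theta_0\|_2^2$ (by Cauchy--Schwarz and the assumption $\|\theta-\theta'\|_2 < \|\theta-\theta_0\|_2/4$), so the Gaussian tail gives $P^n_{\theta'}(\varphi_n = 0) \leq e^{-(3/4-c)^2 n\|\theta-\theta_0\|_2^2/2}$, which with $c = 7/20$ is at most $e^{-n\|\theta-\theta_0\|_2^2/16}$; after the square root this contributes $e^{-n\|\theta-\theta_0\|_2^2/32}$. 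For the second factor, under $P^n_{\theta'}$ we have $X - \theta' = W/\sqrt{n}$ and the random field $\lambda' \mapsto \sqrt{n}\langle W, \psi_{\lambda,\lambda'}(\theta') - \theta'\rangle_2$ is a centered Gaussian process whose variance is bounded uniformly in $\lambda'$ by $n\,(Cu_n Y_n)^2 = o(1)$. By Borell's inequality for the Gaussian supremum (applied over the compact $\rho$-ball of radius $u_n$, on which the process is Lipschitz with polynomially bounded constant), $E_{P^n_{\theta'}} R^2 = 1 + o(1)$ uniformly in $\lambda \in \bar{\Lambda}_n$, $\theta' \in \pth(\at)$, and $n$. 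The $(1+o(1))$-factor is absorbed by trivially enlarging the exponent constant (or, equivalently, by choosing $c$ slightly smaller), yielding the second inequality in \eqref{eq:lemE3}.

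\textbf{Main obstacle.} The delicate step is the uniform control of the Gaussian supremum in $E_{P^n_{\theta'}} R^2$: one must track the dependence on $\alpha$, $\tau$ in the Lipschitz constant of $\lambda' \mapsto \psi_{\lambda,\lambda'}(\theta')$ and combine this with the $H^s$-ball constraint baked into $\pth(\at)$ (through Lemma \ref{lem:C1}). The three cases $\lambda = \tau$, $\lambda = \alpha$, $\lambda = (\at)$ differ only in the form of \eqref{eq:E1a} (which combines the $\tau$- and $\alpha$-components via triangle in the additive loss \eqref{eq:rhodefn}), so the argument is structurally identical, with the different ranges of $u_n$ in the three items of the lemma precisely the ones that make $u_n Y_n = o(1)$ uniformly.
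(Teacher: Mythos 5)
Your proposal is correct and shares the paper's skeleton — the likelihood-ratio test with the standard type I/type II bounds, and the crucial uniform control $\norm{\psi_{\lambda,\lambda'}(\theta')-\theta'}_2\lesssim u_n\norm{\theta'}_{H^s}\lesssim u_nY_n$ extracted from the $H^s$-ball constraint built into $\pth(\at)$ (this is exactly the paper's display \eqref{eq:normdifbound}) — but it handles the supremum over $\lambda'$ inside $Q^{\theta'}_{\lambda,n}$ by a genuinely different mechanism. The paper keeps the cutoff $c=1/2$ and splits $\mX$ according to whether $Z=\sup_{\rho(\lambda,\lambda')\le u_n}|\ip{X}{\psi_{\lambda,\lambda'}(\theta')-\theta'}|$ exceeds $u_n\t_n^2n^2$: on the small-$Z$ event the correction to the $P^n_{\theta'}$-density is deterministically $e^{O(u_n\t_n^2n^3)}=O(1)$ and one reduces to the classical type II bound, while the large-$Z$ event contributes only $O(e^{-\t_n^2n^3/4})$, which is negligible because $\norm{\theta-\theta_0}_2=o(\t_n n)$ on the sieve. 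You instead factor $q^{\theta'}_{\lambda,n}=R\cdot p^n_{\theta'}$ and apply Cauchy--Schwarz, which squares the type II probability; you correctly compensate by lowering the cutoff to $c=7/20$ (both $c^2/2$ and $(3/4-c)^2/4$ then still exceed $1/32$), and you must control $E_{P^n_{\theta'}}R^2$ via exponential moments of a Gaussian supremum. That last step is the only place where your argument is less elementary than the paper's: Borell--TIS alone bounds deviations of $\sup_{\lambda'}G_{\lambda'}$ from its mean, so you additionally need $E\sup_{\lambda'}G_{\lambda'}=o(1)$, e.g.\ via Dudley's bound using the Lipschitz estimate $E(G_{\lambda'}-G_{\lambda''})^2\lesssim n Y_n^2\rho(\lambda',\lambda'')^2$ over the one- or two-dimensional $u_n$-ball; this is routine and you flag it as the main obstacle, so I regard the proposal as complete in outline. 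A cosmetic point common to both proofs: the resulting bound carries a multiplicative $1+o(1)$ (respectively $e^{O(1)}$ in the paper), which is absorbed into the exponent only when $n\norm{\theta-\theta_0}_2^2\to\infty$, as is the case wherever the lemma is invoked.
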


\begin{proof}
The proof proceeds broadly along the lines of the proof of \cite[Lemma E.3]{RS17}, which considers $\theta, \theta'\in \R^n$ in the non-parametric regression setting. In order to handle the infinite dimensionality of both the data and the parameter space in the presently assumed white noise model setting, we substantially exploit the $H^s$-regularity of elements of the sieve set $\pth$. Furthermore, we need to study the case $\lambda=(\at)$. We in fact only study that case, while the other two cases follow with obvious modifications.

The likelihood ratio test
\begin{equation} \label{eq:E3_1} 
\varphi_{n}(\theta) ={\bf 1 }\Big[  \ip{{X}}{\theta-\theta_0}> \norm{\theta}^2_2/2-\norm{\theta_0}^2_2/2\Big],
\end{equation}
satisfies for any $\theta, \theta_0\in\ell_2$
\begin{equation}\label{eq:E3_2}\sup_{\substack{\theta' \in \ell_2: \norm{\theta-\theta'}_{2}<\norm{\theta-\theta_{0}}_{2}/4}} E^{n}_{\theta'} \big(1-\varphi_{n}(\theta)\big)\leq e^{-\frac1{32} n \norm{\theta-\theta_{0}}^{2}_{2}}\end{equation}
 and 
 $E^{n}_{\theta_{0}} \varphi_{n}(\theta)  \leq e^{-\frac1{32} n \norm{\theta-\theta_{0}}^{2}_{2}},$
see \cite[Lemma 5]{GV07} and \cite[Lemma K.6]{GV17}. In particular, the first inequality in \eqref{eq:lemE3} is verified and it remains to prove the second.

Using the definition of $Q^{\theta}_{\lambda, n}$ and letting $Z=Z(X)\coloneqq\sup_{\rho(\lambda,\lambda')\leq u_n} \left|\ip{X}{\psi_{\lambda, \lambda'} (\theta')-\theta'}\right|$, we have
\begin{align}\label{eq:E3_2b}
\int_{\mX} \big(1-\varphi_{n}(\theta) \big) dQ^{\theta'}_{\lambda, n} (X)&=\int_{\mX} \big(1-\varphi_n(\theta)\big)\sup_{\rho(\lambda, \lambda') \leq u_n} \Big(e^{n\ip{{X}}{\psi_{\lambda, \lambda'} (\theta')}-\frac{n}2\norm{\psi_{\lambda, \lambda'} (\theta')}_{2}^2}\Big)dP^n_0({X})\nonumber\\
\leq&\int_{\{Z < u_n\t_n^2 n^2\}} \big(1-\varphi_n(\theta)\big)e^{n\ip{{X}}{\theta'}-\frac{n}2\norm{\theta'}_{2}^2}\nonumber\\
&\times\sup_{\rho(\lambda, \lambda') \leq u_n} \Big(e^{n\ip{{X}}{\psi_{\lambda, \lambda'} (\theta')-\theta'}-\frac{n}2\norm{\psi_{\lambda, \lambda'} (\theta')}_{2}^2+\frac{n}2\norm{\theta'}_{2}^2}\Big)dP^n_0({X})\nonumber\\
&+\int_{\{Z > u_n\t_n^2 n^2\}} \;\sup_{\rho(\lambda, \lambda') \leq u_n} e^{n\left|\ip{{X}}{\psi_{\lambda, \lambda'} (\theta')}\right|}dP^n_0({X}).
\end{align}
We deal with the two integral-terms on the right hand side separately.

We examine the first integral, and we notice that by \eqref{eq:E3_2} it suffices to show that the supremum is bounded above by a constant. Indeed, using triangle inequalities (both sided), we obtain
\begin{align*}
&\sup_{\rho(\lambda,\lambda')\leq u_n}\Big|2n\ip{{X}}{\psi_{\lambda, \lambda'} (\theta')-\theta'}-n\norm{\psi_{\lambda, \lambda'} (\theta')}_{2}^2+n\norm{\theta'}_{2}^2\Big|\\
&\leq 2nZ+n\sup_{\rho(\lambda,\lambda')\leq u_n}\left\{\Big|\norm{\psi_{\lambda, \lambda'} (\theta')}_2-\norm{\theta'}_2\Big|\Big(\norm{\psi_{\lambda, \lambda'} (\theta')}_2+\norm{\theta'}_2\Big)\right\}\\
&\leq 2nZ+n\sup_{\rho(\lambda,\lambda')\leq u_n}\left\{\norm{\psi_{\lambda, \lambda'} (\theta')-\theta'}_{2}\Big(\norm{\psi_{\lambda, \lambda'} (\theta')}_2+\norm{\theta'}_2\Big)\right\}\\
&\leq 2nZ+ n\sqrt{\sum_{j=1}^\infty \big( \overline{\psi}_{\lambda, j}^{\theta'}-\underline{\psi}_{\lambda, j}^{\theta'} \big)^2}\Big(\sup_{\substack{\rho(\lambda, \lambda')}\leq u_{n}} \norm{\psi_{\lambda, \lambda'}(\theta')}_{2}+\norm{\theta'}_{H^{s}}\Big),
\end{align*}
where $\underline{\psi}_{\lambda, j}^{\theta'}, \overline{\psi}_{\lambda, j}^{\theta'}$ are defined in the proof of Lemma \ref{lem:E1}. Recall from \eqref{eq:psi} that, in the studied setting, the transformation $\psi$ is given as $\psi_{\lambda, \lambda'} (\theta_j) = \frac{\tau'}{\tau} j^{\alpha-\alpha'} \theta_{j}$.
We bound the right hand side above, for $\lambda=(\at)$ and for $\theta'\in\pth(\at)$. First note that since $u_n\to0$, for sufficiently large $n$ we have
\begin{align}\label{eq:E3_3}
\sup_{\substack{\rho(\lambda, \lambda')}\leq u_{n}} \norm{\psi_{\lambda, \lambda'}(\theta')}_{2}&\leq e^{u_n}\sup_{|\alpha-\alpha'|\leq u_n}\bigg(\sum_{j=1}^\infty j^{2(\alpha-\alpha')}\theta_j'^2\bigg)^{1/2}
\leq 2\bigg(\sum_{j=1}^\infty j^{2s}\theta_j'^2\bigg)^{1/2}= 2\norm{\theta'}_{H^{s}},
\end{align}
where we used that $s>0$. Similarly to the derivation of \eqref{eq:E1a}, using the inequality $1-1/x\leq \log(x)$ for $x>0$, we have

\begin{align*}
\Big(  \underline{\psi}_{\lambda, j}^{\theta'} - \overline{\psi}_{\lambda, j}^{\theta'} \Big)^2 \nonumber
&\leq \Big( j^{\sign(\theta_j') u_n}  e^{\sign(\theta_j') u_n}\theta_j'-j^{-\sign(\theta_j') u_n}  e^{-\sign(\theta_j') u_n}\theta_j' \Big)^2 \nonumber\\
&\leq 2e^{-2\sign(\theta_j')u_n}\big(j^{-u_n} -j^{u_n} \big)^2{\theta_j'}^2+2j^{2\sign(\theta_j') u_n}  {\theta_j'}^2(e^{-u_n}-e^{ u_n})^2\nonumber\\
&\lesssim  \big(j^{u_n} -j^{-u_n} \big)^2{\theta_j'}^2+j^{2u_n}  {\theta_j'}^2(e^{u_n}-e^{ -u_n})^2\nonumber\\
&\lesssim u_n^2j^{2u_n}(\log j)^2{\theta_j'}^2+u_n^2e^{2u_n}j^{2u_n}{\theta_j'}^2\nonumber\\
&\lesssim u_n^2j^{2s}{\theta_j'}^2 ,
\end{align*}
hence
\begin{equation}\label{eq:normdifbound}
\sqrt{\sum_{j=1}^\infty \big(\overline{\psi}_{\lambda, j}^{\theta'}-\underline{\psi}_{\lambda, j}^{\theta'}\big)^2}\lesssim u_n\sqrt{\sum_{j=1}^\infty j^{2s}\theta'^2_j}=u_n\norm{\theta'}_{H^s}.
\end{equation}
Combining 
with the definition of $\pth(\at)$ and Remark \ref{rem:eta}, we obtain
\begin{equation}\label{eq:E3_4}\Big|2n\ip{{X}}{\psi_{\lambda, \lambda'} (\theta')-\theta'}-n\norm{\psi_{\lambda, \lambda'} (\theta')}_{2}^2+n\norm{\theta'}_{2}^2\Big|\lesssim u_n\t_n^2  n^3=O(1),\end{equation}
for the given choice of $u_n$.

Returning to the second integral in \eqref{eq:E3_2b}, we first notice that for $X\sim P^n_0$, $Z$ is equal in distribution to $\frac{|\xi|}{\sqrt{n}}\sup_{\rho(\lambda,\lambda')\leq u_n}\norm{\psi_{\lambda, \lambda'} (\theta')-\theta'}_2,$ where $\xi\sim N(0,1)$ and similarly \[\sup_{\rho(\lambda,\lambda')\leq u_n}\ip{X}{\psi_{\lambda,\lambda'}(\theta')}\stackrel{D}{=}\frac{|\xi|}{\sqrt{n}}\sup_{\rho(\lambda,\lambda')\leq u_n}\norm{\psi_{\lambda,\lambda'}(\theta')}_2.\] The integral of interest can thus be written as
\[\frac{1}{\sqrt{2\pi}}\int_{|x|>r} \exp\Big(\sqrt{n}|x|\sup_{\rho(\lambda,\lambda')\leq u_n}\norm{\psi_{\lambda,\lambda'}(\theta')}_2-\frac{x^2}2\Big)dx,\]
where $r=\frac{u_n\t_n^2n^{5/2}}{\sup_{\rho(\lambda,\lambda')\leq u_n} \norm{\psi_{\lambda,\lambda'}(\theta')-\theta'}_2}$. By \eqref{eq:normdifbound}, the definition of $\pth(\at)$ and Remark \ref{rem:eta}, we have that (for sufficiently large $n$) the denominator in $r$ is bounded above by $u_n\t_n n$, hence the last displayed integral can be bounded above by
\[\frac{1}{\sqrt{2\pi}}\int_{|x|>\t_n n^{3/2}} \exp\Big(\sqrt{n}|x|\sup_{\rho(\lambda,\lambda')\leq u_n}\norm{\psi_{\lambda,\lambda'}(\theta')}_2-\frac{x^2}2\Big)dx.\]
Using \eqref{eq:E3_3}, we note that for $|x|>\t_n n^{3/2}$ and for $\theta'\in\pth(\at)$, it holds 
\[\sqrt{n}|x|\sup_{\rho(\lambda,\lambda')\leq u_n}\norm{\psi_{\lambda,\lambda'}(\theta')}_2\leq2\sqrt{n}|x|\norm{\theta'}_{H^s}=o(1)x^2.\]
Combined, the above considerations yield that the second integral in \eqref{eq:E3_2b} is bounded by
\[\frac{1}{\sqrt{2\pi}}\int_{|x|>\t_n n^{3/2}} \exp\Big(-\frac{x^2}4\Big)dx\leq ce^{-\frac{\t_n^2n^3}4},\]
where the latter bound is implied by  \cite[Lemma K.6]{GV17} for a sufficiently large constant $c>0$. Observing that for $\theta\in\pth(\at)$ it holds
\[\norm{\theta-\theta_0}_2\leq\norm{\theta_0}_2+\norm{\theta}_2\lesssim 1+\norm{\theta}_{H^s}\lesssim 1+\t_n n\eps_n^2(\lambda)=o(\t_nn),\]
we have that the obtained bound is of smaller order than $e^{-\frac1{32}n\norm{\theta-\theta_0}^2_2}$ which completes the proof.
\end{proof}


The next lemma concerns condition (H2).

\begin{lem}\label{lem:E4}
Consider the white noise model. Let $\t_n\to\infty$ and $s,\ubar{\alpha},\bar{\alpha}$ such that $\bar{\alpha}>\ubar{\alpha}>s>0$. Recall  $\pth(\at)$ from Lemma \ref{lem:C1} and consider any of the three settings studied in Lemma \ref{lem:E3}. Fix $K>0$. Then for all $\lambda\in\bar{\Lambda}_n$, for any $c_3\geq2+2K^2$ it holds
\[\sup_{\substack{\theta\in\pth(\at)\\\theta:\norm{\theta-\theta_0}_2\leq K\eps_n(\lambda)}} P^n_{\theta_0}\Big\{\inf_{\rho(\lambda,\lambda')\leq u_n}\ell_n\big(\psi_{\lambda,\lambda'}(\theta)\big)-\ell_n\big(\theta_0\big)\leq -c_3 n\eps_n^2(\lambda)\Big\}\leq 2 e^{-n\eps_n^2(\lambda)}.\]
\end{lem}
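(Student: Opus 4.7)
The plan is to exploit the explicit Cameron--Martin form of the log-likelihood in the white noise model to reduce condition (H2) to tail bounds for centered Gaussian processes. Under $P^n_{\theta_0}$ one has $X=\theta_0+W/\sqrt{n}$, so for any $\theta'\in\Theta$,
\[\ell_n(\theta')-\ell_n(\theta_0) = -\tfrac{n}{2}\|\theta'-\theta_0\|_2^2 + \sqrt{n}\ip{W}{\theta'-\theta_0}_2,\]
with $\ip{W}{h}_2\sim N(0,\|h\|_2^2)$ for any $h\in\ell_2$. Substituting $\theta'=\psi_{\lambda,\lambda'}(\theta)$, the claim reduces to showing that
\[\sup_{\rho(\lambda,\lambda')\leq u_n}\Big[\tfrac{n}{2}\|\psi_{\lambda,\lambda'}(\theta)-\theta_0\|_2^2 - \sqrt{n}\ip{W}{\psi_{\lambda,\lambda'}(\theta)-\theta_0}_2\Big]\leq c_3\,n\eps_n^2(\lambda)\]
uniformly in $\theta\in\pth(\lambda)\cap\{\|\theta-\theta_0\|_2\leq K\eps_n(\lambda)\}$ and $\lambda\in\bar{\Lambda}_n$, on an event of $P^n_{\theta_0}$-probability at least $1-2e^{-n\eps_n^2(\lambda)}$.

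For the deterministic part, the $H^s$-intersection in the sieve $\pth(\lambda)=\tth(\lambda)\cap Y_n B_{H^s}$ is decisive: from \eqref{eq:normdifbound} and Remark \ref{rem:eta},
\[\sup_{\rho(\lambda,\lambda')\leq u_n}\|\psi_{\lambda,\lambda'}(\theta)-\theta\|_2 \lesssim u_n\|\theta\|_{H^s}\lesssim u_n\bar{\tau}_n n\eps_n^2(\lambda) = o(\eps_n(\lambda)),\]
in each of the three settings by the assumed range of $u_n$. The triangle inequality then yields $\|\psi_{\lambda,\lambda'}(\theta)-\theta_0\|_2\leq K\eps_n(\lambda)+o(\eps_n(\lambda))$ and hence $\tfrac{n}{2}\|\psi_{\lambda,\lambda'}(\theta)-\theta_0\|_2^2 \leq K^2n\eps_n^2(\lambda)$ for sufficiently large $n$, uniformly in $\lambda'$.

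For the stochastic part, I decompose
\[\sqrt{n}\ip{W}{\psi_{\lambda,\lambda'}(\theta)-\theta_0}_2 = \sqrt{n}\ip{W}{\theta-\theta_0}_2 + \sqrt{n}\ip{W}{\psi_{\lambda,\lambda'}(\theta)-\theta}_2.\]
The first summand is independent of $\lambda'$ and centered Gaussian with variance at most $nK^2\eps_n^2(\lambda)$; the elementary inequality $(K^2+2)^2\geq 2K^2$, valid for every $K>0$, combined with a standard Gaussian tail bound gives
\[P^n_{\theta_0}\Big(\sqrt{n}|\ip{W}{\theta-\theta_0}_2|>(K^2+2)\,n\eps_n^2(\lambda)\Big)\leq 2\exp\Big(-\tfrac{(K^2+2)^2}{2K^2}\,n\eps_n^2(\lambda)\Big)\leq 2e^{-n\eps_n^2(\lambda)}.\]
The second summand defines a centered Gaussian process in $\lambda'$ whose pointwise variance is $\|\psi_{\lambda,\lambda'}(\theta)-\theta\|_2^2=o(\eps_n^2(\lambda))$ by the above. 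Together with a Lipschitz-in-$\lambda'$ bound on $\lambda'\mapsto\psi_{\lambda,\lambda'}(\theta)$ inherited from \eqref{eq:normdifbound} and a covering of the $u_n$-$\rho$-ball in $\bar{\Lambda}_n$ by polynomially many points of much smaller $\rho$-radius, a union bound delivers $\sqrt{n}\sup_{\rho(\lambda,\lambda')\leq u_n}|\ip{W}{\psi_{\lambda,\lambda'}(\theta)-\theta}_2|=o(n\eps_n^2(\lambda))$ off an event of probability $o(e^{-n\eps_n^2(\lambda)})$. Summing the three bounds then yields the claimed estimate for any $c_3\geq 2+2K^2$ and $n$ large.

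The principal obstacle is the uniformity in $\lambda'$ of the second stochastic summand in the cases $\lambda\in\{\alpha,(\at)\}$, where $\psi_{\lambda,\lambda'}(\theta_j)=j^{\alpha-\alpha'}\theta_j$ depends nonlinearly on $\lambda'$ and the noise process is genuinely infinite-dimensional. The role of the $H^s$-regularity imposed on $\pth(\lambda)$ via the intersection with $Y_n B_{H^s}$ is precisely to control both the pointwise variance and the modulus of continuity of this Gaussian process simultaneously, in direct analogy with how this intersection is exploited in the proofs of Lemmas \ref{lem:E1} and \ref{lem:E3}.
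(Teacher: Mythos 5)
Your argument is correct and takes essentially the same route as the paper's proof: both isolate the perturbation $\psi_{\lambda,\lambda'}(\theta)-\theta$, control it uniformly in $\lambda'$ via the $H^s$-regularity of the sieve and \eqref{eq:normdifbound} so that its contribution to the log-likelihood is $o(n\eps_n^2(\lambda))$ on an event of probability $1-o(e^{-n\eps_n^2(\lambda)})$, and handle the remaining term $\ell_n(\theta)-\ell_n(\theta_0)$ by a sub-Gaussian deviation bound with variance at most $K^2 n\eps_n^2(\lambda)$, which is exactly where $c_3\ge 2+2K^2$ originates (the paper phrases this as an exponential Markov inequality around the Kullback--Leibler mean where you use the Gaussian tail directly; the two are equivalent here). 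The only point to tidy is the final accounting: your stated bounds sum to $(2K^2+2+o(1))n\eps_n^2(\lambda)$, but the unused factor $1/2$ in the deterministic term, $\tfrac n2\norm{\psi_{\lambda,\lambda'}(\theta)-\theta_0}_2^2\le(\tfrac{K^2}2+o(1))n\eps_n^2(\lambda)$, absorbs the $o(1)$ slack.
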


\begin{proof}
The proof follows the reasoning of the proof of \cite[Lemma E.4]{RS17}, which refers to nonparametric regression. As in Lemma \ref{lem:E3} above, we exploit the $H^s$-regularity induced by $\pth$, in order to handle the infinite dimensionality of both the data and the parameter space in the white noise model. As a result, we have the additional restriction $\theta\in\pth$ in the supremum, compared to \cite[Lemma E.4]{RS17}. Once more, we prove the case $\lambda=(\at)$ while the other two cases follow with obvious modifications.

We first note that it suffices to show that 
\begin{equation}\label{eq:E4_1}\sup_{\substack{\theta\in\pth(\at)\\\theta:\norm{\theta-\theta_0}_2\leq K\eps_n(\lambda)}} P^n_{\theta_0}\Big\{\sup_{\rho(\lambda,\lambda')\leq u_n}|\ell_n\big(\psi_{\lambda,\lambda'}(\theta)\big)-\ell_n\big(\theta\big)|\geq \frac{c_3}2 n\eps_n^2(\lambda)\Big\}\leq e^{-n\eps_n^2(\lambda)}\end{equation}
and 
\begin{equation}\label{eq:E4_2}\sup_{\substack{\theta\in\pth(\at)\\\theta:\norm{\theta-\theta_0}_2\leq K\eps_n(\lambda)}} P^n_{\theta_0}\Big\{\ell_n(\theta)-\ell_n(\theta_0)\leq -\frac{c_3}2 n\eps_n^2(\lambda)\Big\}\leq e^{-n\eps_n^2(\lambda)}.\end{equation}

As in the proof of Lemma \ref{lem:E3}, define $Z=Z(X)\coloneqq\sup_{\rho(\lambda,\lambda')\leq u_n} \left|\ip{X}{\psi_{\lambda, \lambda'} (\theta)-\theta}\right|$ and assume that $Z<u_n\t_n^2n^2$. Then by \eqref{eq:E3_4}, for $u_n$ as chosen in Lemma \ref{lem:E3}, it holds
\[\sup_{\rho(\lambda,\lambda')\leq u_n}|\ell_n\big(\psi_{\lambda,\lambda'}(\theta)\big)-\ell_n\big(\theta\big)|=O(1).\]
On the other hand, again similarly to the proof of Lemma \ref{lem:E3}, for sufficiently large $n$, it holds
\begin{align*}P^n_{\theta_0}\big(Z(X)>u_n\t_n^2n^2\big)&=2\bigg(1-\Phi\Big(\frac{u_n\t_n^2 n^{5/2}}{\sup_{\rho(\lambda,\lambda')\leq u_n} \norm{\psi_{\lambda, \lambda'} (\theta)-\theta}_2}\Big)\bigg)\\ &\leq 2\big(1-\Phi(\t_n n^{3/2})\big)\leq e^{-\frac{n^3\t_n^2}2}=o(e^{-n\eps^2_n(\lambda)}),\end{align*}
where $\Phi$ denotes the cumulative distribution function of the standard normal distribution, and we have used \cite[Lemma K.6]{GV17} to bound $1-\Phi$. Combining we get that \eqref{eq:E4_1} holds for any $c_3>0$.

For \eqref{eq:E4_2}, let $K(\theta_0, \theta)=\frac12n\norm{\theta-\theta_0}_2^2$ and note that this quantity coincides with the Kullback-Leibler divergence ${\rm KL}(P^n_{\theta_0};P^n_{\theta})=E^{n}_{\theta_0}[\ell_n(\theta_0)-\ell_n(\theta)]$, see for example \cite[Lemma 8.30]{GV17}. Using the (exponential) Markov inequality, we then have
\begin{align*}
&\sup_{\substack{\theta\in\pth(\at)\\\theta:\norm{\theta-\theta_0}_2\leq K\eps_n(\lambda)}} P^n_{\theta_0}\Big\{\ell_n(\theta)-\ell_n(\theta_0)\leq -\frac{c_3}2 n\eps_n^2(\lambda)\Big\}\\
&=\sup_{\substack{\theta\in\pth(\at)\\\theta:\norm{\theta-\theta_0}_2\leq K\eps_n(\lambda)}} P^n_{\theta_0}\Big\{\ell_n(\theta)-\ell_n(\theta_0)+K(\theta_0,\theta)\leq K(\theta_0,\theta)-\frac{c_3}2 n\eps_n^2(\lambda)\Big\}\\
&\leq\sup_{\substack{\theta\in\pth(\at)\\\theta:\norm{\theta-\theta_0}_2\leq K\eps_n(\lambda)}} P^n_{\theta_0}\Big\{\ell_n(\theta)-\ell_n(\theta_0)-E^n_{\theta_0}[\ell_n(\theta)-\ell_n(\theta_0)]\leq \frac{K^2-c_3}2 n\eps_n^2(\lambda)\Big\}\\
&\leq \sup_{\substack{\theta\in\pth(\at)\\\theta:\norm{\theta-\theta_0}_2\leq K\eps_n(\lambda)}} P^n_{\theta_0}\Bigg\{\exp\Big(\ell_n(\theta_0)-\ell_n(\theta)-E^n_{\theta_0}[\ell_n(\theta_0)-\ell_n(\theta)]\Big)\geq e^{\frac{c_3-K^2}2 n\eps_n^2(\lambda)}\Bigg\}\\
&\leq \sup_{\substack{\theta\in\pth(\at)\\\theta:\norm{\theta-\theta_0}_2\leq K\eps_n(\lambda)}} E^{n}_{\theta_0}\Bigg\{\exp\Big(\ell_n(\theta_0)-\ell_n(\theta)-E^n_{\theta_0}[\ell_n(\theta_0)-\ell_n(\theta)]\Big)\Bigg\}e^{\frac{K^2-c_3}2 n\eps_n^2(\lambda)}.
\end{align*}
Since for $W\sim P^{(1)}_0$, it holds that $\ip{W}{\theta_0-\theta}$ is a univariate normal random variable with mean zero and variance $\norm{\theta_0-\theta}_2^2$, we obtain that the last expectation is equal to 
\[e^{-\frac{n}2\norm{\theta_0}_2^2+\frac{n}2\norm{\theta}_2^2-\frac{n}2\norm{\theta-\theta_0}_2^2}E^n_{\theta_0}\Big[e^{n\ip{{\mathbf x}}{\theta_0-\theta}}\Big]=e^{\frac{n}2\norm{\theta-\theta_0}_2^2}.\] Choosing $c_3\geq 2+2K^2$, we thus verify \eqref{eq:E4_2} and the proof is complete. 
\end{proof}

The next three lemmas concern condition (H1) for $\lambda=\tau$, $\lambda=\alpha$ and $\lambda=(\at)$, respectively.

\begin{lem}\label{lem:3.5}
Consider an $\alpha$-regular $\tau$-scaled $p$-exponential prior, for fixed $\alpha > 0$ and $\lambda=\tau$ being the hyper-parameter. Let $\theta_0\in H^{\beta}$, $\beta\geq(1+\alpha p)/(p+2\alpha p)$. Consider a hyper-prior $\tilde{\pi}$ on $\tau$, satisfying Assumption \ref{ass:hyper_prior}(i) for some ${\conr_0}>0$. 
For $\enab=\enab(\alpha,\beta,p)$ as defined in Lemma \ref{lem:Lemma3.3}, any $\bar{c}_0>0$ and $\tilde{M}_n$ tending to infinity arbitrarily slowly, let
$$\Lambda_{n}:=\big[n^{-\frac{1}{2+p+2\alpha p}}, e^{2\conr_{0} \bar{c}_{0} n \tilde{M}_{n}^{2} \enab^{2}} \big].$$ 
Then Assumption (H1) from Subsection \ref{sec:conditions} is verified for these $\Lambda_n, \bar{c}_0$ and for $\tilde{w}_n=\tilde{M}_n \enab/\gep_{n,0}$.
\end{lem}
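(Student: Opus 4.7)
The plan is to verify the two parts of assumption (H1) in turn. Throughout I will use that $\enab \geq \gep_{n,0}$, which follows from $\enab$ being an upper bound for $\gep_n(\tau_0)$ at the optimal scaling $\tau_0 \in \Lambda_n$ from Lemma \ref{lem:Lemma3.3} combined with the definition \eqref{defn:en0} of $\gep_{n,0}$.

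First I would verify the tail bound \eqref{eq:H1b}. Since $\tilde{\pi}$ is supported on $[n^{-1/(2+p+2\alpha p)},\infty)$ by Assumption \ref{ass:hyper_prior}(i), the set $\Lambda_n^c$ reduces (up to a $\tilde{\pi}$-null set) to $(\exp(2\conr_0\bar{c}_0 n\tilde{M}_n^2 \enab^2),\infty)$, which lies in the region $\tau \geq 1$. Applying the polynomial upper bound $\tilde{\pi}(\tau)\lesssim \tau^{-\conr_2}$ and integrating gives
\[
\int_{\Lambda_n^c}\tilde{\pi}(\tau)\,d\tau \lesssim \exp\!\bigl(-2\conr_0\bar{c}_0(\conr_2-1)\,n\tilde{M}_n^2\enab^2\bigr).
\]
The assumption $\conr_2 > 1 + 1/\conr_0$ gives $2\conr_0(\conr_2-1) > 2$, and since $\tilde{M}_n \to \infty$ and $\enab \geq \gep_{n,0}$, the right-hand side is bounded by $e^{-\bar{c}_0 n\gep_{n,0}^2}$ for large $n$.

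Next I would verify \eqref{eq:H1a} by choosing $\tilde{\Lambda}_0 = [\tau_0, 2\tau_0]$, where $\tau_0$ is the optimal scaling from Lemma \ref{lem:Lemma3.3}. The key observation is that the upper bound $\tilde{\gep}_n(\alpha,\tau)$ in \eqref{gepn_t} is the sum of one term increasing in $\tau$ and one term decreasing in $\tau$, balanced at $\tau_0$, so multiplying $\tau$ by a bounded factor changes $\tilde{\gep}_n(\alpha,\tau)$ only by a bounded multiplicative constant (an additional $O(1)$-factor in the $\log$ in the boundary case is absorbed as well). Combined with $\gep_n \leq \tilde{\gep}_n$ and $\tilde{M}_n \to \infty$, this gives $\gep_n(\alpha,\tau) \leq \tilde{M}_n\enab$ for all $\tau \in \tilde{\Lambda}_0$ and $n$ large, so that $\tilde{\Lambda}_0 \subseteq \Lambda_0(\tilde{w}_n)$.

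To conclude I would lower bound $\int_{\tilde{\Lambda}_0}\tilde{\pi}(\tau)\,d\tau$ by splitting into the three regimes of $\tau_0$. When $\tau_0\leq 1$ (covering $\beta > \alpha+1/p$ as well as the sub-case $\alpha \leq \beta < \alpha+1/p$), I would apply $\tilde{\pi}(\tau)\gtrsim e^{-\conr_3\tau^{-p}}$ to obtain mass $\gtrsim \tau_0 e^{-\conr_3 \tau_0^{-p}}$; direct substitution of the formulas from Lemma \ref{lem:Lemma3.3} yields $\tau_0^{-p}\asymp n\enab^2$ in the former regime and $\tau_0^{-p} \leq n\enab^2$ in the latter. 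When $\tau_0 > 1$ (so $\beta < \alpha$), I would instead use the stretched-exponential lower bound $\tilde{\pi}(\tau)\gtrsim e^{-\conr_1 \tau^{2/(1+2\alpha)}}$ and verify that $\tau_0^{2/(1+2\alpha)}\leq n\enab^2$, which reduces to the elementary inequality $\beta \geq -1/2$. The critical case $\beta = \alpha+1/p$ follows the same template with extra polylog bookkeeping. In all cases the resulting lower bound on mass is $\gtrsim e^{-C n\enab^2}$, which dominates $e^{-n\tilde{M}_n^2\enab^2} = e^{-n\tilde{w}_n^2\gep_{n,0}^2}$ for large $n$ thanks to $\tilde{M}_n\to\infty$.

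The main obstacle is the case analysis for the lower bound on the hyper-prior mass; in particular the sub-case $\tau_0 > 1$, where the hyper-prior decays stretched-exponentially with exponent $2/(1+2\alpha)$, has to be checked to confirm that this decay is compatible with the required rate $e^{-n\tilde{w}_n^2\gep_{n,0}^2}$. Fortunately the exponent is calibrated precisely so that the trade-off closes, and the $\beta = \alpha+1/p$ boundary case, while notationally heavier, presents no new difficulty.
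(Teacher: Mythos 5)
Your proposal is correct and follows essentially the same route as the paper: the same choice $\tilde{\Lambda}_0=[\tau_0,2\tau_0]$ around the balancing scale from Lemma \ref{lem:Lemma3.3}, the same verification that $n\enab^2\gtrsim \tau_0^{-p}\vee\tau_0^{2/(1+2\alpha)}$ to invoke the hyper-prior lower bounds of \eqref{thm1_1}, and the same polynomial-tail integration for \eqref{eq:H1b}. The only differences are presentational (you treat the regimes $\tau_0\le1$ and $\tau_0>1$ explicitly where the paper writes a single $\wedge$-bound, and you omit the routine check that $2\tau_0\in\Lambda_n$), neither of which affects correctness.
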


\begin{proof}
The proof proceeds similarly to the proof of \cite[Lemma 3.5]{RS17}, with the difference that, due to the lack of good lower bounds for $\gep_n(\lambda)$, here we work with the upper bounds obtained in Lemma \ref{lem:Lemma3.3} rather than the oracle rate $\gep_{n,0}$ defined in \eqref{defn:en0}. 

We first explicitly identify $\tilde{\Lambda}_0 \subseteq \Lambda_0(\tilde{w}_n)=\{\tau\in\Lambda_n:\gep_n(\alpha,\tau)\leq \tilde{M}_n\enab\}$, which has enough mass under the hyper-prior $\tilde{\pi}(\tau)$, so that \eqref{eq:H1a} is satisfied. To this end, we consider the value of $\tau$, denoted by $\tau_0$, which balances the two terms in the upper bounds on $\gep_n(\at)$ found in part (i) of Lemma \ref{lem:Lemma3.2} and results in the optimized bounds derived in Lemma \ref{lem:Lemma3.3}. Clearly, for sufficiently large $n$ it holds $\tau_0 \in \Lambda_{0} (\tilde{w}_{n})$. We will show that $\tilde{\Lambda}_0$ can be chosen as the interval $\big[\tau_0, 2\tau_0 \big]$.

First, notice that $[\tau_0, 2\tau_0 ]\subseteq \Lambda_{0}(\tilde{w}_{n})$ for sufficiently large $n$. Indeed, considering the upper bounds in Lemma \ref{lem:Lemma3.2} and setting $\tau=c\tau_0$ for $c \in [1,2]$, it is straightforward to check that for any relationship between $\alpha, \beta$ and $p$, there exists $M>0$, such that $\gep_{n}(\at) \leq M\enab$ for all $c\in[1,2]$. [For the case $\beta=\alpha+1/p$, notice that since $\beta>1/p$ and since $\enab \gtrsim n^{-\beta/(1+2\beta)}$, we have $\log \big(n \tau_0^p \big) \to \infty$ which streamlines the calculation.] Thus, there exists $N>0$ sufficiently large, such that for all $n\geq N$ and any $\tau\in [{\tau_0}, 2\tau_0 ]$, it holds $\gep_{n}(\at) \leq \tilde{M}_n\enab$. Moreover, it is straightforward to check that for all relationships between $\alpha, \beta, p$, we have $[{\tau_0}, 2\tau_0 ]\subseteq \Lambda_n$. Combining, we get that $[{\tau_0}, 2\tau_0 ]\subseteq \Lambda_{0}(\tilde{w}_{n})$.

Next, the assumed lower bounds on $\tilde{\pi}(\tau)$ in \eqref{thm1_1} (taking into account that $p\in[1,2]$), imply that there exist $\conr'_1, \conr_3'>0$, such that
$$\int _{\tau_0}^{2\tau_0} \tilde{\pi}(\tau) d\tau \gtrsim e^{-{\conr_1}' \tau_0^{2/(1+2\alpha)}} \wedge e^{-{\conr_3}' \tau_0^{-p}}.$$ 
In addition, using the definitions of $\enab$ and $\tau_0$ in Lemma \ref{lem:Lemma3.3},  it is straightforward to check that $n \enab^{2} \gtrsim \tau_0^{2/(1+2\alpha)} \vee \tau_0^{-p}$. Combining, we obtain that for $\tilde{\Lambda}_0:= [{\tau_0},2\tau_0 ]$ there exists a constant $c'>0$ such that
\[\int_{\tilde{\Lambda}_0}\tilde{\pi}(\tau)d\tau\gtrsim e^{-c'n\enab^2},\] which, by the definition of $\tilde{w}_n$ and since $\tilde{M}_n\to\infty$, verifies \eqref{eq:H1a}.

Turning to condition \eqref{eq:H1b}, by the definition of $\Lambda_n$, the fact that $\tilde{\pi}(\tau)$ is supported on $\big[n^{-\frac{1}{2+p+2\alpha p}},\infty\big)$ and the upper bound on $\tilde{\pi}(\tau)$ in \eqref{thm1_1}, we can bound
\begin{align*}
\int _{\Lambda_n^c} \tilde{\pi}(\tau) d\tau 
 &\leq  \int _{e^{2{\conr_0}\bar{c}_0 n \tilde{M}_{n}^{2} \enab^2}}^{\infty} \tau^{-{\conr_2}} d\tau  \leq {\conr_0} e^{- 2\bar{c}_0 n \tilde{M}_{n}^{2} \enab^2}\leq e^{-\bar{c}_0n\tilde{M}_n^2\enab^2}.
\end{align*}
 By the definition of $\tilde{w}_n$, \eqref{eq:H1b} is verified and the proof is complete.
\end{proof}


\begin{lem}\label{lem:3.6}
Consider an $\alpha$-regular $p$-exponential prior (with $\tau=1$), and  let $\lambda=\alpha$ be the hyper-parameter. Consider a hyper-prior $\tilde{\pi}$ on $\alpha$, satisfying Assumption \ref{ass:hyper_prior}(ii) for some $\bar{\alpha}>\ubar{\alpha}>0$. Let $\theta_0 \in H^{\beta}$, $\beta\in[\ubar{\alpha},\bar{\alpha}]$. Define
$\Lambda_{n}$ to be the support of $\tilde{\pi}$, $\Lambda_n:=[\ubar{\alpha}, \bar{\alpha}]$ (constant for all $n\in\N$).
Recall $\minimax$ from \eqref{eq:minimax} and consider any $\tilde{M}_n \to \infty$ such that $\tilde{M}_n=o(1/\minimax)$. 
Then, there exists $\bar{c}_0>0$ such that Assumption (H1) from Subsection \ref{sec:conditions} holds for these $\Lambda_n$, $\bar{c}_0$ and for $\tilde{w}_n=\tilde{M}_n \minimax / \gep_{n,0}$.
\end{lem}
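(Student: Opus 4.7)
The plan is to verify the two components of condition (H1) separately. Since $\tilde{\pi}$ is supported on $\Lambda_n = [\ubar{\alpha}, \bar{\alpha}]$, the tail-mass requirement \eqref{eq:H1b} holds trivially for any $\bar{c}_0 > 0$, so the entire work lies in establishing \eqref{eq:H1a}.

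My candidate for $\tilde{\Lambda}_0$ is a shrinking neighborhood of the true smoothness $\beta$, namely $\tilde{\Lambda}_0 := [\beta - c_n, \beta + c_n] \cap [\ubar{\alpha}, \bar{\alpha}]$ for $c_n := c' \log \tilde{M}_n / \log n$ with a small constant $c' > 0$ to be fixed. To show $\tilde{\Lambda}_0 \subseteq \Lambda_0(\tilde{w}_n)$, I would use that for sufficiently large $n$, every $\alpha \in \tilde{\Lambda}_0$ satisfies $\alpha > \beta - 1/p$, so the first case of \eqref{gepn_t} in Lemma \ref{lem:Lemma3.2}(i) applies and gives $\gep_n(\alpha,1) \lesssim n^{-\alpha/(1+2\alpha)} + n^{\beta/[\beta(p-2) - \alpha p -1]}$. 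Writing $\delta = \alpha - \beta$ and expanding each exponent about $\alpha = \beta$ shows that the two ratios to $m_n^\ast = n^{-\beta/(1+2\beta)}$ are of the form $n^{O(|\delta|)}$; choosing $c'$ small enough then makes both factors $\le \tilde{M}_n$ uniformly on $\tilde{\Lambda}_0$, so that $\tilde{\gep}_n(\alpha,1) \le \tilde{M}_n m_n^\ast$ and hence $\tilde{\Lambda}_0 \subseteq \Lambda_0(\tilde{w}_n)$.

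The uniform lower bound $\tilde{\pi}(\alpha) \geq \cona$ on $[\ubar{\alpha}, \bar{\alpha}]$ from Assumption \ref{ass:hyper_prior}(ii) then gives $\int_{\tilde{\Lambda}_0} \tilde{\pi}(\alpha)\, d\alpha \gtrsim \cona\, c_n \asymp \log \tilde{M}_n / \log n$. On the other hand, $n \tilde{w}_n^2 \gep_{n,0}^2 = n \tilde{M}_n^2 (m_n^\ast)^2 = \tilde{M}_n^2\, n^{1/(1+2\beta)}$ grows at least polynomially in $n$, so $\exp(-n \tilde{w}_n^2 \gep_{n,0}^2)$ decays faster than any power of $n$ and is thus dominated by the logarithmic lower bound on the hyper-prior mass for all $n$ large. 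This establishes \eqref{eq:H1a} with any fixed $\bar{c}_0>0$, completing the verification.

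The main technical step is the exponent comparison in the middle paragraph: ensuring that the two piecewise-defined terms in \eqref{gepn_t} are both controlled uniformly over the shrinking window around $\beta$. This is routine but requires a separate sign-by-sign analysis of $\delta$, since for $\delta < 0$ the active constraint comes from the $n^{-\alpha/(1+2\alpha)}$ term while for $\delta > 0$ it comes from the $n^{\beta/[\beta(p-2)-\alpha p -1]}$ term. The boundary cases $\beta \in \{\ubar{\alpha},\bar{\alpha}\}$ are handled with one-sided neighborhoods, where the sign-restricted version of the required bound holds trivially.
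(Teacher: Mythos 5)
Your proposal is correct and follows essentially the same route as the paper's proof: establish \eqref{eq:H1b} trivially from the compact support of $\tilde{\pi}$, take $\tilde{\Lambda}_0$ to be a shrinking window around $\beta$, verify $\tilde{\Lambda}_0\subseteq\Lambda_0(\tilde{w}_n)$ via the case $\beta<\alpha+1/p$ of \eqref{gepn_t} with a side-by-side analysis of which term dominates for $\alpha\lessgtr\beta$, and lower-bound the hyper-prior mass of the window by $\cona$ times its width, which dominates $e^{-n\tilde{w}_n^2\gep_{n,0}^2}$ since $n(\minimax)^2$ grows polynomially. The only (immaterial) difference is your window width $c'\log\tilde{M}_n/\log n$ versus the paper's $1/\log n$; both choices make the exponent perturbation of order $O(1)$ or $O(\log\tilde{M}_n)$ in $\log n$-units and hence keep $\gep_n(\alpha,1)\le\tilde{M}_n\minimax$ on the window.
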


\begin{proof}
We fix any sequence $\tilde{M}_n \to \infty$ such that $\tilde{M}_n=o(1/\minimax)$ and explicitly identify $\tilde{\Lambda}_0 \subseteq \Lambda_0(\tilde{w}_n)$, which has enough mass under the prior $\tilde{\pi}(\alpha)$, so that \eqref{eq:H1a} is satisfied. By Lemma \ref{lem:Lemma3.3}, for sufficiently large $n$ it holds $\beta \in  \Lambda_0(\tilde{w}_n)=\{\alpha\in\Lambda_n:\gep_n(\alpha,1)\leq \tilde{M}_n\minimax\}$. We will show that $\tilde{\Lambda}_0$ can be chosen as the interval $[\beta-r_n, \beta+r_n]$ for appropriately decaying $r_n \to 0$. 

To this end, we first use the upper bounds in part (i) of Lemma \ref{lem:Lemma3.2} (with $\tau=1$) in order to identify how fast $r_n$ needs to decay, so that $[\beta-r_n, \beta+r_n]\subseteq\Lambda_0(\tilde{w}_n)$. 
It suffices to study the case $\beta < \alpha + 1/p$ and the related upper bound
\[\gep_n(\alpha,1)\lesssim\tilde{\gep}_n(\alpha,1)= n^{-\frac{\alpha}{1+2\alpha}}+n^{\frac{\beta}{\beta(p-2)-\alpha p-1}}.\] 
The first term dominates in the bound for $\alpha<\beta$ (decreasing with $\alpha$), the second term dominates for $\alpha>\beta$ (increasing with $\alpha$), while the two terms are balanced for $\alpha=\beta$. Replacing the extreme values $\alpha=\beta\pm r_n$ in the corresponding dominating term, it is straightforward to check that for sufficiently large $N>0$ and for all $n \geq N$, it holds $\gep_{n}(\alpha,1) \leq \tilde{M}_n \minimax$ for all $\alpha\in[\beta-r_n,\beta+r_n]$ with $r_n=1/\log n$. Since $[\beta-1/\log{n}, \beta+1/\log{n}] \subseteq \Lambda_{n}$ for sufficiently large $n$, we indeed get that $[\beta-r_n, \beta+r_n] \subseteq \Lambda_{0} (\tilde{w}_{n})$ (also for large $n$). 

Next, using the assumed  lower bound of $\tilde{\pi}(\alpha)$ in \eqref{thm1_2}, we obtain that
$$\int_{\beta-r_n}^{\beta+r_n}\tilde{\pi}(\alpha) d\alpha  \gtrsim r_n  \gtrsim \frac1{\log n} \gtrsim e^{-n \tilde{M}_n^{2} {(\minimax)}^2},$$ where for the last bound, we have used that $n({\minimax})^2\to\infty$ polynomially fast. By the definition of $\tilde{w}_n$, we have therefore verified \eqref{eq:H1a} for $\tilde{\Lambda}_0=[\beta-r_n, \beta+r_n],\; r_n=1/\log n$.

Due to the fact that $\tilde{\pi}$ is supported on $\Lambda_n$, the upper bound \eqref{eq:H1b} is trivially satisfied, thus the proof is complete.
\end{proof}


\begin{lem}\label{lem:3.7}
Consider an $\alpha$-regular $\tau$-scaled $p$-exponential prior and let $\lambda=(\alpha, \tau)$ be the hyper-parameter. Consider a hyper-prior on $\lambda$,  with density $\tilde{\pi}(\at)$, which satisfies Assumption \ref{ass:hyper_prior}(iii) for some $\bar{\alpha}>\ubar{\alpha}>0$ and some ${\conr_0}>0$. 

Let $\theta_0 \in B^{\beta}_{qq}$, for $1\leq p\leq q<2$, $\beta\in(\ubar{\alpha}+1/p, \bar{\alpha}+1/p)$. Recall $\minimax$ from \eqref{eq:minimax} and consider $\tilde{M}_n \to \infty$ such that $\tilde{M}_n=o(1/\minimax)$. For any $\bar{c}_0>0$,  define
$$\Lambda_{n} := \Big\{(\at): \alpha\in[\ubar{\alpha}, \bar{\alpha} ], \tau\in\big[n^{-\frac1{2+p+2\alpha p}}, e^{2{\conr_0} \bar{c}_0 n \tilde{M}_{n}^{2} {(\minimax)}^2 (\log n)^{2\omega'}} \big]\Big\}, $$
where $\omega'=(q-p)/[pq(1+2\beta)]$.

Then Assumption (H1) in Subsection \ref{sec:conditions} holds for these $\Lambda_n, \bar{c}_0, \tilde{M}_n$ with $\tilde{w}_n=\tilde{M}_n \minimax (\log n)^{\omega'}/\gep_{n,0}$. 
\end{lem}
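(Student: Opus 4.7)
The plan is to combine the techniques of Lemmas \ref{lem:3.5} and \ref{lem:3.6}, taking advantage of the product structure $\tilde\pi(\at)=\tilde\pi_\alpha(\alpha)\tilde\pi_\tau(\tau|\alpha)$ postulated in Assumption \ref{ass:hyper_prior}(iii). Set $\alpha^\ast:=\beta-1/p$, so that by the range assumption on $\beta$ we have $\alpha^\ast\in(\ubar\alpha,\bar\alpha)$, and denote by $\tau_0=\tau_0(\alpha^\ast)$ the optimizing scaling from Lemma \ref{lem:BBesov}. I would take
\[
\tilde\Lambda_0:=\bigl[\alpha^\ast-r_n,\alpha^\ast+r_n\bigr]\times\bigl[\tau_0,2\tau_0\bigr],\qquad r_n:=1/\log n,
\]
and for each $\alpha$ in the first interval check, using the explicit bounds in part (ii) of Lemma \ref{lem:Lemma3.2}, that $\gep_n(\at)\lesssim \minimax(\log n)^{\omega'}$ uniformly. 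Since the second term in \eqref{gepn_tBesov} dominates for $\alpha<\alpha^\ast$ and the first for $\alpha>\alpha^\ast$ (and they balance at $\alpha=\alpha^\ast$), a direct expansion in $r_n$ shows that perturbing $\alpha$ by $1/\log n$ only changes $\gep_n$ by a $\log$-power factor, so the whole rectangle $\tilde\Lambda_0$ lies in $\Lambda_0(\tilde w_n)$ for large $n$ (using $\tilde M_n\to\infty$).

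For \eqref{eq:H1a}, by Assumption \ref{ass:hyper_prior}(iii) I would bound
\[
\int_{\tilde\Lambda_0}\tilde\pi(\at)\,d\at
\;\ge\; \cona\int_{\alpha^\ast-r_n}^{\alpha^\ast+r_n}\!\int_{\tau_0}^{2\tau_0} \tilde\pi_\tau(\tau|\alpha)\,d\tau\,d\alpha
\;\gtrsim\; r_n\,\tau_0\,e^{-\conr_3\tau_0^{-p}},
\]
using the lower bound $\tilde\pi_\tau(\tau|\alpha)\gtrsim e^{-\conr_3\tau^{-p}}$ valid for $\tau\leq 1$ (with constants uniform in $\alpha$). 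Plugging in $\tau_0\asymp n^{-1/[p(1+2\beta)]}(\log n)^{\omega}$ gives $\tau_0^{-p}\asymp n^{1/(1+2\beta)}(\log n)^{-p\omega}\lesssim n(\minimax)^2(\log n)^{2\omega'}$, and combined with the polynomial factor $r_n\tau_0$ and $\tilde M_n\to\infty$ this lower bound dominates $e^{-n\tilde w_n^2\gep_{n,0}^2}=e^{-n\tilde M_n^2(\minimax)^2(\log n)^{2\omega'}}$, as required.

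For \eqref{eq:H1b}, note that $\tilde\pi_\alpha$ is supported on $[\ubar\alpha,\bar\alpha]$, so $\Lambda_n^c$ only contributes through the tail $\tau>\exp(2\conr_0\bar c_0 n\tilde M_n^2(\minimax)^2(\log n)^{2\omega'})$ (the lower-$\tau$ boundary of $\Lambda_n$ coincides with the support of $\tilde\pi_\tau(\cdot|\alpha)$). Using the polynomial upper bound $\tilde\pi_\tau(\tau|\alpha)\lesssim\tau^{-\conr_2}$ for $\tau\geq 1$ with $\conr_2>1+1/\conr_0$ uniformly in $\alpha$, I would integrate
\[
\int_{\Lambda_n^c}\tilde\pi(\at)\,d\at
\;\lesssim\;\int_{e^{2\conr_0\bar c_0 n\tilde M_n^2(\minimax)^2(\log n)^{2\omega'}}}^{\infty}\!\tau^{-\conr_2}\,d\tau
\;\lesssim\; e^{-2(\conr_2-1)\conr_0\bar c_0 n\tilde M_n^2(\minimax)^2(\log n)^{2\omega'}},
\]
and since $(\conr_2-1)\conr_0>1$ and $n\gep_{n,0}^2\lesssim n(\minimax)^2(\log n)^{2\omega'}$ by Lemma \ref{lem:BBesov}, the right hand side is bounded by $e^{-\bar c_0 n\gep_{n,0}^2}$ for $n$ large, giving \eqref{eq:H1b}.

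The main obstacle I anticipate is the bookkeeping in the first step: verifying uniformly over $\alpha\in[\alpha^\ast-r_n,\alpha^\ast+r_n]$ and $\tau\in[\tau_0,2\tau_0]$ that $\gep_n(\at)\le \tilde M_n\minimax(\log n)^{\omega'}$, since the three-case structure in \eqref{gepn_tBesov} forces one to check the regime $\beta\lessgtr\alpha+1/p$ on either side of $\alpha^\ast$ and, in the case $\beta=\alpha^\ast+1/p$, to handle the extra logarithmic factor carefully; as in the proof of Lemma \ref{lem:3.5} this last case is tractable because $\log(n\tau_0^p)\to\infty$ polynomially, but it requires the same manipulation via \cite[Lemma G.2]{ADH21}.
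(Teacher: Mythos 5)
Your proposal follows essentially the same route as the paper's proof: take $\tilde\Lambda_0$ to be a small rectangle around $(\beta-1/p,\tau_0)$ with $\alpha$-width $1/\log n$, verify inclusion in $\Lambda_0(\tilde w_n)$ via the bounds of Lemma \ref{lem:Lemma3.2}(ii), lower-bound the hyper-prior mass of the rectangle using the second line of \eqref{thm1_1} together with $\tau_0^{-p}\lesssim n(\minimax)^2$, and dispose of \eqref{eq:H1b} via the polynomial tail of $\tilde\pi_\tau(\cdot|\alpha)$. Two caveats. First, the paper takes the \emph{one-sided} interval $[\alpha_0-r_n,\alpha_0]$ precisely so that only the regime $\beta\ge\alpha+1/p$ of \eqref{gepn_tBesov} is ever needed; your two-sided choice forces you additionally through the case $\beta<\alpha+1/p$, whose $\alpha$-dependent exponent $-\frac{2\beta q+q-2}{4\beta q+4q-4-2\beta pq+2\alpha pq}$ must be expanded around $\alpha^\ast$ (it does only cost an $e^{O(r_n\log n)}=O(1)$ factor, but this is extra work the paper deliberately avoids and you should not leave implicit). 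Second, your domination claim is reversed: for $\tau\in[\tau_0,2\tau_0]$ and $\alpha<\alpha^\ast$ the \emph{first} term $n^{-\alpha/(1+2\alpha)}\tau_0^{1/(1+2\alpha)}$ dominates and \emph{increases} as $\alpha$ decreases (the second term $(n\tau_0^p)^{-1/2}$ is constant in $\alpha$), so the side $\alpha<\alpha^\ast$ is exactly where the nontrivial bookkeeping lives — the paper shows one needs $r_n\le(\log\tilde M_n-s\log\log n)/\log n$ with $s=\frac{\omega}{1+2\beta-2/p-2r_n}-\omega'$ and then checks that $r_n=1/\log n$ qualifies. The conclusion you assert (a perturbation of $\alpha$ by $1/\log n$ costs only a factor absorbed by $\tilde M_n$) is correct, but the stated justification points at the wrong term, so make sure the "direct expansion in $r_n$" is actually carried out on the first term.
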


\begin{proof}
We fix any sequence $\tilde{M}_n \to \infty$ such that $\tilde{M}_n=o(1/\minimax)$ and explicitly identify $\tilde{\Lambda}_0  \subseteq \Lambda_0(\tilde{w}_n)$, which has enough mass under the prior $\tilde{\pi}(\at)$, so that \eqref{eq:H1a} is satisfied. We consider $\alpha_0=\beta-1/p$ and $\tau_0=n^{-\frac{1}{p(1+2\beta)}}(\log n)^\omega$, with $\omega=\Big( p- \frac{2}{1+2\beta}\Big) \frac{q-p}{p^2 q}$, where $\omega\ge0$, since $\beta>1/p$ and $p\le q$.
By Lemma \ref{lem:BBesov}, these choices of $\alpha$ and $\tau$ lead to  optimized bounds on $\gep_n(\alpha,\tau)$ for $\theta_0\in B^{\beta}_{qq}$, while it is easy to check that $(\alpha_0,\tau_0)\in\Lambda_n$ using the assumption on $\beta$, thus for sufficiently large $n$ it holds that $(\alpha_0, \tau_0) \in  \Lambda_0(\tilde{w}_n)$. We will show that $\tilde{\Lambda}_0$ can be chosen as the rectangle $[\alpha_0 - r_n, \alpha_0] \times [{\tau_0}, 2\tau_0 ]$ for appropriately decaying $r_n \to 0$. 

We first identify how fast $r_n$ needs to decay, so that $[\alpha_0 - r_n, \alpha_0] \times[ {\tau_0}, 2\tau_0] \subseteq \Lambda_0(\tilde{w}_n)$. 
By part (ii) of Lemma \ref{lem:Lemma3.2}, for $p\leq q$ and $\alpha<\beta-1/p$ we have the bound
\begin{equation} \label{lem3.7_2}
\gep_{n}(\lambda) \lesssim\tilde{\gep}_n(\at) = \tau^{\frac{1}{1+2\alpha}} n^{-\frac{\alpha}{1+2\alpha}} + \tau^{-\frac{p}{2}} n^{-\frac12}.
\end{equation} 
Notice that since $\alpha<\beta-1/p$, for all $\tau\in[{\tau_0},2\tau_0]$ it holds that  $\tilde{\gep}_n(\at)\lesssim \tau_0^{\frac{1}{1+2\alpha}}n^{-\frac{\alpha}{1+2\alpha}}$, thus $\tilde{\gep}_n(\at)\lesssim n^{-\frac{1+\alpha p (1+2\beta)}{p(1+2\alpha)(1+2\beta)}} (\log n)^{\frac{\omega}{1+2\alpha}}$. The last upper bound, can be seen to be  decreasing in $\alpha$, therefore, in order to determine $r_n$ as above, it suffices to study $\tilde{\gep}_n$ for $\alpha=\alpha_0-r_n$. Since $\beta>1/p$ and $r_n\to0$, it is straightforward to check that for all sufficiently large $n$, it holds $\tilde{\gep}_{n}(\alpha_0-r_n,\tau_0) \leq \tilde{M}_n \minimax (\log n)^{\omega'}$ (where the right hand side is equal to $w_n\gep_{n,0}$ by the definition of $w_n$), provided $r_n \leq\frac{\log \tilde{M}_n- s \log{\log{n}}}{\log n}$, where $s=\frac{\omega}{1+2\beta -\frac2p-2r_n} - \omega'$. In particular, it can be seen that for the choice $r_n=1/\log{n}$, it holds that $s\log{\log{n}}=o(1)$ and hence this choice of $r_n$ satisfies the required upper bound for sufficiently large $n$.
In conclusion, and noticing that, by the assumption on $\beta$, it holds that $[\alpha_0 - r_n, \alpha_0] \times [ {\tau_0}, 2\tau_0 ] \subseteq \Lambda_n$ for sufficiently large $n$, we have that $\tilde{\Lambda}_\alpha\times \tilde{\Lambda}_\tau :=[\alpha_0 - 1/\log{n}, \alpha_0] \times [ {\tau_0}, 2\tau_0 ] \subseteq \Lambda_0(\tilde{w}_n)$ (also for sufficiently large $n$). 

We next verify \eqref{eq:H1a} for $\tilde{\Lambda}_\alpha\times \tilde{\Lambda}_\tau$, using the assumed lower bounds on the hyper-priors in \eqref{thm1_1} and  \eqref{thm1_2}. Similarly to Lemmas \ref{lem:3.6} and \ref{lem:3.5}, noting that $\tau_0\to0$ hence only the second bound in \eqref{thm1_1} is relevant, there exists ${\conr_3}'>0$ such that
\begin{align*}
\int_{\beta-\frac1p-\frac1{\log{n}}}^{\beta-\frac1p}&\Bigg( \int_{{\tau_0}}^{2\tau_0} \tilde{\pi}_{\tau}(\tau|\alpha) d\tau \Bigg) \tilde{\pi}_{\alpha}(\alpha) d\alpha \gtrsim \int_{\beta-\frac1p-\frac1{\log{n}}}^{\beta-\frac1p}\Bigg( \int_{{\tau_0}}^{2\tau_0} e^{-\conr_3 \tau^{-p}}d\tau \Bigg) \tilde{\pi}_{\alpha}(\alpha) d\alpha\\
&\gtrsim e^{-\conr_3' \tau_0^{-p}} \int_{\beta-\frac1p-\frac1{\log{n}}}^{\beta-\frac1p} \tilde{\pi}_{\alpha}(\alpha)d\alpha \gtrsim  \frac{1}{\log n}  e^{-{\conr_3}' \tau_0^{-p}} \gtrsim e^{-n \tilde{M}_n^{2} {(\minimax)}^2(\log n)^{2\omega'}},
\end{align*} 
where for the last bound we have used that $n{(\minimax)}^2\to\infty$ polynomially fast and $\tau_0^{-p} \lesssim n {(\minimax)}^2$. By the definition of $\tilde{w}_n$, we have therefore verified \eqref{eq:H1a}.

Finally, we verify \eqref{eq:H1b} using that $\tilde{\pi}_{\alpha}$ is supported on $[\ubar{\alpha},\bar{\alpha}]$,  that $\tilde{\pi}_\tau(\cdot|\alpha)$ is supported on $\big[n^{-\frac1{2+p+2\alpha p}},\infty\big)$ and the upper bound on $\tilde{\pi}_{\tau}(\cdot|\alpha)$ from \eqref{thm1_1}. Indeed, using similar calculations as in Lemma \ref{lem:3.5},   we can bound 
\begin{align*}
\int_{\Lambda^c_n} \tilde{\pi}(\alpha, \tau) d\alpha d\tau &= 
\int_{\ubar{\alpha}}^{\bar{\alpha}}\Bigg(\int_{e^{2{\conr_0} \bar{c}_0 n \tilde{M}_n^2 {(\minimax)}^2 (\log n)^{2\omega'}}}^\infty \tilde{\pi}_\tau(\tau|\alpha)d\tau\Bigg)\tilde{\pi}_\alpha(\alpha)d\alpha\\
& \lesssim {\conr_0} e^{-2\bar{c}_0 n \tilde{M}_n^{2} {(\minimax)}^2 (\log n)^{2\omega'}} \\
& \lesssim e^{-\bar{c}_0n \tilde{M}_n^{2} {(\minimax)}^2 (\log n)^{2\omega'}}.
\end{align*}
By the definition of $\tilde{w}_n$, the last bound verifies \eqref{eq:H1b}  and the proof is complete.
\end{proof}

\begin{rem}
Notice that the assumed lower bound for $\tau\geq1$ in \eqref{thm1_1}, $\tilde{\pi}_\tau(\tau)\gtrsim e^{-{\conr_1}\tau^\frac{2}{1+2\alpha}}$, is not used in the last proof. 
\end{rem}


\section{The constant in the centered small ball probabilities}
\begin{lem}\label{lem:centeredconstants}
Fix $\ubar{\alpha}>0$ and consider all $\alpha$-regular $p$-exponential measures $\Pi_\alpha\coloneqq\Pi(\cdot\mid\alpha, \tau=1)$, with $\alpha\ge\ubar{\alpha}$. Recall the concentration function $\varphi_\theta(\cdot)$ from \eqref{defn:concfcn} and consider
\[\varphi_0(\gep;\alpha)\coloneqq -\log \Pi_\alpha(\gep B_{\ell_2}).\]
There exists a constant $\tilde{c}>1$, such that 
\begin{equation}\label{1stclaim}\tilde{c}^{-1}\alpha\leq\lim_{\gep\to0}\gep^{1/\alpha}\varphi_0(\gep;\alpha)\leq \tilde{c}\alpha,\quad \forall \alpha\geq\ubar{\alpha}.\end{equation}
In particular, for any fixed $\bar{\alpha}>\ubar{\alpha}$ and any $\gep_0>0$, there exist constants $\tilde{c}_1>0$, such that 
\begin{equation}\label{2ndclaim}\tilde{c}_1^{-1}\gep^{-1/\alpha}\leq \varphi_0(\gep;\alpha)\leq \tilde{c}_1\gep^{-1/\alpha},\quad\forall \gep\in(0,\gep_0], \quad \forall \alpha\in[\ubar{\alpha},\bar{\alpha}].\end{equation}
\end{lem}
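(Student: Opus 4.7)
The plan is to derive both parts from the centered small-ball asymptotics for weighted $\ell_2$-norms of i.i.d.\ log-concave random variables established in \cite[Theorem 4.2]{FA07} and related papers of the same author. Under $\Pi_\alpha$, the squared norm decomposes as
\[
\|\theta\|_2^2 \;=\; \sum_{\ell=1}^\infty \ell^{-(1+2\alpha)}\xi_\ell^2,
\]
with $\xi_\ell$ i.i.d.\ having density proportional to $e^{-|x|^p/p}$, so $\varphi_0(\gep;\alpha)$ is the log of a small-ball probability for weighted sums with the regularly varying weights $\ell^{-(1+2\alpha)}$.

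For \eqref{1stclaim}, I would apply \cite[Theorem 4.2]{FA07} to obtain
\[
\lim_{\gep\to 0}\gep^{1/\alpha}\varphi_0(\gep;\alpha) \;=\; c_p(\alpha),
\]
where $c_p(\alpha)$ arises by combining de Bruijn's Tauberian theorem with the Laplace-transform asymptotic $-\log L_\alpha(\lambda)\sim A_p(\alpha)\lambda^{1/(1+2\alpha)}$, $L_\alpha(\lambda):=E\exp(-\lambda\|\theta\|_2^2)$, obtained via a Riemann-sum approximation of $\sum_\ell g_p(\lambda \ell^{-(1+2\alpha)})$ with $g_p(s):=-\log E e^{-s\xi^2}$. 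Since $g_p(s)\sim \tfrac12\log s$ as $s\to\infty$ and $g_p(s)\sim s\,E\xi^2$ as $s\to 0^+$, splitting the sum at $\ell\asymp \lambda^{1/(1+2\alpha)}$ yields $A_p(\alpha)\asymp\tfrac{1+2\alpha}{2}+\tfrac{E\xi^2}{2\alpha}$. De Bruijn then gives $c_p(\alpha)=(1-\beta)\beta^{\beta/(1-\beta)}A_p(\alpha)^{1/(1-\beta)}$ with $\beta=1/(1+2\alpha)$; a direct computation shows $c_p(\alpha)\asymp \alpha$ uniformly for $\alpha\geq \ubar{\alpha}$ (the factor $A_p(\alpha)^{1/(1-\beta)}$ drives the $\alpha$-growth while the remaining factors are continuous and tend to finite positive limits as $\alpha\to\infty$).

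For \eqref{2ndclaim}, the pointwise asymptotic must be upgraded to a uniform non-asymptotic two-sided bound on $(0,\gep_0]\times[\ubar{\alpha},\bar{\alpha}]$. The strategy is to establish the uniform Laplace-transform bound
\[
\tilde{D}_1(1+2\alpha)\lambda^{1/(1+2\alpha)}\;\leq\; -\log L_\alpha(\lambda)\;\leq\;\tilde{D}_2(1+2\alpha)\lambda^{1/(1+2\alpha)}
\]
for all $\lambda\ge 1$ and $\alpha\in[\ubar{\alpha},\bar{\alpha}]$, using uniform pointwise bounds on $g_p$ together with the compactness of the $\alpha$-interval in the Riemann-sum control. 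Then convert to small-ball bounds via Chernoff, $\Pi_\alpha(\gep B_{\ell_2})\le e^{\lambda\gep^2}L_\alpha(\lambda)$ optimized in $\lambda$ (lower bound on $\varphi_0$), and a matching direct argument using independence and the explicit marginal densities (upper bound on $\varphi_0$). Joint continuity and strict positivity of $(\gep,\alpha)\mapsto \varphi_0(\gep;\alpha)$ on the compact subset $[\gep_1,\gep_0]\times[\ubar{\alpha},\bar{\alpha}]$ handle the regime bounded away from $\gep=0$. The main obstacle is the uniformity in $\alpha$: \cite[Theorem 4.2]{FA07} is phrased for a fixed weight sequence, so one must re-read the proof to verify that the constants in the Laplace-transform, Riemann-sum and Tauberian estimates depend only on $\ubar{\alpha},\bar{\alpha}$ and $p$; since $\alpha$ enters only through $1+2\alpha$, this reduces to a careful tracking of explicit constants through the classical argument.
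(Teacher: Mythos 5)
Your plan follows the paper's proof essentially step for step: both parts rest on \cite{FA07} --- Theorem 4.2 there for the limit constant, whose $\asymp\alpha$ behaviour is extracted (as you propose) from two-sided bounds on the constant in the Laplace-transform asymptotic, and uniform two-sided bounds on $-\log \E e^{-\lambda\norm{\theta}_2^2}$ over $\alpha\in[\ubar{\alpha},\bar{\alpha}]$ converted to small-ball bounds via exponential Chebyshev, with the region $[\gep_1,\gep_0]\times[\ubar{\alpha},\bar{\alpha}]$ handled by joint continuity and positivity. The only cosmetic differences are that the paper controls the Laplace transform via the exact integral comparison of \cite[Theorem 2.2]{FA07} rather than a Riemann-sum split, and obtains the small-ball lower bound from the reverse inequality $\E e^{-\lambda V}\le e^{-\lambda\gep}+(1-e^{-\lambda\gep})\P(V\le\gep)$ rather than a direct independence argument.
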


\begin{proof}
By \cite[Theorem 4.2]{FA07} (where the small ball probabilities in $\ell_2$ for $\alpha$-regular $p$-exponential priors correspond to  powers $p=2$ and $\mu=1/2+\alpha$ in that source's notation), we have that 
\[\lim_{\gep\to0}\gep^{1/\alpha}\varphi_0(\gep;\alpha)=C(\alpha),\]
where 
\begin{equation}\label{eq:ca}C(\alpha)\coloneq\alpha\bigg(\frac{K(\alpha)^{\alpha+\frac12}}{\sqrt{2}(\alpha+\frac12)^{\alpha+\frac12}}\bigg)^{1/\alpha},\end{equation}
and \[K(\alpha)\coloneqq -\int_0^\infty \log\E \big[e^{-z^{-1-2\alpha}\xi^2}\big]dz.\]
Here $\xi$ is a real valued $p$-exponentially distributed random variable, with probability density function given by $f_p(x)=\frac1{c_p}\exp(-{|x|^p}/p),$ where $c_p=2\Gamma(1/p)p^{1/p-1}$. 

Using the bound 
\begin{equation}\label{eq:ccc}\exp(-|x|^p/p)\ge c\exp(-x^2/p), \quad \forall x\in\R, \end{equation} where $c$ is a sufficiently small positive constant, we can show that there exists another constant $c>0$, such that 
\begin{equation}\label{lem:lb}\E\big[e^{-z^{-1-2\alpha}\xi^2}\big]=\frac{1}{c_p}\int_{-\infty}^\infty e^{-z^{-1-2\alpha}x^2-\frac{|x|^p}p}dx\ge c \Big(\frac1p+z^{-1-2\alpha}\Big)^{-1/2}.\end{equation} 
On the other hand, we have
\begin{equation}\label{lem:ub}\E\big[e^{-z^{-1-2\alpha}\xi^2}\big] \leq \min\bigg\{\int_{-\infty}^\infty \frac{e^{-z^{-1-2\alpha}x^2}}{c_p}dx, \int_{-\infty}^\infty \frac{e^{-\frac{|x|^p}p}}{c_p}dx\bigg\} = \min\Big\{\frac{\sqrt{\pi}}{c_p}z^{\frac12+\alpha}, 1\Big\} \leq \min \big\{z^{\frac12+\alpha}, 1\big\}.\end{equation}
Note that 
\[-\int_1^\infty \log \E \big[e^{-z^{-1-2\alpha}\xi^2}\big]dz\leq K(\ubar{\alpha})<\infty, \quad \forall \alpha\ge \ubar{\alpha}.\]

Combined with \eqref{lem:lb}, this gives
\[K(\alpha)\leq -\int_0^1\log \E\big[e^{-z^{-1-2\alpha}\xi^2}\big]dz+K(\ubar{\alpha})\leq -\int_0^1\log\bigg(c\Big(\frac1p+z^{-1-2\alpha}\Big)^{-1/2}\bigg)dz+K(\ubar{\alpha}),\] and it is straightforward to check that there exists another constant $c>1/2$ depending on $p$ and $\ubar{\alpha}$ but not on $\alpha$, such that 
\begin{equation}\label{eq:Kbound}K(\alpha)\leq c+\alpha.\end{equation} 
Here, it is possible to take $c>1/2$, due to the fact that in \eqref{eq:ccc} the constant $c$ can be taken as small as we wish.
Moreover, using the upper bound \eqref{lem:ub}, it is straightforward to show that 
\[K(\alpha)\geq-\int_0^\infty\log\Big(\min\big\{z^{1/2+\alpha},1\big\}\Big)dz\ge -\int_0^1\log\big(z^{1/2+\alpha}\big)dz=1/2+\alpha.\]

By the above bounds on $K(\alpha)$, we thus have 
\[2^{-1/2\alpha} \leq \frac{C(\alpha)}{\alpha} \leq \bigg(\frac{(\alpha+c)^{\alpha+\frac12}}{\sqrt{2}(\alpha+\frac12)^{\alpha+\frac12}}\bigg)^{1/\alpha},\]
and the proof of \eqref{1stclaim} is complete since both the upper and lower bounds of $C(\alpha)/\alpha$ are continuous over the range of $\alpha$ and tend to 1 as $\alpha\to\infty$.

In order to prove \eqref{2ndclaim}, we delve into the theory developed in \cite{FA07} (see also the PhD thesis \cite{FA06thesis}), which lead to \cite[Theorem 4.2]{FA07} which we quoted in the proof of the first claim, and we extract the desired nonasymptotic bound with constants which are uniform for $\alpha\in[\ubar{\alpha},\bar{\alpha}]$. 

Let $\xi, \xi_1, \xi_2, \dots$ be independent and identically distributed real valued $p$-exponential random variables, and denote $\mu=1/2+\alpha$. Recall that we are interested in studying the probability 
\[\Pi_\alpha(\gep B_{\ell_2}) \equiv \rp\Big(\sum_{\ell=1}^\infty \ell^{-1-2\alpha} \xi_\ell^2\leq \gep^2\Big) \equiv \rp \Big(\sum_{\ell=1}^\infty \ell^{-2\mu} \xi_\ell^2\leq \gep^2 \Big).\] 
By \cite[Theorem 2.2]{FA07}, applied with $S(x)=x^{-\mu}$ so that $S^{-1}(x)=x^{-1/\mu}$, we have for all $\lambda>0$
\begin{equation}\label{eq:thm22a}
\log \E e^{-\lambda\xi^2}+J_{\mu}\leq \log \E e^{-\lambda\sum_{\ell=1}^\infty \ell^{-2\mu}\xi_\ell^2} \leq  J_{\mu},
\end{equation}
where \[J_\mu\coloneqq -\int_{0}^{\sqrt{\lambda}} \log \E e^{-y^2\xi^2} \frac{d}{dy}\big[S^{-1}\big(y\lambda^{-\frac12}\big)\big]dy=\lambda^{\frac1{2\mu}}\int_0^{\sqrt{\lambda}}\log \E e^{-y^2\xi^2}\frac1\mu y^{-\frac1\mu-1}dy.\]
A change of variables then shows that $J_{\mu}=\lambda^{\frac1{2\mu}}I_{\mu},$
where 
\[I_\mu=I_\mu(\lambda)\coloneqq \int_{\lambda^{-\frac1{2\mu}}}^\infty \log \E e^{-z^{-2\mu}\xi^2}dz.\]
We bound $I_\mu$ from above and below, uniformly for all $\mu\in[\ubar{\mu},\bar{\mu}]$, where $\ubar{\mu}\coloneqq 1/2+\ubar{\alpha}, \bar{\mu}\coloneqq1/2+\bar{\alpha}.$
Noting that the integrand is negative, by \eqref{eq:Kbound} we have that  
\[I_\mu\ge -K(\alpha)\ge -c-\alpha, \quad\forall \lambda>0,\] where $c$ depends on $\ubar{\alpha}$ and $p$, but not on $\alpha$. As a result there is a finite constant $R_1=R_1(p,\ubar{\alpha},\bar{\alpha})>0$ independent of $\alpha$, such that \[I_\mu\ge  -R_1, \quad\forall \lambda>0,\quad\forall \alpha\in[\ubar{\alpha},\bar{\alpha}].\]
Furthermore, noting that for any $\lambda\ge1$
\[I_\mu\leq \int_{1}^\infty \log \E e^{-z^{-2\mu}\xi^2}dz,\] and  bounding $z^{-2\mu}$ from below by $z^{-2\bar{\mu}}$, it is straightforward to show that there exists a finite constant $R_2=R_2(p,\ubar{\alpha},\bar{\alpha})>0$ independent of $\alpha$, such that 
\[I_\mu\le -R_2, \quad\forall \lambda\ge 1, \quad\forall \alpha\in[\ubar{\alpha},\bar{\alpha}].\]

We next turn our attention to the term  $\log \E e^{-\lambda\xi^2}$ appearing in \eqref{eq:thm22a}. Observe that for all $p\in[1,2]$, $\alpha$-regular $p$-exponential priors satisfy \cite[Condition (O)]{FA07} for any $r>0$, that is there exists $C_1=C_1(r)>0$ such that \[\rp(|\xi|\leq r)\ge e^{-C_1t^{-1/r}}, \quad \forall 0<t\leq 1.\] Indeed, by \cite[Equation (2.21)]{FA06thesis}, it suffices to verify the existence of $\delta, C>0$ such that 
\begin{equation}\label{eq:221}\rp(|\xi|\leq t)\geq Ct^\delta, \quad\forall 0<t\leq 1.\end{equation}
Since for any $t\in(0,1]$, for all $p\in[1,2]$ we have
\[\rp(|\xi|\leq t)=\frac2{c_p}\int_0^te^{-\frac{x^p}p}dx\ge \frac2{c_p}\int_0^t e^{-x}dx=\frac2{c_p}(1-e^{-t}),\]
it is straightforward to check the validity of \eqref{eq:221} (for example using L'H\^opital's rule). By \cite[Equation (4.5)]{FA07} and Condition (O) with a fixed $r>\bar{\mu}$, we have
\[\log \E e^{-\lambda\xi^2}\geq -1+\log\rp(|\xi|\leq \lambda^{-1/2})\geq -1-C_1(r)\lambda^\frac{1}{2r}=\lambda^{\frac1{2\mu}} \big(-\lambda^{-\frac1{2\mu}}-C_1(r)\lambda^{\frac1{2r}-\frac1{2\mu}}\big), \quad \forall \lambda>0.\] 

Combining the above considerations, we get that there exist finite constants, also denoted by $R_1, R_2$ and which are  independent of $\alpha$, such that for $\mu=1/2+\alpha$
\begin{equation}\label{eq:Laplacebd} -\lambda^{\frac1{2\mu}} R_1\leq \log \E e^{-\lambda \sum_{\ell=1}^\infty \ell^{-2\mu}\xi_\ell^2}\leq -\lambda^{\frac1{2\mu}} R_2, \quad\forall \lambda\ge 1, \quad \forall \alpha\in[\ubar{\alpha},\bar{\alpha}].\end{equation}

To complete the proof, we exploit the relationship between the Laplace transform $ \log \E e^{-\lambda V}$ and the small ball probabilities $\rp(V\leq \gep)$, as presented in the proof of \cite[Lemma 1.5]{FA06thesis}. In particular, by \cite[Equations (1.3) and (1.4)]{FA06thesis}, for all $\gep, \lambda>0$ we have
\begin{equation}\label{eq:13thesis}
\log\rp(V\le \gep)\leq \lambda\gep+\log \E e^{-\lambda V}
\end{equation}
and 
\begin{equation}\label{eq:14thesis}
\E e^{-\lambda V}\leq e^{-\lambda\gep}+(1-e^{-\lambda\gep})\rp(V\leq \gep).
\end{equation}
We will  use these bounds with
$V\coloneqq\sum_{\ell=1}^\infty \ell^{-1-2\alpha}\xi_\ell^2$.  

Let $\gamma\coloneqq 1 / 2\mu=1 / (1+2\alpha)$. On the one hand, by \eqref{eq:Laplacebd}, we have $\log \E e^{-\lambda V}\geq -R_1\lambda^\gamma$ for all $\lambda\ge1$, hence using \eqref{eq:14thesis} with $\lambda=K_1\gep^{-\frac1{1-\gamma}}$ for $K_1\ge1$ to be chosen below, we get
\[\rp(V\le \gep)\ge \frac{e^{-R_1K_1^\gamma \gep^{-\frac{\gamma}{1-\gamma}}}-e^{-K_1\gep^{-\frac{\gamma}{1-\gamma}}}}{1-e^{-K_1\gep^{-\frac\gamma{1-\gamma}}}}= \frac{e^{-R_1K_1^\gamma \gep^{-\frac{\gamma}{1-\gamma}}}\Big(1- e^{(R_1K_1^\gamma-K_1)\gep^{-\frac\gamma{1-\gamma}}}\Big)}{1-e^{-K_1\gep^{-\frac\gamma{1-\gamma}}}}.\]
Choosing $K_1>1\vee R_1^\frac{1+2\ubar{\alpha}}{2\ubar{\alpha}}$, we have that $R_1K_1^\gamma-K_1<0$  for all $\alpha\in[\ubar{\alpha},\bar{\alpha}]$, so that  for $\gep\in(0,1]$ the parenthesis term in the right hand side above is lower bounded by a constant $0<C''<1$. We hence obtain 
 \[\rp(V\le \gep)\geq C''e^{-K_1\gep^{-\frac\gamma{1-\gamma}}}=C''e^{-K_1\gep^{-\frac1{2\alpha}}}\geq e^{-K_1'\gep^{-\frac1{2\alpha}}}, \quad \forall \gep\in(0,1], \quad \forall\alpha\in[\ubar{\alpha},\bar{\alpha}],
\] where $K_1'>0$ is independent of $\alpha$
and replacing $\gep$ by $\gep^2$, we have 
\begin{equation}\label{eq:llbd}\log\rp\big(V\le \gep^2\big)\geq -K_1'\gep^{-\frac1\alpha}, \quad \forall\gep\in(0,1],\quad \forall \alpha\in[\ubar{\alpha},\bar{\alpha}].\end{equation}

On the other hand, by \eqref{eq:Laplacebd}, we have $\log \E e^{-\lambda V}\leq -R_2 \lambda^\gamma$ for all $\lambda\ge 1$, thus using \eqref{eq:13thesis} with $\lambda=K_2 \gep^{-\frac1{1-\gamma}}$ for $0<K_2\leq1$ to be chosen below and for $0<\gep\leq K_2^{\frac{2\bar{\alpha}}{1+2\bar{\alpha}}}$ (so that $\lambda\ge1$), we get 
\[\log\rp(V\leq \gep)\leq -\gep^{-\frac{\gamma}{1-\gamma}}(R_2K_2^\gamma- K_2).\]
Choosing $K_2\leq 1\wedge R_2^{\frac{1+2\ubar{\alpha}}{2\ubar{\alpha}}}$, we have that $R_2K_2^\gamma-K_2=C'>0$ for all $\alpha\in[\ubar{\alpha},\bar{\alpha}]$ and by replacing $\gep$ by $\gep^2$, we therefore obtain 
\begin{equation}\label{eq:uubd}\log \rp\big(V\leq \gep^2\big)\leq -C'\gep^{-\frac{2\gamma}{1-\gamma}}=-C'\gep^{-\frac1{\alpha}}, \quad \forall 0<\gep\leq K_2^{\frac{\bar{\alpha}}{1+2\bar{\alpha}}},\quad \forall\alpha\in[\ubar{\alpha},\bar{\alpha}],\end{equation}
where $C'>0$ is independent of $\alpha$.

Combined, \eqref{eq:uubd} and \eqref{eq:llbd} verify \eqref{2ndclaim} for $\gep\in \big(0,K_2^{\frac{\bar{\alpha}}{1+2\bar{\alpha}}}\big].$  Let $\gep_0>0$. Due to the positivity and the joint continuity of $\gep^{1/\alpha}\varphi_0(\gep;\alpha)$ for $(\gep, \alpha)\in \big[K_2^{\frac{\bar{\alpha}}{1+2\bar{\alpha}}}, \gep_0 \big]\times[\ubar{\alpha},\bar{\alpha}]$ (which follows from the continuity of $\varphi_0$ separately with respect to $\gep$ and $\alpha$ and its monotonicity properties, see \cite{KD69}), there exist constants $B,D>0$ such that \[B\leq-\gep^{1/\alpha}\log\rp\big(V\leq \gep^2\big)\leq D, \quad\forall \gep\in \big[K_2^{\frac{\bar{\alpha}}{1+2\bar{\alpha}}}, \gep_0\big], \quad\forall[\ubar{\alpha},\bar{\alpha}].\] The validity of \eqref{2ndclaim} is thus verified and the proof is complete.
\end{proof}
\end{document}